\numberwithin{equation}{section}
\theoremstyle{plain}
\newtheorem*{theorem*}{Theorem}
\newtheorem*{lemma*}{Lemma}
\newtheorem{theorem}{Theorem}
\newtheorem{lemma}{Lemma}[section]
\newtheorem{corollary}[lemma]{Corollary}
\newtheorem{proposition}[lemma]{Proposition}
\theoremstyle{definition}
\newtheorem{remark}[lemma]{Remark}
\def\l{\lambda}
\def\T{{\bf T}}
\newcommand{\bbN}{{\mathbb{N}}}
\newcommand{\eps}{\varepsilon}
\providecommand{\ip}[1]{\langle#1\rangle}
\providecommand{\abs}[1]{\left\lvert#1\right\rvert}
\providecommand{\norm}[1]{\left\|#1\right\|}
\def\eps{\varepsilon}
\def\e{{\rm e}}
\def\dd{{\rm d}}
\def\ddt{{\frac{\dd}{\dd t}}}
\def\R {\mathbb{R}}
\def \l {\langle}
\def \r {\rangle}
\def\T {{\mathbb T}}
\def\de{{\partial}}
\newcommand{\D}{\mathcal{D}}
\newcommand{\N}{\mathbb{N}}
\newcommand{\Lin}{\mathcal{L}}
\newcommand{\PP}{\mathbb{P}}
\begin{document}

\title{\vspace*{-1cm}Stationary Structures near the Kolmogorov and Poiseuille Flows in the 2d Euler Equations}
\author{Michele Coti Zelati\thanks{Department of Mathematics, Imperial College London, London, SW7 2AZ, UK, \nolinkurl{m.coti-zelati@imperial.ac.uk}}, Tarek M.\ Elgindi\thanks{Mathematics Department, Duke University,
Durham, NC 27708, USA, \nolinkurl{tarek.elgindi@duke.edu}}, Klaus Widmayer\thanks{Institute of Mathematics, EPFL, Station 8, 1015 Lausanne, Switzerland, \nolinkurl{klaus.widmayer@epfl.ch}}}

\date{\today}
\maketitle

\vspace*{-.9cm}
\begin{abstract}
We study the behavior of solutions to the incompressible $2d$ Euler equations near two canonical shear flows with critical points, the Kolmogorov and Poiseuille flows, with consequences for the associated Navier-Stokes problems.

We exhibit a large family of new, non-trivial stationary states of analytic regularity, that are arbitrarily close to the Kolmogorov flow on the square torus $\mathbb{T}^2$. This situation contrasts strongly with the setting of some monotone shear flows, such as the Couette flow: in both cases the linearized problem exhibits an ``inviscid damping'' mechanism that leads to relaxation of perturbations of the base flows back to nearby shear flows. While this effect persists nonlinearly for suitably small and regular perturbations of some monotone shear flows, for the Kolmogorov flow our result shows that this is not possible. 

Our construction of these stationary states builds on a degeneracy in the global structure of the Kolmogorov flow on $\mathbb{T}^2$. In this regard both the Kolmogorov flow on a rectangular torus and the Poiseuille flow in a channel are very different, and we show that the only stationary states near them must indeed be shears, even in relatively low regularity $H^3$ resp.\ $H^{5+}$.

In addition, we show that this behavior is mirrored closely in the related Navier-Stokes settings: the linearized problems near the Poiseuille and Kolmogorov flows both exhibit an enhanced rate of dissipation. Previous work by us and others shows that this effect survives in the full, nonlinear problem near the Poiseuille flow and near the Kolmogorov flow on rectangular tori, provided that the perturbations lie below a certain threshold. However, we show here that the corresponding result cannot hold near the Kolmogorov flow on $\T^2$.
\end{abstract}

\setcounter{tocdepth}{2}
{\small \tableofcontents}

\section{Introduction}
Solutions to the incompressible Euler equations are notoriously difficult to understand: they exhibit few conserved quantities, lack of compactness, chaotic behavior, and many other mathematical challenges. Toward gaining a deeper qualitative understanding of their long time dynamics, it is natural to first investigate possible ``end state'' configurations, such as stationary states or time-periodic solutions. Such structures can play a central role in the evolution of a flow, and can even become dominant aspects of it. Moreover, in some such settings there are clear links to associated dynamics in the Navier-Stokes equations.

This work is devoted to such questions in the setting of the $2d$ Euler ($\nu=0$) or Navier-Stokes ($\nu>0$) equations
\begin{align}\label{eq:NSEvel}
\begin{cases}
\de_t U+(U\cdot \nabla) U+\nabla P=\nu\Delta U,\\
\nabla\cdot U=0.
\end{cases}
\end{align}
For a domain $D\subset\R^2$ with suitable boundary conditions, these equations describe a flow through its velocity field $U=(U_1,U_2):D\times\R\to\R^2$, with $P:D\to\R$ being the internal pressure and $\nu\geq 0$ the kinematic viscosity (inversely proportional to the Reynolds number). In two dimensions it is advantageous to work instead with the scalar vorticity $\Omega:=\de_x U_2-\de_y U_1:D\times\R\to \R$, which satisfies
\begin{align}\label{eq:NSEvort}
\begin{cases}
 \de_t \Omega+U\cdot \nabla \Omega=\nu\Delta \Omega,\\
 U=\nabla^\perp \Psi, \quad \Delta\Psi=\Omega,
\end{cases}
\end{align}
and from which the so-called stream function $\Psi:D\times\R\to\R$ and the (divergence-free) velocity field $U$ of the flow can be recovered as described in \eqref{eq:NSEvort}. In this work we will be dealing chiefly with bounded, rectangular domains with (partially) periodic or Dirichlet boundary conditions, such as the square torus $\T^2$, a rectangular torus $\T^2_\delta:=[0,2\pi \delta]\times[0,2\pi]$, $\delta>0$ with $\delta\not\in\N$, or a channel $\T\times I$, where $I\subset\R$ is an interval.

While these problems are globally well-posed for sufficiently regular initial data, the long time behavior of their solutions is very hard to understand, especially in the case of the Euler equations.

\paragraph{Stationary States.}
A particularly important class of solutions to the Euler equations \eqref{eq:NSEvort} (with $\nu=0$) is given by stationary states, i.e.\ time-independent flow configurations. Their stream functions satisfy the equation
\begin{equation}
 \nabla^\perp\Psi\cdot\nabla\Delta\Psi=0,
\end{equation}
which holds in particular for solutions of the equation $\Delta\Psi=F(\Psi)$, for some $F\in C^1$. Two canonical solutions of this type are \emph{shear flows}\footnote{In a broader context and for more general geometries, such flows are also referred to as \emph{laminar flows}.}, where $\Psi$ depends on only one of the two spatial variables, and \emph{eigenfunctions of the Laplacian}.

Since the foundational investigations of Kelvin \cite{Kelvin87} and Reynolds \cite{Reynolds83} in the 1880's, shear flows have been important in both fluid dynamics theory and applications, and are commonly viewed as the natural state of a fluid in non-turbulent situations. On the other hand, eigenfunctions of the Laplacian are of interest, since the first non-trivial eigenfunctions maximize $\norm{U}_{L^2}$ for fixed $\norm{\Omega}_{L^2}$, which gives a natural stability mechanism. 
Due to the additional presence of viscosity, all solutions to the $2d$ Navier-Stokes equations \eqref{eq:NSEvort} (with $\nu>0$) are damped and eventually tend to zero. 
However, as we shall see later, in some cases there are close connections between stationary states of the Euler equations and certain special solutions of the Navier-Stokes equations.

A fundamental question in all these settings is how solutions near such stationary states behave, and in particular whether they are stable in a suitable sense, which has to be very carefully defined. Historically and until now, a natural starting point  
has been the investigation of ``modal stability'', i.e.\ the stability properties of the linearization near a given stationary state. This has uncovered two crucial effects due to vorticity mixing: in the Euler equations, so-called \emph{inviscid damping} is a mechanism that leads to damping of a component of the velocity \cite{BGM19}, whereas in the Navier-Stokes equations \emph{enhanced dissipation} produces an effective relaxation rate that is much faster than the natural diffusive one \cite{CKRZ08,CZDE18}.
These questions have received a lot of attention recently and have seen an enormous amount of progress, in particular for the case of shear flows \cite{BCZ17,GNRS20,LM19,WZ19,WZZ20,SMM19,CZEW19,Zillinger16} and vortices \cite{Gallay18,LWZ17}. In this context, the classic example is that of the Couette flow $U_\ast(y)=(y,0)$ on $\T\times\R$, which solves both Euler and Navier-Stokes equations, and was already investigated by Kelvin \cite{Kelvin87}. The linearized problem near $U_\ast$ can be solved explicitly, and demonstrates clearly the mixing effects mentioned above -- see also the review paper \cite{BGM19} and references therein. 

The associated nonlinear problems, however, are substantially harder.  In the inviscid case ($\nu=0$), nonlinear asymptotic stability remained unresolved until the groundbreaking work of Bedrossian and Masmoudi \cite{BM15} for the Couette flow. They established that sufficiently regular and small perturbations converge strongly in $L^2$ (of velocity) to a shear flow \emph{near} $U_*$ as $t\rightarrow\infty$. As was shown in \cite{DM18}, the Gevrey regularity here is a crucial ingredient of the proof. The work \cite{BM15} has led to many subsequent results. The only nonlinear results on the $2d$ Euler equations that we are aware of are \cite{MZ20,IJ20,IJ19}, where the method of \cite{BM15} is extended (in a highly non-trivial way) to handle the case of \emph{monotone} shear flows on $\T\times [a,b]$. 
When $\nu>0$ experimental predictions and simulations for the Navier-Stokes equations near $U_\ast$ were confirmed mathematically: it was shown that there exists a certain threshold for the size of the initial data, below which enhanced dissipation also holds in the nonlinear viscous problem near $U_\ast$ \cite{BVW18,CLWZ18,BMV16}, provided the Reynolds number is large enough. 

In the present work we venture into unexplored directions where the natural analogues and generalizations of the aforementioned results \emph{do not} hold. In fact, we show that the basic picture of viewing the nonlinear problem as a suitable perturbation of the corresponding linear setting can break down completely.

\subsection{Main Results}
The mixing mechanism upon which the above works are based, discovered first by Orr \cite{Orr07}, relies heavily on the monotonicity of the base profile $U_\ast$. 
Once one leaves the realm of monotonic flows, two canonical flows come to mind: the \emph{Kolmogorov} and \emph{Poiseuille} flows $U_K$ and $U_P$, respectively, given by
\begin{equation}
\begin{alignedat}{2}
 &U_{K}(y)=(\sin(y),0),\qquad &&(x,y)\in \T^2 \textnormal{ or }(x,y)\in \T^2_\delta,\\
 &U_{P}(y)=(y^2,0),\qquad &&(x,y)\in \T\times I,
\end{alignedat}
\end{equation}
where $I\subset\R$ is an interval. Both are stationary solutions to the Euler equations, and $U_P$ also solves the full Navier-Stokes equations, whereas $U_K$ evolves as a so-called \emph{bar state} $U_{bar}(t,y):=\e^{-\nu t}U_K(y)$ when $\nu>0$.

In comparison with the Couette flow, they are both \emph{locally degenerate} in the sense that they have a critical point. While the two share this similarity, it turns out that $U_K$ on $\T^2$ also has a sort of \emph{global degeneracy}. This is closely tied to the setting of the square torus $\T^2$ (rather than a rectangular torus $\T^2_\delta$) and, as we show in our main results below, makes for a crucial difference: while the behavior of solutions near $U_P$ on $\T\times I$ or $U_K$ on $\T^2_\delta$ may have similarities to that near the Couette flow as in \cite{LZ11}, the situation near $U_K$ \emph{on the square torus} $\T^2$ is entirely different.
In concrete terms, the degeneracy of the global structure of $U_K$ on $\T^2$ implies that the linearized operator $\Lin_K$ near $U_K$ has a ``large'' kernel, which includes not only shears (as is natural for linearized operators near shear flows), but also two eigenfunctions of the Laplacian. This is a well-known fact, but still allows for linear inviscid damping and linear enhanced dissipation results \cite{WZ19,WZZ20,SMM19}, which demonstrate these effects in a precise, quantified fashion \emph{away from the kernel} of $\Lin_K$. 
However, the present work shows that these effects do not persist in the nonlinear problem. 

\paragraph{In the Euler equations.}\label{ssec:results-E}
Building on the global degeneracy of $U_K$ on $\T^2$, we construct a large family of new, non-trivial stationary states of analytic regularity, that are arbitrarily close to $U_K$ and do \emph{not} lie in the kernel of the linearized operator $\Lin_K:=\sin(y)(1+\Delta^{-1})\de_x$. Our result constructs the corresponding stream functions as perturbations of the stream function $\cos(y)$ of the Kolmogorov flow $U_K$.
\begin{theorem}[Stationary states near Kolmogorov]\label{thm:main_Kolmo}
 There exists $\eps_0>0$ such that for any $0<\eps\leq\eps_0$ there exist analytic functions $\Psi_\eps\in C^\omega(\T^2)$ and $F_\eps\in C^\omega(\R)$ satisfying
 \begin{equation}\label{eq:stat_sol}
  \Delta\Psi_\eps=F_\eps(\Psi_\eps)
 \end{equation}
 and\footnote{for the precise Gevrey-$1$ regularity statement see Proposition \ref{prop:elliptic}}
 \begin{equation}
  \norm{\cos(y)-\Psi_\eps}_{C^\omega(\T^2)}= O(\eps),
 \end{equation}
 with
 \begin{equation}\label{eq:stat_sol_nontriv}
  \ip{\Psi_\eps,\cos(x)\cos(4y)}=-\eps^2\frac{\pi^2}{128}+O(\eps^3).
 \end{equation}
 This shows that, \emph{arbitrarily close to the Kolmogorov flow}, there are families of non-trivial (i.e.\ not in the kernel of $\Lin_K$), non-shear and stationary solutions $U_\eps:=\nabla^\perp\Psi_\eps:\T^2\to\R^2$ of the incompressible Euler equations.
\end{theorem}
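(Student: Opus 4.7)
The plan is to obtain $(\Psi_\eps, F_\eps)$ by a bifurcation argument from the trivial solution $(\cos y, -\mathrm{id})$ of the semilinear elliptic equation $\Delta\Psi = F(\Psi)$, exploiting the global degeneracy of $U_K$ on $\T^2$: since on the square torus $-\Delta$ has a four-dimensional eigenspace for the eigenvalue $1$,
\begin{equation}
\mathcal{K} := \ker(\Delta+1) = \mathrm{span}\{\cos x,\sin x, \cos y, \sin y\},
\end{equation}
the linearization $\Delta+1$ of the equation at $\Psi_0 = \cos y$ admits \emph{non-shear} kernel directions, opening up a branch of non-trivial solutions once $F$ is allowed to vary with $\eps$.

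Concretely, I would make the formal Lyapunov--Schmidt ansatz
\begin{equation}
\Psi_\eps = \cos y + \sum_{k\geq 1}\eps^k\phi_k,\qquad F_\eps(z) = -z + \sum_{k\geq 1}\eps^k P_k(z),
\end{equation}
with $P_k$ polynomials to be determined, and match powers of $\eps$ in $\Delta\Psi_\eps = F_\eps(\Psi_\eps)$. At order $k$ this yields $(\Delta+1)\phi_k = P_k(\cos y) + R_k$, where $R_k$ collects already-determined nonlinear interactions. Setting $\phi_1 = \cos x$ selects a bifurcation direction transverse to the shears. At each subsequent order, Fredholm solvability requires the right-hand side to be $L^2(\T^2)$-orthogonal to $\mathcal{K}$: obstructions along $\cos y$ are removed by tuning the linear and constant terms of $P_k$ (obstructions along $\sin y$ vanish automatically since $P_k(\cos y)$ is even in $y$), while those along $\cos x, \sin x$ are absorbed into the kernel components of the earlier $\phi_j$. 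The identity \eqref{eq:stat_sol_nontriv} is then a concrete second-order computation: with e.g.\ $P_1(z) = \tfrac{1}{5}z^5 - \tfrac{1}{8}z$, the order-$\eps$ solvability condition is met (the $\cos y$ component of $P_1(\cos y)$ vanishes by direct calculation), and $P_1'(\cos y) = \cos^4 y - \tfrac{1}{8} = \tfrac{1}{4} + \tfrac{1}{2}\cos 2y + \tfrac{1}{8}\cos 4y$ contributes a $\tfrac{1}{8}\cos x\cos 4y$ term to $R_2 = P_1'(\cos y)\phi_1$; inverting with $(\Delta+1)(\cos x\cos 4y) = -16\cos x\cos 4y$ places the coefficient $-\tfrac{1}{128}$ on this mode in $\phi_2$, which is precisely \eqref{eq:stat_sol_nontriv}.

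The main obstacle is upgrading this formal expansion to an actual analytic (Gevrey-$1$) solution for sufficiently small $\eps$. The natural strategy is to work in an analytic Banach scale, for instance
\begin{equation}
X_\rho := \Big\{f:\T^2\to\R \,\Big|\, \|f\|_\rho := \sum_{\mathbf{n}\in\Z^2}|\widehat f(\mathbf{n})|e^{\rho|\mathbf{n}|} < \infty\Big\},
\end{equation}
and to prove inductive bounds $\|\phi_k\|_\rho \leq C M^k$ by a Cauchy/majorant argument. Crucially, on $\T^2$ there are no small-divisor issues: the spectrum of $\Delta+1$ on $\mathcal{K}^\perp$ consists of non-zero integers, so $(\Delta+1)^{-1}$ is bounded on $X_\rho\cap\mathcal{K}^\perp$ with norm $\leq 1$. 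Summing then yields $\Psi_\eps \in X_\rho$ for $|\eps|\leq\eps_0$ sufficiently small. Alternatively, one may close a Lyapunov--Schmidt reduction at a fixed high Sobolev regularity via an implicit function theorem and then upgrade to real-analyticity using standard elliptic regularity for analytic semilinear equations, as recorded in Proposition~\ref{prop:elliptic}. The non-triviality claim is then immediate: $\ker\Lin_K$ consists of shears together with $\mathrm{span}\{\cos x,\sin x\}$, none of which has a $\cos x\cos 4y$ Fourier mode, so \eqref{eq:stat_sol_nontriv} places $\Psi_\eps$ strictly outside $\ker\Lin_K$.
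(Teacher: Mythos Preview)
Your overall strategy—Lyapunov--Schmidt reduction exploiting $\ker(\Delta+1)=\mathcal{K}$, then either a majorant argument in an analytic scale or an implicit function theorem plus elliptic regularity—is sound and close in spirit to the paper. The paper implements it as a single contraction in $H^2$: it solves $(\Delta+1)\psi = f(\cos y + \eps\cos x + \eps\psi)$ for $\psi\perp\mathcal{K}$ with $f(s) = A s + B s^3 + \tfrac{1}{5}s^5$, where the \emph{two} coefficients $A=A(\psi;\eps)$, $B=B(\psi;\eps)$ are determined as functionals of $\psi$ precisely so that the right-hand side is orthogonal to both $\cos y$ and $\cos x$; elliptic regularity (your second alternative) then upgrades to analyticity.

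Your concrete choice $P_1(z)=\tfrac{1}{5}z^5-\tfrac{1}{8}z$, however, has a real gap. At order $\eps^2$ one must solve $(\Delta+1)\phi_2 = P_1'(\cos y)\phi_1 + P_2(\cos y)$, and solvability requires the right-hand side to be orthogonal to $\cos x$. Since $P_2(\cos y)$ is a shear it contributes nothing, while with $\phi_1=\cos x + (\text{shear})$ the $\cos x$-obstruction equals the $y$-mean of $P_1'(\cos y)=\cos^4 y-\tfrac{1}{8}$, namely $\tfrac{3}{8}-\tfrac{1}{8}=\tfrac{1}{4}\neq 0$. Your proposed remedy—absorbing into kernel components of earlier $\phi_j$—does not help here: rescaling the $\cos x$-part of $\phi_1$ only rescales this obstruction, forcing the coefficient to zero and killing the non-shear branch. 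The fix is exactly what the paper does: include a cubic term $bz^3$ in $P_1$, so that two coefficients are available. The two conditions then give $a=\tfrac{1}{8}$, $b=-\tfrac{1}{3}$, and with these one finds $P_1'(\cos y)=\tfrac{1}{8}\cos 4y$ exactly. Your numerical coefficient $-\tfrac{1}{128}$ happens to survive because that mode depends only on the quintic term—but as written, your expansion is not solvable at order two.
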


This is all the more remarkable, since as initial data in the linearized, inviscid problem, these solutions would experience the inviscid damping effect as in \cite{WZZ20}, but are in fact simply stationary for the full Euler equations! Since moreover our stationary states are analytic, this shows that the linear inviscid damping results of \cite{WZZ20} cannot be extended in a perturbative spirit to the nonlinear setting, \emph{no matter the regularity}. This is in striking contrast with the case of the Couette flow \cite{LZ11}: there, similar stationary structures can only exist at low regularity ($H^{7/2}$ for the stream function) and as discussed earlier, nonlinear inviscid damping holds at sufficiently high regularity.

\begin{proof}[About the proof of Theorem \ref{thm:main_Kolmo} (full details in Section \ref{sec:Kolmo})]
 Our construction builds perturbatively on the fact that the stream function $\Psi_K:=\cos(y)$ of the Kolmogorov flow $U_K$ satisfies
 \begin{equation}\label{eq:psiKolmo}
  \Delta \Psi_K=F_K(\Psi_K),\qquad F_K(z)=-z.
 \end{equation}
 To find a larger class of solutions to \eqref{eq:stat_sol}, we make the ansatz 
 \begin{equation}
  \Psi_\eps=\Psi_K+\eps\psi,\qquad F_\eps=F_K+\eps f,
 \end{equation}
 which yields a nonlinear elliptic equation for $\psi$, with $f$ to be determined as well,
  \begin{equation}\label{eq:psi_elliptic_general}
  \Delta \psi+\psi=f(\Psi_K+\eps\psi).
 \end{equation}
 
 Notice that here a crucial difference with previous works \cite{CS12,CDG20} is that the operator $\Delta+1$ on the left hand side of \eqref{eq:psi_elliptic_general} is not invertible.
 This global degeneracy thus leads to some complications, but also allows us to introduce here via $\psi$ elements of the kernel $\ker\Lin_K$. Via the nonlinear interaction, this produces a plethora of different modes, and in particular allows for a construction of $\Psi_\eps$ such that the resulting flow is not inside the kernel of the linearization $\Lin_K$.
 
 At a more technical level, our proof constructs in tandem both the solution $\psi$ and the nonlinearity $f$ via a contraction argument. This is first done for $\psi\in H^2(\T^2)$, and it turns out that a simple choice for $f$ works well: that of an odd, real quintic polynomial (the coefficients of which are part of the contraction argument).\footnote{Other choices of $f$ are certainly possible, and one sees easily that in fact we may construct many different families of solutions to \eqref{eq:stat_sol} -- see also Remark \ref{rem:KolmoH2} in Section \ref{sec:Kolmo}.}
Given the relatively explicit nature of our construction, one can then easily find an expansion of $\Psi_\eps$, from which \eqref{eq:stat_sol_nontriv} follows directly.
 
 Finally, the analytic regularity can be deduced from \eqref{eq:psi_elliptic_general} via an elliptic regularity argument, which we detail in Section \ref{ssec:elliptic}. This also yields uniform in $\eps>0$ analytic Gevrey-$1$ norm bounds.
\end{proof}

\begin{remark}
\begin{enumerate}
 \item As this proof shows, a relatively simple form of $F_\eps$ as an odd, quintic polynomial suffices. Moreover, one can easily modify our arguments to show that many families $(\Psi_\eps)_\eps$ as in Theorem \ref{thm:main_Kolmo} exist -- see also Remark \ref{rem:KolmoH2} in Section \ref{sec:Kolmo} below.
 \item Theorem \ref{thm:main_Kolmo} also implies the existence of stationary states near the Kolmogorov flow on general tori $\T^2_N:=[0,2\pi N]\times[0,2\pi]$, $N\in\N$, with integer side length ratio, since we may simply embed $N$ copies of $\T^2$ in such a torus $\T^2_N$.
\end{enumerate}
\end{remark}

From the discussion so far one may be tempted to conjecture that one could find stationary states of the $2d$ Euler equations on $\T^2$ near Kolmogorov, which depart in any direction in the kernel of the linearization $\Lin_K$. However, we establish that this is not the case: we show in Section \ref{ssec:Kolmo_obstruct} that there are elements of $\ker\Lin_K$ which cannot arise as projections of stationary states.
\begin{proposition}\label{prop:main_someflex}
 There exists an infinite-dimensional linear subspace $Y\subset\ker\Lin_K$ such that if a solution $\Omega$ to the $2d$ Euler equations satisfies that $\norm{\Omega-\cos(y)}_{H^6}$ is sufficiently small and its projection $\PP_K(\Omega-\cos(y))$ onto $\ker\Lin_K$ satisfies $\PP_K(\Omega-\cos(y))\in Y$, then $\Omega$ cannot be stationary.
\end{proposition}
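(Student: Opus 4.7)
The plan is to extract solvability obstructions from the explicit factor $\sin y$ in $\Lin_K = \sin(y)(1+\Delta^{-1})\partial_x$: this factor vanishes at $y\in\{0,\pi\}$ and forces trace-vanishing conditions on any smooth stationary perturbation, which in turn yield leading-order quadratic obstructions on the kernel projection.

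Writing the stationary Euler equation for $\Omega = \cos y + \omega$, $\psi := \Delta^{-1}\omega$, in the form $\Lin_K\omega + \{\psi,\omega\} = 0$ with $\{f,g\} := \nabla^\perp f \cdot \nabla g$, the left-hand side vanishes identically at $y=0,\pi$. Smoothness of $\omega$ therefore forces
\[ \{\psi,\omega\}(x,0) = \{\psi,\omega\}(x,\pi) = 0,\qquad \forall\, x\in\T. \]
I would next set up a Lyapunov--Schmidt reduction, decomposing $\omega = \omega_k + \omega_r$ with $\omega_k = \PP_K\omega \in \ker\Lin_K = \{h(y)+a\cos x + b\sin x\}$; a careful inversion of $\Lin_K$ on its range (controlling the division by $\sin y$) would give $\omega_r$ as a quadratic function of $\omega_k$, with $\|\omega_r\|_{H^4} \lesssim \|\omega_k\|_{H^6}^2$. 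Writing the kernel element as $\omega_k = h(y)+a\cos x + b\sin x$ and $\psi_k = H(y) - a\cos x - b\sin x$ with $H''=h$, a direct computation gives
\[ \{\psi_k,\omega_k\}(x,y) = \bigl(H'(y)+h'(y)\bigr)\bigl(b\cos x - a\sin x\bigr), \]
reducing the trace obstructions to the leading-order bilinear conditions $(H'(0)+h'(0))(a,b) = 0$ and $(H'(\pi)+h'(\pi))(a,b) = 0$.

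For the infinite-dimensional subspace $Y$ I would take the graph
\[ Y := \bigl\{h(y) + L_0(h)\cos x + L_\pi(h)\sin x : h \in H^6_0(\T)\bigr\},\qquad L_0(h):=H'(0)+h'(0),\ L_\pi(h):=H'(\pi)+h'(\pi), \]
with $H_0^6(\T)$ the mean-zero subspace. This is a linear subspace of $\ker\Lin_K$, infinite-dimensional through its parameter $h$. For any $\omega_k\in Y$ that is not a pure shear one has $(L_0(h),L_\pi(h))\neq 0$, so at least one of the leading quadratic obstructions $L_0(h)(a,b) = (L_0(h)^2, L_0(h)L_\pi(h))$ or $L_\pi(h)(a,b) = (L_0(h)L_\pi(h), L_\pi(h)^2)$ contains a strictly nonzero component of size $\|\omega_k\|^2$, which cannot be absorbed by the $O(\|\omega_k\|^3)$ contribution of $\omega_r$ once $\|\omega_k\|_{H^6}$ is small enough. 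Hence the trace conditions fail and no such stationary $\Omega$ can exist. (The pure-shear elements of $Y$, where $L_0=L_\pi=0$, correspond only to the trivial stationary shears $\cos y + h(y)$ that the statement excludes.)

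The main obstacle is the quantitative Lyapunov--Schmidt step: constructing a bounded right-inverse for $\Lin_K$ on the orthogonal complement of its kernel, carefully tracking the derivative loss from dividing by $\sin y$, and closing a contraction for $\omega_r$ in a Sobolev space sufficiently high that the pointwise traces of $\{\psi,\omega\}$ on $\{y=0\}$ and $\{y=\pi\}$ are controlled. The $H^6$ regularity in the hypothesis is calibrated precisely so that these trace maps are bounded, the inversion of $\Lin_K$ on its range closes, and the $O(\|\omega_k\|^2)$ leading obstruction strictly dominates the $O(\|\omega_k\|^3)$ remainder.
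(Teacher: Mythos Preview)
Your core mechanism---extracting a pointwise obstruction from the vanishing of $\sin y$ at $y\in\{0,\pi\}$, computing the quadratic self-interaction $\{\psi_k,\omega_k\}$ of the kernel projection, and showing it cannot be cancelled by the remainder---is exactly the paper's idea. The paper simply evaluates at the single point $(\pi/2,0)$ rather than taking full traces, and for the kernel direction uses the concrete family $a(\sin(\ell y)+\cos x)$, $\ell\geq 2$, whose self-interaction produces the explicit term $\tfrac{\ell^2-1}{\ell}a^2\sin x\cos(\ell y)$, nonzero at $(\pi/2,0)$.

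Two points where your plan diverges, one a simplification you are missing and one a genuine gap:

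\emph{Simplification.} A full Lyapunov--Schmidt inversion of $\Lin_K$ (with the attendant division by $\sin y$) is not needed, and is precisely the step you flag as the main obstacle. The paper instead uses the elementary identity $\|\partial_y(\sin(y)g)\|_{L^2}\geq\tfrac12\|g\|_{L^2}$ together with the lower bound for $(1+\Delta^{-1})\partial_x$ on $(\ker\Lin_K)^\perp$ to get the a priori coercivity $\|\omega_r\|_{L^2}\lesssim\|\Lin_K\omega_r\|_{H^1}=\|u\cdot\nabla\omega\|_{H^1}$, then interpolates with the $H^6$ smallness to control $\omega_r$ in $H^3\hookrightarrow W^{1,\infty}$. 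No inversion, no division by $\sin y$.

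\emph{Gap in your choice of $Y$.} Your graph subspace contains pure shears: for $h=\cos(\ell y)$ one has $h'(0)=h'(\pi)=H'(0)=H'(\pi)=0$, hence $L_0(h)=L_\pi(h)=0$ and $h(y)\in Y$; but $\cos y+h(y)$ is a stationary shear with $\PP_K(\Omega-\cos y)=h\in Y$, directly violating the conclusion. The proposition as stated does not exclude these, so your $Y$ does not work. More subtly, even away from exact shears your obstruction has size $\sim L_0^2+L_\pi^2$, not $\sim\|\omega_k\|^2$ as you claim; along directions in $Y$ with $|(L_0,L_\pi)|/\|h\|\to 0$ the remainder (which scales with $\|h\|$) can swamp it, so no uniform $\eps_0$ works over all of your $Y$. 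The paper sidesteps this by fixing directions $\alpha\sin(\ell y)+\beta\cos x$ with both components nonzero and of comparable size, tying the obstruction $\sim\alpha\beta$ to the overall scale.
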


Highlighting the role of the global (versus local) degeneracy, we show that a similar construction of stationary states as in our Theorem \ref{thm:main_Kolmo} is \emph{not} possible near the Kolmogorov flow on a rectangular torus or near the Poiseuille flow in a channel. In fact, we show that all nearby stationary states must simply be shear flows, even in relatively low regularity $H^3$ and $H^{5+}$, respectively.
\begin{theorem}[Rigidity near Kolmogorov on a rectangular torus]\label{thm:main_rigid_intro}
 Consider the stationary solution $U_K(x,y)=(\sin(y),0)$ on $\T^2_{\delta}$, $\delta>0$ with $\delta\not\in\N$, of the Euler equations \eqref{eq:NSEvel}. There exists $\eps_0>0$ (depending on $\delta$) such that if $U:\T^2_\delta\to\R^2$ is a further stationary solution to the Euler equations with 
 \begin{equation}
  \norm{U-U_K}_{H^3}\leq \eps_0,
 \end{equation}
 then $U=U(y)$ is necessarily a shear flow.
\end{theorem}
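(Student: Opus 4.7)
The plan is to derive the nonlinear equation satisfied by the perturbation $\psi := \Psi - \cos y$ and exploit the non-resonance condition $\delta \notin \N$ to force $\partial_x \psi \equiv 0$. Writing $\Psi = \cos y + \psi$ with $\psi \in H^4(\T^2_\delta)$ (elliptic regularity from $U \in H^3$), a direct expansion of $\{\Psi, \Delta \Psi\} = 0$ with the Poisson bracket $\{f, g\} := \partial_x f\, \partial_y g - \partial_y f\, \partial_x g$ yields
\[
\sin y \, \partial_x (\Delta + 1) \psi + \{\psi, \Delta \psi\} = 0.
\]
Decomposing $\psi = \psi_0(y) + \psi_{\neq 0}(x, y)$ into $x$-mean and zero-$x$-mean parts and noting that $\{\psi_0, \Delta \psi_0\} \equiv 0$, projecting onto non-zero $x$-modes gives $\sin y \, (\Delta + 1) \partial_x \psi_{\neq 0} = -\{\psi, \Delta \psi\}_{\neq 0}$. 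The goal reduces to showing $\psi_{\neq 0} \equiv 0$.

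Expanding $\psi_{\neq 0} = \sum_{k \neq 0} e^{ikx/\delta} \psi_k(y)$, each mode satisfies $\sin y \cdot M_k \psi_k = (i\delta/k) [\{\psi, \Delta \psi\}]_k$ with $M_k := \partial_y^2 + (1 - (k/\delta)^2)$. On $L^2(\T_y)$ the spectrum of $M_k$ is $\{1 - (k/\delta)^2 - n^2 : n \in \Z\}$, and $\delta \notin \N$ ensures no integer $n$ satisfies $n^2 = 1 - (k/\delta)^2$ when $k \neq 0$. Hence $M_k$ is invertible with $\|M_k^{-1}\|_{L^2 \to L^2} \leq C(\delta)$ uniformly in $k$ (decaying like $k^{-2}$ for $|k|$ large). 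This is the linear rigidity that separates $\T^2_\delta$ from $\T^2$, where the modes $k = \pm 1, n = 0$ would provide extra kernel directions for $\Delta + 1$.

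The main obstacle is the degenerate weight $\sin y$ on the left-hand side. The key observation to handle it is that evaluating the full equation at $y = 0$ and $y = \pi$ forces the compatibility
\[
\{\psi, \Delta \psi\}(x, 0) = \{\psi, \Delta \psi\}(x, \pi) = 0 \quad \forall x,
\]
so the right-hand side vanishes precisely where $\sin y$ does. A Hardy-type inequality in $y$ then bounds $\{\psi, \Delta \psi\}/\sin y$ in $L^2$ by the $H^1$-norm of $\{\psi, \Delta \psi\}$. Combined with the two-derivative gain from $M_k^{-1}$, this produces an elliptic estimate of the schematic form
\[
\|\psi_{\neq 0}\|_{H^s} \leq C(\delta)\, \|\{\psi, \Delta \psi\}_{\neq 0}\|_{H^{s-1}}
\]
at a Sobolev level compatible with $\psi \in H^4$.

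Because $\{\psi_0, \Delta \psi_0\} \equiv 0$, every term of $\{\psi, \Delta \psi\}_{\neq 0}$ carries at least one factor of $\psi_{\neq 0}$, with coefficients controlled by $\|\psi\|_{H^3}$, so Sobolev product estimates on $\T^2_\delta$ yield
\[
\|\psi_{\neq 0}\|_{H^s} \leq C(\delta)\, \|\psi\|_{H^3}\, \|\psi_{\neq 0}\|_{H^s} \leq C(\delta)\, \eps_0\, \|\psi_{\neq 0}\|_{H^s}.
\]
Choosing $\eps_0$ so that $C(\delta)\eps_0 < 1$ absorbs the right-hand side and forces $\psi_{\neq 0} \equiv 0$, hence $\Psi = \Psi(y)$ and $U$ is a shear. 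The hardest step is balancing the derivative budget: the Hardy weight costs one $y$-derivative while $M_k^{-1}$ recovers two, so reaching a \emph{diagonal} estimate at precisely the $H^3$-velocity threshold will require either an elliptic bootstrap to higher regularity (in the spirit of Proposition \ref{prop:elliptic}) or a weighted Sobolev framework tailored to the vanishing of $\sin y$ at $y = 0, \pi$.
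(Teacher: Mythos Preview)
Your setup is correct and in fact coincides with the paper's: writing $\omega=\Delta\psi$ one has $(\Delta+1)\partial_x\psi=(1+\Delta^{-1})\partial_x\omega$ and $\{\psi,\Delta\psi\}=u\cdot\nabla\omega$, so your equation $\sin y\,(\Delta+1)\partial_x\psi_{\neq 0}=-\{\psi,\Delta\psi\}$ is exactly the paper's $\sin y\,(1+\Delta_\delta^{-1})\partial_x\omega=-u\cdot\nabla\omega$. You also correctly identify the spectral gap for $\Delta+1$ on nonzero $x$-modes when $\delta\notin\N$.

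The gap is in how you extract coercivity from the $\sin y$ weight. Your route---divide by $\sin y$ via Hardy (costing one $y$-derivative), then invert $\Delta+1$ (gaining two)---yields $\|\partial_x\psi_{\neq 0}\|_{H^2}\lesssim\|\{\psi,\Delta\psi\}\|_{H^1}$. But the $H^1$ norm of the Poisson bracket contains the term $\psi_0'\,\partial_y\partial_x\Delta\psi_{\neq 0}$, which forces $\|\partial_x\psi_{\neq 0}\|_{H^3}$ on the right. This one-derivative deficit is structural and does not disappear by raising $s$: at every level the Hardy step costs a $\partial_y$ that cannot be recaptured by crude product estimates. Your proposed remedies do not close it either: the bootstrap of Proposition~\ref{prop:elliptic} applies to solutions of $\Delta\Psi=F(\Psi)$, which a general stationary Euler stream function is \emph{not} assumed to satisfy, so you cannot upgrade regularity for free; and ``weighted Sobolev'' is not a concrete fix here.

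The paper avoids the loss by never dividing by $\sin y$. It uses the algebraic identity
\[
\int \partial_y(\sin y\, f)\cos y\, f=\tfrac12\int f^2,\qquad f:=(1+\Delta_\delta^{-1})\partial_x\omega,
\]
to obtain $\|f\|_{L^2}^2$ directly, and then estimates the \emph{bilinear} integral $\int \partial_y(u\cdot\nabla\omega)\cos y\, f$ term by term, integrating by parts so that every contribution is bounded by $C\|\omega\|_{H^2}\|\partial_x\omega\|_{L^2}^2$. Keeping $f$ inside the integral (rather than Cauchy--Schwarz'ing to $\|N\|_{H^1}$) is precisely what lets the derivatives balance and the argument close at $H^3$.
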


Note that this rigidity does not only hold in the range $0<\delta<1$, but extends even for tori $\T^2_\delta$ with $\delta>1$, as long as $\delta\not\in\N$. This is remarkable, as in those settings the Kolmogorov flow has been proven to be linearly unstable \cite{MS61,FH98,FSV97}. 

\begin{proof}[About the proof of Theorem \ref{thm:main_rigid_intro} (full details in Section \ref{ssec:Kolmo_rigid})]
 Our proof builds on the fact that (in contrast to the setting on $\T^2$) the linearization $\Lin_K$ near $U_K$ on $\T^2_\delta$ only has shears in its kernel. From this we derive a coercivity estimate for nearby solutions, that allows them to only be shears, provided they are sufficiently close to $U_K$.
\end{proof}

In the setting of the Poiseuille flow, we demonstrate the stronger result that even any nearby travelling wave solution must simply be a shear flow.
\begin{theorem}[Rigidity near Poiseuille]\label{thm:main_Pois}
Let $s>5$, and consider the $2d$ Euler equations on $\T\times[-1,1]$
\begin{equation}\label{eq:Echannel}
 \de_t U+U\cdot\nabla U+\nabla P=0,\qquad \nabla\cdot U=0,\qquad U_2(x,\pm 1)=0.
\end{equation}
There exists $\eps_0>0$ such that if $U(x-ct,y)$, with $c\in \R$, is any traveling wave solution to \eqref{eq:Echannel} that satisfies 
\begin{align}\label{eq:poisClose}
 \norm{\Omega+2y}_{H^s}\leq\eps_0, \quad \textnormal{where }U=\nabla^\perp \Psi, \quad \Delta \Psi =\Omega,
\end{align}
then it follows that $U\equiv(U_1,0)$, that is, $U$ is necessarily a shear flow.
\end{theorem}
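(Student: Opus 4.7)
The plan is to move into the co-travelling frame, where the problem reduces to a steady transport identity, then to Fourier-decompose the perturbation in $x$ and reduce matters to inverting the Rayleigh operator associated to the Poiseuille profile on each non-trivial mode. A bootstrap driven by \eqref{eq:poisClose} will then force every non-zero Fourier mode to vanish.

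Concretely, if $U(x-ct,y)=\nabla^\perp\Psi(x-ct,y)$ is a travelling-wave solution of \eqref{eq:Echannel}, then the co-moving stream function $\tilde\Psi(x,y):=\Psi(x,y)+cy$ satisfies $\nabla^\perp\tilde\Psi\cdot\nabla\Omega=0$. Writing $\tilde\Psi=-\tfrac{y^{3}}{3}+cy+\phi$ and $\Omega=-2y+\omega$ with $\omega=\Delta\phi$, a direct computation yields
\[
(y^{2}-c)\,\de_{x}\omega-2\,\de_{x}\phi\;=\;-\{\phi,\omega\}, \qquad \{f,g\}:=\de_{x}f\,\de_{y}g-\de_{y}f\,\de_{x}g,
\]
with $\phi_{k}(\pm 1)=0$ for every $k\neq 0$ (from $U_{2}(x,\pm 1)=\de_{x}\tilde\Psi(x,\pm 1)=0$) and $\|\phi\|_{H^{s+1}}\lesssim\eps_{0}$ by \eqref{eq:poisClose} and Biot--Savart. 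Expanding $\phi=\phi_{0}(y)+\sum_{k\neq 0}\phi_{k}(y)e^{ikx}$, projecting onto the $k$-th mode $(k\neq 0)$, and dividing by $ik$ produces the inhomogeneous Rayleigh equation
\[
\mathcal{R}_{c,k}\phi_{k}\;:=\;(y^{2}-c)(\de_{y}^{2}-k^{2})\phi_{k}-2\phi_{k}\;=\;\calN_{k}(\phi),
\]
where $\calN_{k}(\phi)$ is the $k$-th Fourier coefficient of $-(ik)^{-1}\{\phi,\omega\}$; every term of $\calN_{k}$ carries at least one factor of the non-zero-mode part $\phi_{\neq}:=\sum_{k\neq 0}\phi_{k}(y)e^{ikx}$, because both the bracket $\{\cdot,\cdot\}$ and the $1/(ik)$ project only onto non-zero $x$-frequencies.

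The analytic heart of the argument is the quantitative invertibility estimate
\[
\|\phi_{k}\|_{H^{s+1}([-1,1])}\;\le\;C\,\|\mathcal{R}_{c,k}\phi_{k}\|_{H^{s-1}([-1,1])},\qquad k\in\bbZ\setminus\{0\},\ c\in\bbR,
\]
on functions with $\phi_{k}(\pm 1)=0$, uniform in $(c,k)$. For $c\notin[0,1]$ the coefficient $y^{2}-c$ is bounded away from $0$ on $[-1,1]$, so $\mathcal{R}_{c,k}$ is a regular Sturm--Liouville operator whose only $H^{1}_{0}$-kernel element is zero (classical linear stability of Poiseuille). For $c\in[0,1]$ a critical layer forms at $y=\pm\sqrt{c}$, but the non-degeneracy $U_{P}''\equiv 2\neq 0$ regularizes the singularity: a Frobenius/Wasow analysis rules out $C^{2}$-smooth kernel elements vanishing on $\{\pm 1\}$ and furnishes a uniform constant across $c\in[0,1]$. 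A standard elliptic bootstrap then upgrades the $H_{0}^{1}$-coercivity to the claimed $H^{s+1}$--$H^{s-1}$ bound.

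Combining the invertibility with the algebra property of $H^{s-1}(\bbT\times[-1,1])$ (valid for $s>3$, hence for $s>5$) and the structural observation about $\calN_{k}$ gives
\[
\|\calN(\phi)\|_{H^{s-1}}\;\lesssim\;\|\phi\|_{H^{s+1}}\,\|\phi_{\neq}\|_{H^{s+1}}\;\lesssim\;\eps_{0}\,\|\phi_{\neq}\|_{H^{s+1}}.
\]
Summing in $k$ via Parseval, $\|\phi_{\neq}\|_{H^{s+1}}\le C\eps_{0}\|\phi_{\neq}\|_{H^{s+1}}$; for $\eps_{0}$ small enough that $C\eps_{0}<1$ this forces $\phi_{\neq}\equiv 0$. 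Therefore $\phi=\phi(y)$ depends only on $y$, so $U_{2}=\de_{x}\Psi\equiv 0$ and $U$ is a shear flow. The principal obstacle -- and the source of the regularity threshold $s>5$ -- is the uniform Rayleigh invertibility across all real $c$, and especially through the critical-layer regime $c\in[0,1]$: one needs enough Sobolev regularity to pass through the singular layer of the ODE (via embedding into $C^{4}$) and to absorb the small derivative loss relative to the quadratic nonlinearity.
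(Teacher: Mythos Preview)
Your overall strategy---reduce to a stationary equation in the moving frame, isolate the nonzero $x$-modes, and close a smallness inequality---matches the paper's. But the key technical step you invoke is \emph{not} the one the paper uses, and the version you state is not justified.

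The paper never inverts the Rayleigh operator $\mathcal{R}_{c,k}$. Instead it linearizes around the \emph{nearby shear} $V(y)=y^{2}-c-\partial_{y}\langle\widetilde\psi\rangle$ (so the zero mode is absorbed into the background, not into the nonlinearity) and proves, via the explicit commutator quantity
\[
A_{V}(\omega)=\langle V'\partial_{x}\Lin_{V}\omega,\partial_{y}\omega\rangle+\langle V'\partial_{y}\Lin_{V}\omega,\partial_{x}\omega\rangle,
\]
the \emph{weighted} coercivity
\[
\|\Lin_{V}\omega\|_{\dot H^{1}}\,\|\omega\|_{\dot H^{1}}\;\gtrsim\;\|V'\partial_{x}\omega\|_{L^{2}}^{2}+\|\partial_{x}\nabla\psi\|_{L^{2}}^{2}.
\]
The weight $V'$, which vanishes at the critical point, is exactly what accommodates the degeneracy; no critical-layer analysis is needed. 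The $\dot H^{5}$ threshold then enters only through the chain of interpolations used to absorb $\|u\cdot\nabla\omega\|_{\dot H^{1}}$ back into $\|\partial_{x}\psi\|_{\dot H^{1}}$.

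By contrast, your argument hinges on the a priori bound $\|\phi_{k}\|_{H^{s+1}}\le C\|\mathcal{R}_{c,k}\phi_{k}\|_{H^{s-1}}$ uniformly in $(c,k)$, including $c\in[0,1]$. This is a strong statement: for $c\in[0,1]$ the leading coefficient $y^{2}-c$ vanishes and $\mathcal{R}_{c,k}$ is a \emph{degenerate} second-order operator, so the ``standard elliptic bootstrap'' you cite does not apply. A Frobenius expansion tells you the structure of homogeneous solutions near the singular points, but it does not by itself produce a uniform-in-$c$ Sobolev resolvent bound across the critical layer; establishing such bounds (limiting absorption principles for Rayleigh) is a substantial undertaking in its own right and is precisely what the paper's energy method is designed to avoid. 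A smaller issue: the hypothesis \eqref{eq:poisClose} bounds only $\|\omega\|_{H^{s}}$, so $\|\phi\|_{H^{s+1}}\lesssim\eps_{0}$ fails for the zero mode (an uncontrolled constant in $\phi_{0}'$ survives); this leaks an $O(1)$ linear term $-\phi_{0}'\partial_{x}\omega_{\neq}$ into your $\calN$, which you would have to reabsorb into $c$ or into the background shear---exactly what the paper does by working with $V$ rather than $y^{2}$.
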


\begin{proof}[About the proof of Theorem \ref{thm:main_Pois} (full details in Section \ref{sec:Pois})]
 The proof of this result relies on a strong coercivity estimate for linearized operators around shears that are themselves close to the Poiseuille flow in the Euler equations. This further illuminates the contrast with the setting of the Kolmogorov flow, where no such estimate for the linearized operator $\Lin_K$ is available on $\T^2$.
 Combining this coercivity bound with the equations satisfied near $U_P$, we then obtain a contradiction if $U$ is both non-shear and close to $U_P$, as in the statement of Theorem \ref{thm:main_Pois}.
\end{proof}

\paragraph{In the Navier-Stokes equations.}\label{ssec:results-NS}
The above behavior is closely mirrored in the related Navier-Stokes settings: the linearized problems near the Poiseuille flow and the bar states (connected to the Kolmogorov flow) both exhibit an enhanced rate of dissipation \cite{CZEW19,WZZ20,WZ19}. Already early experiments of Reynolds on pipe flows \cite{Reynolds83} showed that such effects cannot be expected to occur in the nonlinear setting in general. Instead, one may hope to establish the existence of a nonlinear stability threshold depending on characteristic quantities of the flow (the so-called Reynolds number, here inversely proportional to the kinematic viscosity $\nu>0$): for initial data below the threshold, the nonlinear problem can be treated perturbatively and linear effects persist, whereas above it turbulent motion and instabilities may occur. And indeed, results of this type have been first demonstrated for monotone shear flows, with subsequent refinements on the precise size of the threshold \cite{BVW18,MZcrit19,MZco19,CLWZ18}

Our previous work \cite{CZEW19} proved the existence of such a threshold near the Poiseuille flow in the Navier-Stokes equations, while for the bar states on rectangular tori $\T^2_\delta$ with $0<\delta<1$ this was shown in \cite{WZZ20}. In stark contrast to these results, we show here that \emph{the corresponding result cannot hold for the bar states on $\T^2$}.

To make this precise, let us define the space $\D:=(\ker\Lin_K)^\perp$ and denote by $\PP_\D$ the associated orthogonal projection onto $\D$. In vorticity formulation, the linearization of the Navier-Stokes equations near the bar states $\Omega_{bar}=-\e^{-\nu t}\cos(y)$ is then given by
\begin{equation}\label{eq:Bar_lin}
 \de_t f+\e^{-\nu t}\Lin_K f=\nu\Delta f.
\end{equation}
The results of \cite{WZ19,WZZ20,SMM19} show that the enhanced dissipation in this linearized setting can be quantitatively captured by the statement that solutions $f(t)$ to \eqref{eq:Bar_lin} satisfy
\begin{equation}
 \norm{\PP_\D f(t)}_{L^2}\lesssim \e^{-c_1\nu^{1/2}t}\norm{\PP_\D f(0)}_{L^2}, \qquad \forall t\leq \frac{\tau}{\nu},
\end{equation}
where $c_1>0$ is some universal constant and $\tau>0$ is arbitrary.

Our next result demonstrates that there cannot be any threshold below which this $L^2$ decay also holds in the nonlinear Navier-Stokes problem near the bar states on $\T^2$, since there exist initial data arbitrarily close to those of the bar states that do not decay before the diffusive time scale $O(\nu^{-1})$ is reached.
\begin{theorem}\label{thm:main_Bar}
 For any $\nu>0$ there exists $0<\eps_0\ll \nu$ with the following property: let 
 $0<\eps\leq\eps_0$ and let $\Omega_\eps=\Delta\Psi_\eps$ be the vorticity associated to the stationary Euler flow of Theorem \ref{thm:main_Kolmo}, thus satisfying $\norm{\Omega_\eps-\Omega_{bar}(t=0)}_{L^2}=O(\eps)$. Then $\PP_\D\Omega_\eps$ is not dissipated at an enhanced rate: i.e.\ the solution $\Omega^\nu$ of the initial value problem
 \begin{equation}
  \begin{cases}
   \de_t\Omega^\nu+U^\nu\cdot\nabla\Omega^\nu=\nu\Delta\Omega^\nu,\\
   \Omega^\nu(0)=\Omega_\eps,
  \end{cases}
 \end{equation}
 on $\T^2$ satisfies for all $t\in[\frac{1}{2\nu},\frac{1}{\nu}]$ the lower bound
 \begin{equation}\label{eq:noL2decay}
  \norm{\PP_\D\Omega^\nu(t)}_{L^2}\gtrsim \norm{\PP_\D\Omega_\eps}_{L^2}.
 \end{equation}
\end{theorem}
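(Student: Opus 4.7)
The plan is to track the Fourier mode at wavenumber $(1,4)$ of $\Omega^\nu(t)$: by \eqref{eq:stat_sol_nontriv} it is of size $\sim\eps^2$ at $t=0$ and lies in $\D$, and I would show that it persists across the diffusive window $[1/(2\nu),1/\nu]$. Since $\cos(x)\cos(4y)\in\D$, this immediately yields \eqref{eq:noL2decay}.

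First, decompose $\Omega^\nu(t)=\Omega_\eps+\omega(t)$ with $\omega(0)=0$. Using the stationary Euler identity $U_\eps\cdot\nabla\Omega_\eps=0$ granted by Theorem \ref{thm:main_Kolmo}, the perturbation satisfies
\begin{equation*}
\partial_t\omega+\calL_\eps\omega+\tilde u\cdot\nabla\omega=\nu\Delta\omega+\nu\Delta\Omega_\eps,\qquad\omega(0)=0,
\end{equation*}
where $\calL_\eps\omega:=U_\eps\cdot\nabla\omega+\nabla^\perp\Delta^{-1}\omega\cdot\nabla\Omega_\eps$ is the linearization of $U\cdot\nabla\Omega$ about $\Omega_\eps$ and $\tilde u=\nabla^\perp\Delta^{-1}\omega$. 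Since Theorem \ref{thm:main_Kolmo} supplies uniform Gevrey-$1$ bounds on $\Omega_\eps$, all coefficients of $\calL_\eps$ and the forcing $\nu\Delta\Omega_\eps$ are controlled in the analytic class. A Gevrey energy estimate for 2D Navier-Stokes with a time-dependent analytic radius $\sigma(t)=\sigma_0-\lambda t$ (chosen so that $\sigma(1/\nu)\geq\sigma_0/2$) then propagates analytic regularity on $\omega$ over $[0,1/\nu]$.

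Next, project onto the mode $(1,4)$:
\begin{equation*}
\partial_t\widehat{\Omega^\nu}(t,1,4)=-17\nu\,\widehat{\Omega^\nu}(t,1,4)-\widehat{U^\nu\cdot\nabla\Omega^\nu}(t,1,4),
\end{equation*}
and use $U_\eps\cdot\nabla\Omega_\eps\equiv 0$ to rewrite the source as $\widehat{\calL_\eps\omega+\tilde u\cdot\nabla\omega}(t,1,4)$. A mode-by-mode analysis of this convolution -- combining the analytic control of $\omega$ with the explicit Fourier structure of $\Omega_\eps-\Omega_K$ coming from the construction of Theorem \ref{thm:main_Kolmo} -- shows that in the regime $\eps\ll\nu$ this source is sufficiently small that Duhamel yields
\begin{equation*}
\widehat{\Omega^\nu}(t,1,4)=e^{-17\nu t}\,\widehat{\Omega_\eps}(1,4)+R(t),\qquad |R(t)|\ll\eps^2.
\end{equation*}
Since $e^{-17\nu t}\geq e^{-17}$ on $[1/(2\nu),1/\nu]$ and $|\widehat{\Omega_\eps}(1,4)|\gtrsim\eps^2$ by \eqref{eq:stat_sol_nontriv}, this produces $|\widehat{\Omega^\nu}(t,1,4)|\gtrsim\eps^2$ and hence the claimed lower bound on $\|\PP_\D\Omega^\nu(t)\|_{L^2}$.

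The main obstacle is the estimate on the nonlinear source at mode $(1,4)$: the operator $\calL_\eps$ is unbounded on $L^2$ and has an infinite-dimensional kernel, so standard $L^2$ energy methods do not suffice and the argument has to be carried out in an analytic class. Moreover, one must exploit the exact Euler-stationarity of $\Omega_\eps$ (which removes the leading, $O(1)$ piece from the source at $(1,4)$, leaving only terms linear and quadratic in $\omega$), together with the fine Fourier structure of $\Omega_\eps-\Omega_K$ from Theorem \ref{thm:main_Kolmo}. The smallness assumption $\eps\ll\nu$ is precisely what makes this residual source subdominant to the viscous decay over the long time $1/\nu$.
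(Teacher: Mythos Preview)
Your decomposition $\Omega^\nu=\Omega_\eps+\omega$ against the \emph{fixed} Euler steady state is where the argument breaks. Since $\Omega_\eps$ does not solve Navier--Stokes, the forcing $\nu\Delta\Omega_\eps$ in your $\omega$-equation has an $O(\nu)$ leading piece (namely $\nu\cos(y)$), which over the window $[0,1/\nu]$ drives $\omega$ to size $O(1)$ --- indeed $\omega(t)\to-\Omega_\eps$ as $t\to\infty$. Your Gevrey scheme then cannot close: analytic energy estimates for transport by an $O(1)$ velocity (here $U_\eps\approx(\sin y,0)$) lose radius at a rate comparable to $\|\nabla U_\eps\|_{L^\infty}\sim 1$, so the choice $\sigma(t)=\sigma_0-\lambda t$ with $\lambda\lesssim\nu$ is incompatible with reaching $t=1/\nu$, and the viscosity is far too weak to compensate at the low frequencies that matter. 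Even granting regularity, the Duhamel remainder $R(t)=\int_0^t \e^{-17\nu(t-s)}\widehat{\calL_\eps\omega+\tilde u\cdot\nabla\omega}(s,1,4)\,ds$ is not obviously $o(\eps^2)$: the $O(\eps)$ corrections to $\calL_\eps$ acting on the $O(1)$ shear part of $\omega$ feed $O(\eps)$ contributions into nearby $x$-dependent modes, and tracking their return to $(1,4)$ over time $1/\nu$ is precisely the hard part you have deferred to a ``mode-by-mode analysis''.

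The paper avoids both obstacles by comparing $\Omega^\nu$ not with $\Omega_\eps$ but with its \emph{heat flow} $\Omega^h(t)=\e^{\nu t\Delta}\Omega_\eps$. Then $\Omega^\nu-\Omega^h$ is forced only by $U^h\cdot\nabla\Omega^h$, and the key observation is that this is $O(\eps^2)$ \emph{uniformly in $t$}: up to order $\eps$, $\Omega_\eps$ is a sum of shears and $\cos(x)$, each of which the heat semigroup merely rescales, and all of whose pairwise Euler interactions vanish. A plain $L^2$ energy estimate with Gr\"onwall --- no Gevrey machinery at all --- then yields $\|\Omega^\nu(t)-\Omega^h(t)\|_{L^2}\lesssim t\eps^2\e^{Ct}\leq\eps/100$ on $[0,1/\nu]$ once $\eps_0$ is taken exponentially small in $1/\nu$, after which the lower bound on $\PP_\D\Omega^\nu$ is read off from the explicit heat evolution of the low modes.
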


\begin{proof}[About the proof of Theorem \ref{thm:main_Bar} (full details in Section \ref{sec:Bar})]
 To prove this we use the stationary states of the Euler equations constructed in Theorem \ref{thm:main_Kolmo}. We combine this here with the fact that the Navier-Stokes evolution preserves shears and uni-modal flows. The result is that one can still move away from the Kolmogorov flow in an almost stationary fashion, even in the Navier-Stokes equation. 
\end{proof}

We remark here once more on the crucial role played by the global degeneracy of $U_K$ on the square torus. This can be broken by considering $U_K$ on a \emph{rectangular} torus, as has been done in \cite{WZZ20}. In that setting, the kernel of the linearization $\Lin_K$ trivializes to include only shear flows again, and enhanced dissipation can be shown to hold not only linearly, but also below a threshold in the nonlinear problem.

\subsection{Perspectives}
While our results provide a striking look at the rich dynamics of solutions to $2d$ Euler and Navier-Stokes equations even near relatively simple, stationary flow configurations, they also open the door to many further questions. We briefly discuss here two areas that seem of particular relevance to us.

\paragraph{Local structure of $2d$ stationary Euler flows.}
Given a stationary solution of the $2d$ Euler equations, a natural and difficult question is whether one can describe all $2d$ Euler stationary states near it.  For some shear flows, it is possible to show that all smooth stationary states nearby are shear flows (this was done for the Couette flow in \cite{LZ11}, while our Theorems \ref{thm:main_rigid_intro} resp.\ \ref{thm:main_Pois} demonstrate it for the Kolmogorov flow on rectangular tori resp.\ the Poiseuille flow). 
In \cite{CS12}, set on general domains homeomorphic to an annulus, the authors establish a one-to-one correspondence between stationary states near a base "non-degenerate" state and their distribution function (similar to the case of the Couette flow); in the recent work \cite{CDG20} 
certain Liouville-type theorems are established (in the spirit of \cite{HN17,HN19}), which show that suitable steady solutions with no stagnation points occupying a two-dimensional periodic channel must have certain structural symmetries.

This is manifestly false for the Kolmogorov flow on $\mathbb{T}^2$ since any neighborhood of the Kolmogorov flow contains a four-dimensional set of solutions to $\Delta\psi=-\psi$. A natural question is whether these are the only "extra" solutions near the Kolmogorov flow. Our construction in Theorem \ref{thm:main_Kolmo} shows that there are other solutions and that the local structure of the set of $2d$ Euler stationary states near Kolmogorov is much richer. However, a  characterization of the full set of stationary solutions near the Kolmogorov flow on $\mathbb{T}^2$ is an outstanding open problem. In Theorem \ref{thm:main_Kolmo}, we find one non-trivial ``branch'' of solutions leaving Kolmogorov in a certain direction, but we also show that there cannot be any ``branches'' in certain other directions (see Proposition \ref{prop:someflexT2}). It seems highly non-trivial to characterize all these branches since there is balance between freedom and rigidity. 

\paragraph{Bar states and dipoles in $2d$ Navier-Stokes.}
Besides the bar state $\Omega_{bar}(t,y)=-\e^{-\nu t}\cos(y)$, the Navier-Stokes equations on $\T_\delta^2$ admit another explicit solution given by 
\begin{align}
\Omega_{dip}(t,y)=-\e^{-\nu t}\cos(y)-\e^{-\frac{\nu}{\delta^2} t}\cos (x/\delta),\qquad \delta\in(0,1],
\end{align}
known as \emph{dipole state}. Even at the linearized level, the questions of stability and enhanced dissipation properties of $\Omega_{dip}$ remain unsolved, in both the square and rectangular torus cases.
An interesting analysis in this direction has been carried out in \cite{BCS19}, following the work \cite{BW13}. In particular, evidence was provided there to 
show that $\Omega_{dip}$ is a (local) attractor in the square torus case $\delta=1$, while for $\delta<1$, $\Omega_{bar}$ is the asymptotic end state configuration, at least for small perturbations. While the latter statement on the nonlinear stability of $\Omega_{bar}$ was proven rigorously in \cite{WZZ20},
the case of the square torus is completely open. Our result, however, points strongly in the direction of confirming the predictions of  \cite{BCS19}. In  particular, Theorem \ref{thm:main_Bar} shows that $\Omega_{bar}$ is \emph{not} a local attractor for nearby perturbations on $\T^2$.

\subsection{Plan of the Article}
Section \ref{sec:Kolmo} lies at the heart of this article, and begins by establishing Theorem \ref{thm:main_Kolmo}. First we construct nontrivial stationary states near $U_K$ on $\mathbb{T}^2$ using a two-step contraction mapping argument in $H^2$. Second, we show that these stationary states can be taken to be arbitrarily close to $U_K$ in the analytic norm in Section \ref{ssec:elliptic}. In Section \ref{ssec:Kolmo_obstruct}, we demonstrate Proposition \ref{prop:main_someflex}, showing that not every linearly neutral direction gives rise to a nonlinear steady state near $U_K$. Section \ref{ssec:Kolmo_rigid} then gives the proof of rigidity on rectangular tori (Theorem \ref{thm:main_rigid_intro}): all stationary states near $U_K$ on $\mathbb{T}^2_\delta$ are shears when $\delta>0$ is not an integer. 

Section \ref{sec:Bar} is devoted to the Navier-Stokes equations, showing that no nonlinear enhanced dissipation result can hold near the bar state $U_{bar}$ on $\T^2$ (Theorem \ref{thm:main_Bar}).

Finally, we prove the rigidity result Theorem \ref{thm:main_Pois} for traveling waves near the Poiseuille flow $U_P$ in Section \ref{sec:Pois}.

\section{Stationary States near Kolmogorov flow}\label{sec:Kolmo}
In this section we investigate the existence of stationary states near the Kolmogorov flow $U_K=(\sin(y),0)$ on square or rectangular tori. To begin, we note that any nearby shear is trivially a stationary solution as well. In the specific setting of the \emph{square torus} $\T^2$, one verifies directly that in addition, flows of the form $\cos(y)+a\cos(x)+b\sin(x)$ are stationary, provided $a,b\in\R$ small enough. This already hints at the global degeneracy of this particular problem.

To understand the difficulties involved in finding a larger class of non-trivial stationary states near the Kolmogorov flow on $\T^2$, let us try to (formally) search for a solution of the $2d$ Euler equations of the form
\begin{equation}
\Omega_\eps = -\cos(y)+\sum_{j=1}^\infty \eps^j \omega_j(x,y),
\end{equation} 
with $\eps$ a small parameter and vorticity $\Omega_\eps$ non-shear and not just a solution of $\Delta\Omega_\eps=-\Omega_\eps$. By stationarity, the perturbation $\omega_\eps:=\sum_{j=1}^\infty\eps^j\omega_j$ has to satisfy the linearized equation
\begin{equation}
 \Lin_K\omega_\eps=-u_\eps\cdot\nabla\omega_\eps,\quad u_\eps=\nabla^\perp\Delta^{-1}\omega_\eps,\quad \Lin_K=\sin(y)(1+\Delta^{-1})\de_x,
\end{equation}
or equivalently
\begin{equation}\label{iterative}
\Lin_K\omega_k=-\sum_{j=1}^{k-1} u_j\cdot\nabla \omega_{k-j}, \qquad u_j=\nabla^\perp \Delta^{-1}\omega_j.
\end{equation} 
We can therefore hope to solve for $\omega_k$ given $\omega_1,...,\omega_{k-1}$. Of course, this method is unlikely to work directly since there is a clear loss of derivatives in this process. However, there are even more fundamental problems: the global degeneracy of $U_K$ on $\T^2$ is witnessed by the fact that the operator $\Lin_K$ is not invertible (we have $\ker\Lin_K=\{\cos(x),\sin(x)\}\cup\{f\in L^2:\de_x f\equiv 0\}$), and the solvability conditions for an equation of the form $\mathcal{L}_Kf=g$ are complicated. In particular, we would need to know that, at each step, the function
\begin{equation}
\frac{1}{\sin(y)}\sum_{j=1}^{k-1} u_j\cdot\nabla \omega_{k-j}
\end{equation}  
is \emph{smooth}, mean-free in $x$ only, and orthogonal to $\sin(x)$ and $\cos(x)$.

When $k=1$,  we see that $\omega_1=G(y)+a \sin(x)+b\cos(x)$, 
and we are free to choose $a,b\in\mathbb{R}$ and $G\in C^1(\mathbb{T})$ is mean-free. On the one hand, any non-trivial choice of $G,a,b$
will produce, through the nonlinearity, non-shear modes in $\omega_2$. On the other hand, the solvability of \eqref{iterative}
needs to be preserved, reducing drastically the degrees of freedom. Although it is not clear a priori whether this formal process can even be continued for all $k$, using the freedom of choice of $\omega_k$ at each step one can show the existence of non-shear formal power series solutions. The loss of derivatives in this process, however, makes it very difficult to rigorously show that the series converges even to an $L^2$ solution.

To get around the derivative loss, we choose to construct stationary solutions through the semilinear equation \eqref{eq:stat_sol} instead, branching
away from the respective equation \eqref{eq:psiKolmo} that the Kolmogorov flow satisfies. 

This culminates in Theorem \ref{thm:main_Kolmo}, which is established in Sections \ref{ssec:KolmoH2}-\ref{ssec:elliptic}. We proceed as follows: First we prove the corresponding statement for stream functions in $H^2(\T^2)$ in Proposition \ref{prop:KolmoH2} below. Lemma \ref{lem:psi_eps_exp} in Section \ref{ssec:Kolmo_nontriv} then demonstrates that these stream functions are indeed non-trivial in the sense that they do not lie in the kernel $\ker\Lin_K$. Via an elliptic regularity type argument our stationary states can subsequently be upgraded to have analytic regularity -- see Proposition \ref{prop:elliptic} as well as Lemma \ref{lem:analytic} and Corollary \ref{cor:analytic} in Section \ref{ssec:elliptic}. 

Following this, we establish some obstructions to a natural generalization of Proposition \ref{prop:someflexT2} of Section \ref{ssec:Kolmo_obstruct}, as well as a rigidity Theorem \ref{thm:main_rigid_intro} for rectangular tori in Section \ref{ssec:Kolmo_rigid}.
Now let us state the results that combine to give Theorem \ref{thm:main_Kolmo}.

\begin{proposition}\label{prop:KolmoH2}
 There exists $\eps_0>0$ such that for any $0<\eps\leq\eps_0$ there exist functions $\Psi_\eps\in H^2(\T^2)$ and $F_\eps:\R\to\R$ satisfying
 \begin{equation}\label{eq:full_stat_sol}
  \Delta\Psi_\eps=F_\eps(\Psi_\eps)
 \end{equation}
 and
 \begin{equation}\label{eq:H2_diff}
  \norm{\cos(y)-\Psi_\eps}_{H^2(\T^2)}= O(\eps),
 \end{equation}
 with
 \begin{equation}\label{eq:catseye}
  \ip{\Psi_\eps,\cos(x)\cos(4y)}=-\eps^2\frac{\pi^2}{128}+O(\eps^3).
 \end{equation}
 Here the functions $F_\eps$ can be chosen to be polynomials of degree five.
\end{proposition}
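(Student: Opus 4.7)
My approach is the ansatz $\Psi_\eps = \cos y + \eps\psi$ and $F_\eps(z) = -z + \eps f(z)$ with $f(z) = c_1 z + c_3 z^3 + c_5 z^5$ an odd real quintic. Substituting into \eqref{eq:full_stat_sol} and subtracting \eqref{eq:psiKolmo} reduces the problem to the semilinear elliptic equation
\begin{equation*}
(\Delta + 1)\psi = f(\cos y + \eps\psi).
\end{equation*}
The structural difficulty is that $\Delta+1$ has a four-dimensional $L^2(\T^2)$-kernel $\mathcal{K}_0 := \mathrm{span}\{\cos x, \sin x, \cos y, \sin y\}$, so solvability forces the right-hand side to be orthogonal to $\mathcal{K}_0$. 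I would work in the symmetry class of functions even in both $x$ and $y$, which is preserved by $\psi \mapsto f(\cos y + \eps \psi)$ because $f$ is odd; only $\cos x$ and $\cos y$ then remain as relevant kernel directions.

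I would decompose $\psi = a \cos x + \psi^\perp$ with $\psi^\perp \in H^2(\T^2) \cap \mathcal{K}_0^\perp$ and fix the normalization $a = 1$; including this kernel direction is essential, since $a = 0$ forces $\psi = \psi(y)$ and $\Psi_\eps$ to be merely a shear. On $\mathcal{K}_0^\perp$ the operator $\Delta + 1$ is boundedly invertible (the next Laplace eigenvalues have absolute value $\geq 2$), so the problem is equivalent to
\begin{equation*}
\psi^\perp = (\Delta+1)^{-1}\PP_{\mathcal{K}_0^\perp} f(\cos y + \eps(\cos x + \psi^\perp))
\end{equation*}
paired with the two compatibility conditions $\PP_{\cos x} f(\cos y + \eps\psi) = 0$ and $\PP_{\cos y} f(\cos y + \eps\psi) = 0$. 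A single scalar freedom remains among $(c_1, c_3, c_5)$, which I would use at the end to tune \eqref{eq:catseye}.

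For the contraction I would work on a small ball in $(H^2(\T^2) \cap \mathcal{K}_0^\perp) \times \R^2$ for $(\psi^\perp, c_1, c_3)$, with $c_5$ fixed. The $\cos x$-compatibility is automatically $O(\eps)$ (since $f(\cos y)$ is $x$-independent), so the natural system for the implicit function theorem is $\tfrac{1}{\eps}\PP_{\cos x} f(\cos y + \eps\psi) = 0$ together with $\PP_{\cos y} f(\cos y + \eps\psi) = 0$. At $\eps = 0$, using $\cos^3 y = \tfrac{1}{4}(3\cos y + \cos 3y)$, $\cos^4 y = \tfrac{1}{8}(3 + 4\cos 2y + \cos 4y)$, and $\cos^5 y = \tfrac{1}{16}(10\cos y + 5\cos 3y + \cos 5y)$, these read $c_1 + \tfrac{3c_3}{2} + \tfrac{15 c_5}{8} = 0$ and $c_1 + \tfrac{3c_3}{4} + \tfrac{5c_5}{8} = 0$; the Jacobian with respect to $(c_1, c_3)$ has determinant $-3/4 \neq 0$, so $(c_1, c_3)$ are obtained as Lipschitz functions of $(\psi^\perp, \eps)$. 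Substituting back, the resulting map on $\psi^\perp$ is a contraction on a small $H^2$-ball for $\eps$ small, using that $H^2(\T^2)$ is a Banach algebra to control the polynomial nonlinearity. This yields a unique fixed point $(\psi^\perp_\eps, c_1(\eps), c_3(\eps))$ with $\norm{\psi}_{H^2}$ uniformly bounded, hence \eqref{eq:H2_diff}.

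For the non-triviality \eqref{eq:catseye}, I would expand $f(\cos y + \eps \psi) = f(\cos y) + \eps f'(\cos y)\psi + O(\eps^2)$ with $\psi = \cos x + O_{H^2}(\eps)$. The quintic term $5c_5 \cos^4 y$ inside $f'(\cos y)$ produces, via the formula for $\cos^4 y$ above, a contribution $\tfrac{5 c_5}{8}\eps\cos x \cos 4y$ to the right-hand side. Since $(\Delta+1)$ acts on $\cos x \cos 4y$ as multiplication by $-16$, the $\cos x \cos 4y$-coefficient of $\psi^\perp$ equals $-\tfrac{5c_5}{128}\eps + O(\eps^2)$, and $\norm{\cos x\cos 4y}_{L^2(\T^2)}^2 = \pi^2$ gives $\ip{\Psi_\eps, \cos x \cos 4y} = -\tfrac{5c_5\pi^2}{128}\eps^2 + O(\eps^3)$; the choice $c_5 = 1/5$ recovers \eqref{eq:catseye}. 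The main obstacles I anticipate are the entanglement between the kernel and the compatibility conditions — $\psi^\perp$ and the coefficients of $f$ must be determined simultaneously, and the nondegeneracy of the rescaled $2\times 2$ Jacobian must be verified — and the resonance behind \eqref{eq:catseye}, which is precisely why a quintic (rather than merely cubic) $f$ is necessary: a cubic would produce only $\cos x$ and $\cos x\cos 2y$ modes via $f'(\cos y) = c_1 + 3c_3\cos^2 y$, and no $\cos 4y$ resonance.
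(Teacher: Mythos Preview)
Your proposal is correct and follows essentially the same route as the paper: the same ansatz $\Psi_\eps=\cos y+\eps\cos x+\eps\psi^\perp$ with an odd quintic $f$ whose top coefficient is fixed at $1/5$, the same even--even symmetry reduction, the same two compatibility conditions (orthogonality to $\cos x$ and $\cos y$) used to determine the two lower coefficients of $f$, and the same contraction on $\psi^\perp\in(\ker(\Delta+1))^\perp$ followed by the explicit $\cos x\cos 4y$ expansion. The one subtlety you gloss over is that the Lipschitz constant of the resulting map on $\psi^\perp$ has an $\eps$-\emph{independent} part, because even at $\eps=0$ the rescaled $\cos x$-compatibility still depends on $\psi^\perp$ through $\langle f'(\cos y)\psi^\perp,\cos x\rangle$; this is not made small by shrinking $\eps$ or the ball, and the paper devotes Lemma~2.3 and Proposition~2.5 to checking explicitly that it is below $2/3$.
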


\begin{remark}\label{rem:KolmoH2}
 We comment on a few extra details.
 \begin{enumerate}
  \item More precisely, $\Psi_\eps$ can be computed to have the expansion
 \begin{equation}
 \begin{aligned}
  \Psi_\eps&=\cos(y)+\eps\left[\cos(x)+c_0\cos(3y)-c_1\cos(5y)\right]\\
  &\quad +\eps^2\left[-c_2\cos(x)\cos(4y)-\frac{1}{32}b_1\cos(3y)-c_3\cos(7y)+c_4\cos(9y)\right]\\
  &\quad +O(\eps^3),
 \end{aligned} 
 \end{equation}
 as is shown in Lemma \ref{lem:psi_eps_exp} (where also $c_0,\ldots,c_4, b_1$ are given).
 \item Furthermore, one can easily modify our arguments to show that many such families $(\Psi_\eps)_\eps$ exist. Indeed, one way to see this is simply to modify our construction of the functions $F_\eps$ by adding polynomials with coefficients of order $\eps^2$.
 \end{enumerate}
\end{remark}

In order to give the precise analyticity statement, for $\lambda>0$ we introduce the Gevrey-$1$ space $\mathcal{G}^\lambda(\T^2)$ as the Banach space of $L^2$ functions, whose norm
\begin{equation}
 \norm{f}_{\mathcal{G}^\lambda}:=\norm{\e^{\lambda\abs{D}}f}_{L^2}=\norm{\e^{\lambda\abs{k}}\hat{f}(k)}_{\ell^2(k)}<+\infty
\end{equation}
is finite. Clearly, such functions are analytic, with radius of analyticity $\lambda$.

\begin{proposition}\label{prop:elliptic}
 The functions $\Psi_\eps$ constructed in Proposition \ref{prop:KolmoH2} are in fact analytic, i.e.\ $\Psi_\eps\in C^\omega(\T^2)$, and there exists $\lambda>0$ and a constant $M>0$, both independent of $\eps>0$, such that they satisfy
  \begin{equation}\label{eq:analytic_prox}
  \norm{\cos(y)-\Psi_\eps}_{\mathcal{G}^\lambda(\T^2)}\leq M\cdot \eps.
 \end{equation}
\end{proposition}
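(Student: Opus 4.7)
My plan is to upgrade the $H^2(\T^2)$ stationary state $\psi_\eps=\eps^{-1}(\Psi_\eps-\cos(y))$ from Proposition \ref{prop:KolmoH2} to Gevrey-$1$ regularity by rerunning its contraction argument in a Gevrey-Sobolev algebra. The whole strategy rests on two structural features of \eqref{eq:psi_elliptic_general}: the base profile $\cos(y)$ is a trigonometric polynomial, and the nonlinearity $f_\eps$ is polynomial and enters with a small prefactor $\eps$, so nothing prevents working directly in an analytic function space.

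For $\lambda>0$ I would introduce the auxiliary space
\begin{equation}
X_\lambda:=\Bigl\{g\in L^2(\T^2)\,:\,\|g\|_{X_\lambda}:=\|\e^{\lambda|D|}\langle D\rangle^{2}g\|_{L^2}<\infty\Bigr\},
\end{equation}
which embeds continuously into $\mathcal{G}^\lambda(\T^2)$ and, using Young's inequality on Fourier convolutions together with $\langle k\rangle^{-2}\in\ell^2(\Z^2)$, is a Banach algebra with algebra constant independent of $\lambda$. Two further properties are immediate from the Fourier-side representation: $\|\cos(y)\|_{X_\lambda}$ is bounded uniformly in $\lambda\in(0,1]$ because $\cos(y)$ has only two Fourier modes; and, since the multiplier of $\Delta+1$ satisfies $|1-|k|^{2}|\geq 1$ for $k\notin\{(\pm 1,0),(0,\pm 1)\}$, the resolvent $(\Delta+1)^{-1}_{K^\perp}$ on the orthogonal complement of the four-dimensional kernel $K:=\mathrm{span}\{\cos(x),\sin(x),\cos(y),\sin(y)\}$ is a bounded operator on $X_\lambda$ of norm at most $1$, uniformly in $\lambda$.

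I would then decompose $\psi_\eps=\psi_K+\psi^\perp$ along $L^2=K\oplus K^\perp$ and rerun the contraction in $X_\lambda$: the kernel part $\psi_K$ is a set of four numbers already pinned down by Proposition \ref{prop:KolmoH2} and trivially lies in $X_\lambda$, while $\psi^\perp$ is a fixed point of the map
\begin{equation}
T(\eta):=(\Delta+1)^{-1}_{K^\perp}\bigl[f_\eps(\cos(y)+\eps\psi_K+\eps\eta)\bigr]^\perp.
\end{equation}
Since $f_\eps$ is a quintic polynomial with coefficients bounded by the output of Proposition \ref{prop:KolmoH2}, the algebra property together with the uniform bounds on $\cos(y)$, $\psi_K$, and the resolvent produce the estimates
\begin{equation}
\|T(\eta)\|_{X_\lambda}\leq C_0+C_1\eps\bigl(1+\|\eta\|_{X_\lambda}\bigr)^5,\quad \|T(\eta_1)-T(\eta_2)\|_{X_\lambda}\leq C_2\eps(1+R)^4\|\eta_1-\eta_2\|_{X_\lambda}
\end{equation}
on the closed $X_\lambda$-ball of radius $R$, with constants independent of $\lambda\in(0,\lambda_0]$ and $\eps\in(0,\eps_0]$. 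Taking $R=2C_0$ and $\eps$ small enough makes $T$ a strict contraction with a unique fixed point in $X_\lambda$; by uniqueness in the larger space $H^2\supset X_\lambda$, this coincides with $\psi^\perp$ from the $H^2$ construction. The embedding $X_\lambda\hookrightarrow\mathcal{G}^\lambda(\T^2)$ then yields $\|\psi_\eps\|_{\mathcal{G}^\lambda}\leq M$ uniformly in $\eps$, which rescales to \eqref{eq:analytic_prox}.

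The main concern is the uniformity in $\lambda$ of every estimate involved, since any degradation as $\lambda$ varies would prevent fixing a single analytic radius independent of $\eps$. This uniformity is however precisely what the two structural features guarantee: multiplying by the trigonometric polynomial $\cos(y)$ is free in $X_\lambda$, and the genuinely nonlinear correction carries a prefactor $\eps$, so the contraction is closed by the smallness of $\eps$ rather than by the smallness of $\lambda$. In particular, the Gevrey radius $\lambda$ can be chosen once and for all at some fixed small positive value, and kept fixed as $\eps\to 0$.
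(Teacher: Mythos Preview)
Your approach is correct and takes a genuinely different route from the paper. The paper upgrades regularity via an elliptic bootstrap (Lemma~\ref{lem:analytic}): it proves that any $H^2$ solution of $\mathscr{L}\varphi=\sum_{k} a_k\varphi^k$ with analytic coefficients $a_k$ and elliptic $\mathscr{L}$ is analytic, by an induction on $\|\partial^\alpha\varphi\|_{H^2}$ that is then summed into a Gevrey norm; applied to $\psi_\eps$ with the polynomial coefficients $A,B$ frozen at their fixed-point values, this argument makes no use of the smallness of $\eps$. You instead exploit $\eps$ directly: once $f_\eps$ and the kernel part $\psi_K$ are frozen from the $H^2$ solution, the residual fixed-point map has Lipschitz constant $O(\eps)$ --- the $\tfrac{2}{3}$ leading term in Proposition~\ref{prop:contract} came entirely from the $\psi$-dependence of $a_0(\psi),b_0(\psi)$, which you have eliminated --- so the contraction closes in any Banach algebra containing the data, in particular $X_\lambda$, with constants uniform in $\lambda$. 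The identification of the $X_\lambda$ fixed point with the $H^2$ one via uniqueness of the \emph{frozen-coefficient} contraction in $H^2$ (not the original one) is what closes the loop, and it works since that map is already an $O(\eps)$-contraction on any bounded $H^2$ ball. Your argument is shorter and more direct for this specific equation; the paper's yields a reusable black-box lemma that requires neither a small parameter nor a contraction structure.
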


\subsection{Proof of Proposition \ref{prop:KolmoH2}}\label{ssec:KolmoH2}
We give the proof of Proposition \ref{prop:KolmoH2} in the following subsections: First we discuss the setup of the basic construction in Section \ref{ssec:Kolmo_setup}, which then leads to a contraction argument (Section \ref{ssec:Kolmo_contr}), proving \eqref{eq:full_stat_sol}. After this we can work with the explicit expansion of our functions to establish \eqref{eq:H2_diff}, i.e.\ the presence of modes, which guarantees that the associated flows do not lie in the kernel of the linearization $\Lin_K$. %Finally, Proposition \ref{prop:elliptic} is proved in Section \ref{ssec:elliptic}.

\subsubsection{Setup}\label{ssec:Kolmo_setup}
Our goal is to find stream functions $\Psi_\eps:\T^2\to\R$, for $\eps>0$ sufficiently small, that satisfy
\begin{equation}\label{eq:stat-stream}
 \Delta \Psi_\eps=F_\eps(\Psi_\eps),
\end{equation}
and are ``close'' to the Kolmogorov flow $\Psi_K:\T^2\to\R$, $(x,y)\mapsto \cos(y)$. Since this flow itself satisfies \eqref{eq:stat-stream} with $F_K:\R\to\R$, $z\mapsto -z$, we make the ansatz
\begin{equation}
 \Psi_\eps=\Psi_K+\eps\psi,\quad F_\eps=F_K+\eps f,
\end{equation}
for perturbations $\psi:\T^2\to\R$ and $f:\R\to\R$ which are to be determined. Plugging this into \eqref{eq:stat-stream}, we obtain that $(f,\psi)$ need to satisfy $\Delta\psi+\psi=f(\cos(y)+\eps\psi)$.
Since $\cos(x)\in\ker(\Delta+1)$, we may replace $\psi$ by $\cos(x)+\psi$, which gives us the following equation to be solved:
\begin{equation}\label{eq:psi_elliptic}
 \Delta\psi+\psi=f(\cos(y)+\eps\cos(x)+\eps\psi),\qquad\text{ with } \psi\perp\ker(\Delta+1).
\end{equation}
Taking $f$ as a quintic polynomial (with coefficients $A,B\in\R$ to be determined as functionals of $\psi$ and $\eps>0$)
\begin{equation}
f(A,B;s)=As+Bs^3+\frac{1}{5}s^5,
\end{equation}
we obtain
\begin{equation}\label{eq:base}
\begin{aligned}
 \Delta\psi+\psi &= A\cos(y)+B\cos^3(y)+\frac{1}{5}\cos^5(y)\\
 &\quad +\eps\psi\Big(A+3B\cos^2(y)+\cos^4(y)\Big)\\
 &\quad +\eps\cos(x)\Big(A+3B\cos^2(y)+\cos^4(y)\Big)\\
 &\quad +R(B,\psi,\eps;x,y),
\end{aligned}
\end{equation}
with 
\begin{equation}\label{eq:rest}
\begin{aligned}
 R(B,\psi,\eps;x,y)&=\eps^2(\psi+\cos(x))^2\left(3B\cos(y)+2\cos^3(y)\right)\\
 &\quad +\eps^3(\psi+\cos(x))^3\left(B+2\cos^2(y)\right)\\
 &\quad +\eps^4(\psi+\cos(x))^4 \cos(y)+\eps^5(\psi+\cos(x))^5.
\end{aligned} 
\end{equation}
For simplicity we assume further that $\psi$ is an even function in both $x$ and $y$ (separately). Therefore,
a compatibility condition in order for \eqref{eq:base} to have a solution is that
\begin{equation}
 \ip{f(A,B;\cos(y)+\eps\cos(x)+\eps\psi),\cos(x)}=\ip{f(A,B;\cos(y)+\eps\cos(x)+\eps\psi),\cos(y)}=0.
\end{equation}
These equations can be viewed as restrictions for the two coefficients $A=A(\psi;\eps)$ and  $B=B(\psi;\eps)$: Plugging in \eqref{eq:base} and using that $\int_\T\cos^4(y)\dd y=\frac{3\pi}{4}$ and $\int_\T\cos^6(y)\dd y=\frac{5\pi}{8}$, we arrive at the two conditions
\begin{align}
 &(A(\psi;\eps),B(\psi;\eps))\cdot V_1=-\frac18-\eps\frac{3 B(\psi;\eps)}{2\pi^2}\ip{\psi,\cos^3(y)}-\eps \frac{1}{2\pi^2}\ip{\psi,\cos^5(y)}-\frac{1}{2\pi^2}\ip{R,\cos(y)}, \label{eq:first}\\
 &(A(\psi;\eps),B(\psi;\eps))\cdot V_2=-\frac{3}{8}\left[1+\frac{4}{3\pi^2}\ip{\psi,\cos^4(y)\cos(x)}\right] -\frac{1}{2\pi^2\eps}\ip{R,\cos(x)}\label{eq:second},
\end{align}
where
\begin{equation}
 V_1:=(1,\frac{3}{4}),\quad V_2:=(1,\frac{3}{2}\Big[1+\frac{1}{\pi^2}\ip{\psi,\cos^2(y)\cos(x)}\Big]).
\end{equation}
Observe that if $\abs{\ip{\psi,\cos^2(y)\cos(x)}}$ is sufficiently small, the vectors $V_1$ and $V_2$ are not parallel. Together with the prior remarks, this motivates our definition of the function space $X$ we will work in as
\begin{equation}
\begin{aligned}
 X:=\Big\{&\psi\in H^2: \psi(-x,y)=\psi(x,-y)=\psi(x,y),\quad \psi\perp \cos(y),\cos(x),\\
 &\abs{\ip{\psi,\cos^2(y)\cos(x)}}+\abs{\ip{\psi,\cos^4(y)\cos(x)}}\leq \frac{1}{100}, \quad \norm{\psi}_{H^2}\leq 10\Big\}.
\end{aligned} 
\end{equation} 

\begin{lemma}\label{lem:AB}
 There exists $\eps_1>0$ such that if $\psi\in X$, for $0\leq\eps\leq\eps_1$ the above relations \eqref{eq:first}, \eqref{eq:second} inductively define $(a_j(\psi))_{j\geq 0},(b_j(\psi))_{j\geq 0}\subset\R$ with the property that
 \begin{equation}\label{eq:AB_exp}
  A(\psi;\eps):=\sum_{j\geq 0}a_j(\psi)\eps^j,\qquad B(\psi;\eps):=\sum_{j\geq 0}b_j(\psi)\eps^j
 \end{equation}
 are well-defined, uniformly bounded for $\psi\in X$, and satisfy both \eqref{eq:first} and \eqref{eq:second}.
 
 Moreover, the maps
 \begin{equation}
  \psi\mapsto a_j(\psi), \qquad \psi\mapsto b_j(\psi),\qquad j\geq0,
 \end{equation}
 are Lipschitz continuous on $L^2$ with constants $L_j\leq L^j$ for some $L>0$, and the maps
 \begin{equation}
  \psi\mapsto a_0(\psi), \qquad \psi\mapsto b_0(\psi),
 \end{equation}
 are Lipschitz continuous on $\dot{H}^2$ (and thus also $H^2$) with constant $\tilde{L}_0\leq\frac{1}{4\pi}$.
\end{lemma}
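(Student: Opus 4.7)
The plan is to recast the pair of conditions \eqref{eq:first}--\eqref{eq:second} as a $2\times 2$ linear system $M(\psi)\binom{A}{B}=\vec r(\psi,A,B,\eps)$, where $M(\psi)$ is the matrix with rows $V_1$ and $V_2$ and $\vec r$ collects all other terms. For $\psi\in X$ the perturbation $\tfrac{1}{\pi^2}\ip{\psi,\cos^2(y)\cos(x)}$ has absolute value at most $\tfrac{1}{100\pi^2}$, so
\begin{equation*}
\det M(\psi)=\tfrac34+\tfrac{3}{2\pi^2}\ip{\psi,\cos^2(y)\cos(x)}
\end{equation*}
is bounded below by $\tfrac12$, and $M(\psi)^{-1}$ exists with a uniform bound in $\psi\in X$. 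Since the remainder $R$ in \eqref{eq:rest} is $O(\eps^2)$ (so $\eps^{-1}R=O(\eps)$), the right-hand side $\vec r$ admits a well-defined power series in $\eps$, and we search for $A,B$ as the power series \eqref{eq:AB_exp}.

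The coefficients are then determined by equating powers of $\eps$. At order $\eps^0$, one obtains
\begin{equation*}
M(\psi)\binom{a_0(\psi)}{b_0(\psi)}=\binom{-\tfrac{1}{8}}{-\tfrac{3}{8}\bigl(1+\tfrac{4}{3\pi^2}\ip{\psi,\cos^4(y)\cos(x)}\bigr)},
\end{equation*}
which uniquely determines $(a_0,b_0)$ by inverting $M(\psi)$. At order $\eps^j$ for $j\geq 1$, the fact that $R$ is a polynomial of degree five in $\eps(\psi+\cos(x))$ with coefficients polynomial in $B$ means that its $\eps^j$-coefficient is a polynomial in $(b_0,\ldots,b_{j-2})$ whose coefficients are fixed inner products of powers of $\psi+\cos(x)$ against explicit trigonometric functions. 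Each resulting equation takes the form $M(\psi)\binom{a_j}{b_j}=\vec r_j(\psi;a_0,b_0,\ldots,a_{j-1},b_{j-1})$ and is again solved by inverting $M(\psi)$.

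Convergence is established by an inductive bound $|a_j(\psi)|+|b_j(\psi)|\leq C_0\,L^j$ uniform on $X$, where $L>0$ depends only on structural constants and on the bound $\|\psi\|_{H^2}\leq 10$ (through Sobolev embedding, yielding $\|\psi+\cos(x)\|_{L^\infty}\lesssim 1$ on $X$). The key point is that each $\vec r_j$ is a sum of products of at most five factors drawn from previous $\{a_k,b_k\}$ multiplied by uniformly bounded inner products, which forces geometric growth via a standard majorant/Cauchy-type estimate. Hence the series \eqref{eq:AB_exp} converges for $\eps\leq\eps_1:=(2L)^{-1}$, and its sum solves both \eqref{eq:first} and \eqref{eq:second}. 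Lipschitz continuity in $L^2$ follows from the same structure: each $a_j,b_j$ is a polynomial in finitely many linear functionals $\psi\mapsto\ip{\psi,g_\alpha}$ with $g_\alpha$ fixed $L^2$ functions, so the Lipschitz constants propagate geometrically to yield $L_j\leq L^j$ after possibly enlarging $L$.

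Finally, the refined constant $\tilde L_0\leq 1/(4\pi)$ for $a_0,b_0$ in $\dot H^2$ follows by direct inspection of the displayed formula: $(a_0,b_0)$ depends on $\psi$ only through the two linear functionals $I_k(\psi):=\ip{\psi,\cos^{2k}(y)\cos(x)}$, $k=1,2$. Since $\psi\in X$ is even and orthogonal to $\{1,\cos(x),\cos(y)\}$, all of its active Fourier modes satisfy $|k|^2\geq 2$, so a Poincar\'e inequality yields $\|\psi\|_{L^2}\leq\tfrac12\|\psi\|_{\dot H^2}$. Combined with the explicit $L^2$-sizes of the test functions $\cos^{2k}(y)\cos(x)$ and differentiation of the closed-form expression for $(a_0,b_0)$, this gives the claimed constant after a short calculation. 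The main technical obstacle is the combinatorial bookkeeping in the third step, namely carefully tracking the degree-five polynomial structure of $R$ through the inductive recursion so that the $L^j$ growth (rather than, say, factorial growth) is preserved at every step.
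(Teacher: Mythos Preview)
Your overall scheme---recasting \eqref{eq:first}--\eqref{eq:second} as a $2\times 2$ system with uniformly invertible matrix $M(\psi)$, matching powers of $\eps$, and controlling the growth of the coefficients via the polynomial structure of $R$---is correct and essentially equivalent to what the paper does (the paper subtracts the two equations to get a closed recursion for $B$ alone and then recovers $A$ from \eqref{eq:first}, but this is just one way of inverting your $M$).

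The genuine gap is in your last paragraph: the Poincar\'e inequality $\|\psi\|_{L^2}\le\tfrac12\|\psi\|_{\dot H^2}$ is \emph{too weak} to yield the constant $\tilde L_0\le 1/(4\pi)$. If you pair $\|\psi_1-\psi_2\|_{L^2}$ against the $L^2$-norms of the full test functions $\cos^{2k}(y)\cos(x)$ (which are $\pi\sqrt3/2$ and $\pi\sqrt{35}/8$), the resulting $\dot H^2$-Lipschitz bound for $b_0$ comes out near $0.17$, more than twice $1/(4\pi)\approx 0.08$. Even if you first use $\psi\perp\cos(x)$ to strip the constant-in-$y$ part of the test functions, the Poincar\'e factor $1/2$ still leaves you above $1/(4\pi)$. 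This matters downstream: the constant $1/(4\pi)$ is exactly what is needed in the subsequent contraction argument to produce the factor $<2/3$, so a weaker bound here does not close the proof of Proposition~\ref{prop:KolmoH2}.

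What is actually required is the full $\dot H^2$--$\dot H^{-2}$ duality. After using $\psi\perp\cos(x)$, the effective test functions are $\tfrac12\cos(2y)\cos(x)$ and $\bigl(\tfrac18\cos(4y)+\tfrac12\cos(2y)\bigr)\cos(x)$, which are supported on Fourier modes with $|k|^2\in\{5,17\}$. Their $\dot H^{-2}$ norms are therefore at most $\pi/10$ and $\pi/9$ respectively, a gain of a factor $5$ to $17$ over the uniform Poincar\'e factor $2$. Plugging these into the quotient formula for $b_0$ then gives the claimed $1/(4\pi)$. Your outline becomes correct once you replace the Poincar\'e step by this sharper duality estimate.
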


\begin{proof}
 Subtracting \eqref{eq:first} from \eqref{eq:second} we obtain the following closed form for $B(\psi;\eps)$:
 \begin{equation}\label{eq:Bitself}
 \begin{aligned}
  \left[\frac{3}{4}+\frac{3}{2\pi^2}\ip{\psi,\cos^2(y)\cos(x)}\right]B(\psi;\eps)&=-\left[\frac{1}{4}+\frac{1}{2\pi^2}\ip{\psi,\cos^4(y)\cos(x)}\right]\\
  &\quad +\frac{1}{2\pi^2}\left[-\frac{1}{\eps}\ip{R,\cos(x)}+3\eps B(\psi;\eps)+\eps\ip{\psi,\cos^5(y)}\right]\\
  &\quad +\frac{1}{2\pi^2}\ip{R,\cos(y)}.
 \end{aligned}
 \end{equation}
 Inserting here the expansion \eqref{eq:AB_exp} for $B(\psi;\eps)$ and comparing coefficients (in $\eps$) shows that $b_j(\psi)$ can be inductively defined from $\{b_{k}(\psi)\}_{j-3\leq k\leq j-1}$. In fact, the map $\{b_{k}(\psi)\}_{j-3\leq k\leq j-1}\mapsto b_j(\psi)$ is a linear map with coefficients that are uniformly bounded in $\psi\in X$. Hence for $M$ sufficiently large we have
 \begin{equation}
  \abs{b_j(\psi)}\leq M^j,
 \end{equation}
 and the series expansion for $B(\psi;\eps)$ converges for $0\leq\eps< M^{-1}$. The same holds for $A(\psi,\eps)$, since we can now simply use \eqref{eq:first} to find its expansion, e.g.\ we have
 \begin{equation}\label{eq:afromb}
  a_0(\psi)=-\frac{3}{4}b_0(\psi)-\frac{1}{8},
 \end{equation}
 and similarly for $a_j(\psi)$, $j\geq 1$, which can be inductively defined from $B(\psi;\eps)$ and $\{a_k(\psi)\}_{j-3\leq k\leq j-1}$.
 
 It remains to prove the claimed Lipschitz property. For $j\geq 1$ this follows directly from the recursive construction of the coefficients. Regarding $j=0$, we observe that by \eqref{eq:Bitself} we have 
 \begin{equation}\label{eq:b0}
  b_0(\psi)=B(\psi;0)=-\frac{1+\frac{2}{\pi^2}\ip{\psi,\cos^4(y)\cos(x)}}{3+\frac{6}{\pi^2}\ip{\psi,\cos^2(y)\cos(x)}}=:\frac{n(\psi)}{d(\psi)}.
  \end{equation}
 Towards finding the $\dot{H}^2$ Lipschitz constants, we note that since $\psi\in X$, there holds
 \begin{equation}
 \begin{aligned}
  \ip{\psi,\cos^4(y)\cos(x)}&=\ip{\psi,\left(\frac{1}{8}\cos(4y)+\frac{1}{2}\cos(2y)\right)\cos(x)},\\
  \ip{\psi,\cos^2(y)\cos(x)}&=\ip{\psi,\frac{1}{2}\cos(2y)\cos(x)}.
\end{aligned}
 \end{equation}
 Consequently we obtain
 \begin{equation}
  \abs{n(\psi_1)-n(\psi_2)}\leq\norm{\psi_1-\psi_2}_{\dot{H}^2}\norm{\left(\frac{1}{8}\cos(4y)+\frac{1}{2}\cos(2y)\right)\cos(x)}_{\dot{H}^{-2}}
 \end{equation}
 and
 \begin{equation}
  \abs{d(\psi_1)-d(\psi_2)}\leq\norm{\psi_1-\psi_2}_{\dot{H}^2}\norm{\frac{1}{2}\cos(2y)\cos(x)}_{\dot{H}^{-2}}.
 \end{equation}
 Now we compute
 \begin{equation}
  \norm{\left(\frac{1}{8}\cos(4y)+\frac{1}{2}\cos(2y)\right)\cos(x)}_{\dot{H}^{-2}}=\pi\left[17^{-2}\frac{1}{8^2}+5^{-2}\frac{1}{2^2}\right]^{1/2}\leq\frac{\pi}{9}
 \end{equation}
 and
 \begin{equation}
  \norm{\frac{1}{2}\cos(2y)\cos(x)}_{\dot{H}^{-2}}=\frac{\pi}{10}.
 \end{equation}
 The $\dot{H}^2$ (and thus also $H^2$) Lipschitz constant of $b_0$ is thus bounded by
 \begin{equation}\label{eq:kobe}
  \frac{1}{3-\frac{6}{100\pi^2}}\frac{2}{\pi^2}\frac{\pi}{9}+\left(\frac{1}{3-\frac{6}{100\pi^2}}\right)^2\left(1+\frac{1}{50\pi^2}\right)\frac{6}{\pi^2}\frac{\pi}{10}\leq\frac{1}{4\pi}.
 \end{equation}
 In view of \eqref{eq:afromb} this bound also holds for the Lipschitz constant of $a_0$.
\end{proof}

We conclude this section with some direct properties of the function $f$ thus constructed.
\begin{lemma}\label{lem:f_prop}
 Let $\psi,\psi_j\in X$, $j\in\{1,2\}$, and construct $A(\psi;\eps)$, $B(\psi;\eps)$ as in Lemma \ref{lem:AB}. Then we have for $\eps>0$ sufficiently small that
 \begin{align}
 &\abs{A(\psi;\eps)},\abs{B(\psi;\eps)}\leq 1,\label{eq:ABbound}\\
 &\abs{A(\psi_1;\eps)-A(\psi_2;\eps)},\abs{B(\psi_1;\eps)-B(\psi_2;\eps)}\leq \frac{1}{3\pi}\norm{\psi_1-\psi_2}_{\dot{H}^2},\label{eq:ABLipH2}
 \end{align}
 and
 \begin{align}
 &\norm{R(B(\psi;\eps),\psi,\eps;\cdot,\cdot)}_{L^2}\lesssim \eps^2, \label{eq:restbd}\\
 &\norm{R(B(\psi_1;\eps),\psi_1,\eps;\cdot,\cdot)-R(B(\psi_2;\eps),\psi_2,\eps;\cdot,\cdot)}_{L^2}\lesssim \eps^2\norm{\psi_1-\psi_2}_{L^2}.\label{eq:restLip}
 \end{align}
\end{lemma}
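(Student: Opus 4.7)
The proof will establish the four bounds in the order stated, using the series representations $A(\psi;\eps) = \sum_j a_j(\psi)\eps^j$, $B(\psi;\eps) = \sum_j b_j(\psi)\eps^j$ from Lemma \ref{lem:AB} together with direct estimates on the polynomial structure of $R$ in \eqref{eq:rest}.

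For \eqref{eq:ABbound}: the explicit formula \eqref{eq:b0}, combined with the constraint $\abs{\ip{\psi,\cos^2(y)\cos(x)}} + \abs{\ip{\psi,\cos^4(y)\cos(x)}} \leq \tfrac{1}{100}$ defining $X$, yields $\abs{b_0(\psi)} < 1/2$, and \eqref{eq:afromb} then gives $\abs{a_0(\psi)} \leq 1/2$. Lemma \ref{lem:AB} provides $\abs{a_j(\psi)}, \abs{b_j(\psi)} \leq M^j$ uniformly on $X$; choosing $\eps_1 > 0$ so that $\sum_{j\geq 1}(M\eps)^j \leq 1/2$ gives $\abs{A(\psi;\eps)}, \abs{B(\psi;\eps)} \leq 1$. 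For the Lipschitz estimate \eqref{eq:ABLipH2}, Lemma \ref{lem:AB} already records the $\dot{H}^2$-Lipschitz constant $\tfrac{1}{4\pi}$ for $a_0$ and $b_0$, while the higher-order tails contribute at most $\sum_{j \geq 1}(L\eps)^j \norm{\psi_1 - \psi_2}_{L^2} \leq \sum_{j\geq 1}(L\eps)^j \norm{\psi_1 - \psi_2}_{H^2}$. For $\eps_1$ sufficiently small this tail is absorbed into the remaining budget $\tfrac{1}{3\pi} - \tfrac{1}{4\pi} = \tfrac{1}{12\pi}$, proving \eqref{eq:ABLipH2}.

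For \eqref{eq:restbd}, each summand in \eqref{eq:rest} carries an explicit prefactor $\eps^k$ with $k \geq 2$, while the remaining polynomial factors are uniformly bounded in $L^2$ via $\norm{\psi}_{H^2} \leq 10$, the embedding $H^2(\T^2) \hookrightarrow L^\infty$, and $\abs{B} \leq 1$ from \eqref{eq:ABbound}; the higher powers $\eps^3, \eps^4, \eps^5$ are absorbed into $\eps^2$ by taking $\eps_1$ small. For \eqref{eq:restLip}, I expand the difference $R(B_1,\psi_1,\eps) - R(B_2,\psi_2,\eps)$ via the identity $a^k - b^k = (a-b)(a^{k-1}+\cdots+b^{k-1})$ applied to the polynomial parts in $\psi + \cos(x)$, and split off the $B_1 - B_2$ contributions separately. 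The polynomial differences give $\norm{\psi_1 - \psi_2}_{L^2}$ multiplied by $L^\infty$-bounded factors; the $B$-differences require an $L^2$-Lipschitz bound for $B$, which is obtained by observing that \eqref{eq:b0} expresses $b_0(\psi)$ as a rational function of two $L^2$ inner products with bounded test functions, with denominator bounded away from zero on $X$, hence $L^2$-Lipschitz. Combined with the $L^2$-Lipschitz bounds on $b_j$ for $j \geq 1$ from Lemma \ref{lem:AB} and the overall $\eps^2$ prefactor, the claim follows.

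The main technical point is in \eqref{eq:restLip}: one must ensure that the $B$-dependence of $R$ contributes only an $L^2$-Lipschitz bound proportional to $\norm{\psi_1 - \psi_2}_{L^2}$, rather than an $\dot{H}^2$-Lipschitz bound, so that the final estimate retains the correct form with $L^2$ on the right-hand side. This requires the observation that $b_0$ is in fact $L^2$-Lipschitz, a statement slightly stronger than the $\dot{H}^2$-Lipschitz bound recorded in Lemma \ref{lem:AB}, but one that follows by direct inspection of \eqref{eq:b0}.
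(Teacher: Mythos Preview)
Your proof is correct and takes essentially the same approach as the paper's own (very brief) argument, which simply cites the power-series construction for \eqref{eq:ABbound}, the zero-order Lipschitz bounds from Lemma~\ref{lem:AB} for \eqref{eq:ABLipH2}, and ``construction'' for \eqref{eq:restbd}--\eqref{eq:restLip}; you supply considerably more detail. One small redundancy: the $L^2$-Lipschitz property of $b_0$ that you re-derive for \eqref{eq:restLip} is already recorded in Lemma~\ref{lem:AB} (the case $j=0$ of the $L^2$-Lipschitz statement there).
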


\begin{proof}
 The bounds \eqref{eq:ABbound} follow directly from the power series construction of $A$ and $B$. The estimate
 \eqref{eq:ABLipH2} follows from the expansion \eqref{eq:AB_exp} and the bounds for the Lipschitz constants of the zero order terms $a_0(\psi),b_0(\psi)$ in Lemma \ref{lem:AB}.
 Similarly, \eqref{eq:restbd} and \eqref{eq:restLip} follow directly from construction.
\end{proof}

\subsubsection{Contraction Argument}\label{ssec:Kolmo_contr}
We will now construct solutions of \eqref{eq:base} as contractions in $X$. To this end, define
\begin{equation}
 K_\eps:X\to H^2,\quad \psi\mapsto \left[(x,y)\mapsto (1+\Delta)^{-1}f(A(\psi;\eps),B(\psi,\eps);\cos(y)+\eps\cos(x)+\eps\psi)\right],
\end{equation}
where $A(\psi;\eps)$ and $B(\psi;\eps)$ are constructed as in Lemma \ref{lem:AB}

\begin{proposition}\label{prop:contract}
For $\eps>0$ small enough, $K_\eps$ defines a contraction on $(X,\norm{\cdot}_{H^2})$.
\end{proposition}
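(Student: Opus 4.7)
The plan is to equip $X$ with the $H^2$-metric (under which $X$ is a closed subset of $H^2(\T^2)$) and apply Banach's fixed point theorem: I need to verify (i) $K_\eps(X)\subset X$ and (ii) $K_\eps$ is Lipschitz on $X$ with constant strictly below $1$, both for $\eps$ small. First, for $\psi\in X$, the compatibility conditions \eqref{eq:first}--\eqref{eq:second} that determined $A(\psi;\eps)$ and $B(\psi;\eps)$ ensure that $f(A,B;\cos y+\eps\cos x+\eps\psi)$ is orthogonal to $\cos y$ and $\cos x$, while the joint $x,y$-evenness of $\psi$ forces $f$ to be even in both variables and hence orthogonal to $\sin x, \sin y$ too. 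Thus $f\in\ker(1+\Delta)^\perp$, and $(1+\Delta)^{-1}f$ is a well-defined $H^2$-function with norm $\lesssim \|f\|_{L^2}$, since $|1-|k|^2|\geq 1$ on $\ker^\perp$.

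For the invariance $K_\eps(X)\subset X$, evenness and orthogonality to $\cos x,\cos y$ come for free. To control both $\|K_\eps\psi\|_{H^2}\leq 10$ and the inner-product constraints, I would exploit that at $\eps=0$ the map collapses explicitly: combining \eqref{eq:afromb} with $\cos^3 y=\tfrac{3}{4}\cos y+\tfrac{1}{4}\cos 3y$ gives
\begin{equation*}
K_0\psi=(1+\Delta)^{-1}\left[\Bigl(\tfrac{b_0(\psi)}{4}+\tfrac{1}{16}\Bigr)\cos 3y+\tfrac{1}{80}\cos 5y\right],
\end{equation*}
a $y$-only function of small and uniformly controlled $H^2$-norm, automatically orthogonal to every product $\cos^k y\cos x$. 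For $\eps>0$ small, Lemma \ref{lem:f_prop} yields $\|K_\eps\psi-K_0\psi\|_{H^2}=O(\eps)$ uniformly in $\psi\in X$, which secures both constraints.

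The core of the argument is the contractivity. Writing $f_i:=f(A_i,B_i;g_i)$ with $A_i=A(\psi_i;\eps)$, $B_i=B(\psi_i;\eps)$, and $g_i=\cos y+\eps\cos x+\eps\psi_i$, I decompose
\begin{equation*}
f_1-f_2=\Phi+\eps\,\mathcal{R},
\end{equation*}
where $\Phi$ isolates the $\eps^0$-contribution arising from the variation of $A, B$:
\begin{equation*}
\Phi=[a_0(\psi_1)-a_0(\psi_2)]\cos y+[b_0(\psi_1)-b_0(\psi_2)]\cos^3 y=\frac{b_0(\psi_1)-b_0(\psi_2)}{4}\cos 3y,
\end{equation*}
the final simplification being once more a consequence of \eqref{eq:afromb}. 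The remainder $\eps\mathcal{R}$ gathers (a) the higher-order coefficients $a_j, b_j$ ($j\geq 1$) in the expansions of $A, B$, (b) the contributions involving $g_1-g_2=\eps(\psi_1-\psi_2)$ coming from variation of $f$ in its last slot, and (c) the $O(\eps)$ corrections of $g_i^k$ around $\cos^k y$; each is bounded in $L^2$ by a uniform constant times $\|\psi_1-\psi_2\|_{H^2}$ via Lemma \ref{lem:f_prop}. Using $(1+\Delta)^{-1}\cos 3y=-\tfrac{1}{8}\cos 3y$ together with the Lipschitz bound $|b_0(\psi_1)-b_0(\psi_2)|\leq\tfrac{1}{4\pi}\|\psi_1-\psi_2\|_{\dot{H}^2}$ from \eqref{eq:kobe}, a direct computation yields
\begin{equation*}
\|(1+\Delta)^{-1}\Phi\|_{H^2}\leq \kappa_0\|\psi_1-\psi_2\|_{H^2}, \qquad \kappa_0<1,
\end{equation*}
while $\eps\|(1+\Delta)^{-1}\mathcal{R}\|_{H^2}=O(\eps)\|\psi_1-\psi_2\|_{H^2}$. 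Shrinking $\eps$ if necessary gives a contraction constant strictly below $1$.

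The principal obstacle is precisely the handling of $\Phi$: a naive triangle-inequality estimate $\|f_1-f_2\|_{L^2}\lesssim |A_1-A_2|\|g_1\|_{L^2}+|B_1-B_2|\|g_1^3\|_{L^2}+\ldots$ is too crude, since $\|g_1\|_{L^2}$ is of order $\pi\sqrt{2}$, and even applying Lemma \ref{lem:f_prop} one cannot push the resulting constant below $1$ after multiplying by $(1+\Delta)^{-1}$. The saving observation, which is what makes the construction work at all, is that $a_0$ and $b_0$ were designed exactly so that the resonant $\cos y$ part of $\Phi$ cancels, collapsing the main term to the single high mode $\cos 3y$ on which $(1+\Delta)^{-1}$ is a genuine contraction with multiplier $-1/8$. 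Making this resonant cancellation explicit is what delivers the final contraction constant.
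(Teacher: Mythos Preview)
Your argument is correct and follows the same architecture as the paper's proof: verify $K_\eps(X)\subset X$ via the $\eps=0$ limit plus an $O(\eps)$ perturbation, then split $f_1-f_2$ into a leading $\eps^0$-piece and an $O(\eps)$ remainder controlled through Lemma~\ref{lem:f_prop}. The one genuine difference is how you treat the leading piece. You exploit the relation \eqref{eq:afromb} to collapse
\[
 (a_0(\psi_1)-a_0(\psi_2))\cos y+(b_0(\psi_1)-b_0(\psi_2))\cos^3 y=\tfrac{1}{4}\bigl(b_0(\psi_1)-b_0(\psi_2)\bigr)\cos 3y,
\]
and then use that $(1+\Delta)^{-1}$ acts as $-\tfrac18$ on this single mode. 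The paper instead bounds the two-term expression directly in $L^2$ via the Lipschitz constant $\tfrac{1}{4\pi}$ from Lemma~\ref{lem:AB}, obtaining
\[
 \tfrac{1}{4\pi}\bigl(\norm{\cos y}_{L^2}+\norm{\cos^3 y}_{L^2}\bigr)=\tfrac{\sqrt{2}}{4}\bigl(1+\sqrt{5/8}\bigr)<\tfrac{2}{3},
\]
and passes to $H^2$ using $\norm{(1+\Delta)^{-1}g}_{H^2}\leq\norm{g}_{L^2}$ on $\ker(1+\Delta)^\perp$.

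Your commentary on the ``principal obstacle'' is therefore not accurate: the naive triangle-inequality bound, with either the $\tfrac{1}{4\pi}$ constant of Lemma~\ref{lem:AB} or even the $\tfrac{1}{3\pi}$ constant of Lemma~\ref{lem:f_prop}, already yields a contraction constant strictly below $1$ (respectively $\approx 0.63$ and $\approx 0.84$). The resonant cancellation you isolate is real and elegant---it is of course what guarantees $\Phi\perp\ker(1+\Delta)$ in the first place, and it delivers a noticeably smaller constant---but it is not needed to close the argument. What your route buys is a cleaner and more robust estimate that does not rely on the numerical margin in the direct computation; what the paper's route buys is that one never needs to invoke \eqref{eq:afromb} explicitly in the contraction step.
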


\begin{proof}
Let $0<\eps<\eps_1$, so that by Lemma \ref{lem:AB} the coefficients $A(\psi;\eps)$ and $B(\psi;\eps)$ are well-defined. 

First we show that $K_\eps$ maps $X$ into itself. By construction it is clear that if $\psi$ is even in $x$ and $y$ (separately), then so is $K_\eps(\psi)$. Due to the smoothing property of $(\Delta +1)$ and the fact that $H^2$ forms an algebra, it suffices to prove $H^2\to L^2$ bounds on
\begin{equation}
 \widetilde{K_\eps}:\psi\mapsto f(A(\psi;\eps),B(\psi,\eps);\cos(y)+\eps\cos(x)+\eps\psi).
\end{equation}
Since
\begin{align}
 \abs{f(A(\psi;\eps),B(\psi;\eps); \cos(y)+\eps \cos(x)+\eps\psi)}\leq 1+C\eps\abs{\psi}^5
\end{align}
we find
\begin{align} 
 \norm{f(A(\psi;\eps),B(\psi;\eps); \cos(y)+\eps \cos(x)+\eps\psi)}_{L^2}\leq 10,
\end{align}
so that
\begin{align} 
\norm{K_\eps(\psi)}_{H^2}\leq \norm{\widetilde{K_\eps}(\psi)}_{L^2} \leq 10.
\end{align}
Moreover, one computes directly that 
\begin{equation}
 \abs{\ip{K_\eps(\psi),\cos^2(y)\cos(x)}}+\abs{\ip{K_\eps(\psi),\cos^4(y)\cos(x)}}\lesssim \eps,
\end{equation}
and thus for $\eps$ small enough $K_\eps(X)\subset X$.

To show that $K_\eps$ defines a contraction, let $\psi_j\in X$, $j\in\{1,2\}$, and define
\begin{align}
 G_j=\cos(y)+\eps\cos(x)+\eps\psi_j, \quad \text{ with }\norm{G_j}_{L^2}\leq 2.
\end{align}
Then
\begin{equation}
\begin{aligned}
 \widetilde{K_\eps}(\psi_1)-\widetilde{K_\eps}(\psi_2)&=f(A(\psi_1;\eps),B_\eps(\psi_1);G_1))-f(A(\psi_2;\eps),B(\psi_2;\eps);G_2))\\
 &=(A(\psi_1;\eps)-A(\psi_2;\eps))G_1+ \eps A(\psi_2;\eps) (\psi_1-\psi_2) \\
 &\qquad +(B(\psi_1;\eps)-B(\psi_2;\eps)) G_1^3+ \eps B(\psi_2;\eps) (\psi_1-\psi_2)\left[G_1^2 +G_1G_2+G_2^2\right]\\
 &\qquad+ \frac{\eps}{5} (\psi_1-\psi_2)\left[G_1^4 + G_1^3 G_2 + G_1^2 G_2^2 + G_1 G_2^3 + G_2^4\right].
\end{aligned} 
\end{equation}
Up to terms of order $\eps$ we then have to bound
\begin{equation}
\begin{aligned}
 &\norm{(a_0(\psi_1)-a_0(\psi_2))\cos(y)+(b_0(\psi_1)-b_0(\psi_2))\cos^3(y)}_{L^2}\\
 &\qquad\leq\frac{1}{4\pi}\left[\norm{\cos(y)}_{L^2}+\norm{\cos^3(y)}_{L^2}\right]\norm{\psi_1-\psi_2}_{\dot{H}^2}\\
 &\qquad =\frac{\sqrt{2}}{4}\left[1+\sqrt{\frac{5}{8}}\right]\norm{\psi_1-\psi_2}_{\dot{H}^2}\\
 &\qquad <\frac{2}{3}\norm{\psi_1-\psi_2}_{\dot{H}^2}\leq \frac{2}{3}\norm{\psi_1-\psi_2}_{H^2},
\end{aligned} 
\end{equation}
thanks to Lemma \ref{lem:AB}. This shows that 
\begin{equation}
 \norm{K_\eps(\psi_1)-K_\eps(\psi_2)}_{H^2}\leq \norm{\widetilde{K_\eps}(\psi_1)-\widetilde{K_\eps}(\psi_2)}_{L^2}\leq \left(\frac{2}{3}+O(\eps)\right)\norm{\psi_1-\psi_2}_{H^2},
\end{equation}
and for $\eps>0$ sufficiently small we thus obtain a contraction.
\end{proof}

\subsection{Non-triviality of the Stationary Modes}\label{ssec:Kolmo_nontriv}
Now for given $\eps>0$, let $\psi_\eps\in X$ be the fixed point of  $K_\eps$ in $X$, well-defined by virtue of Proposition \ref{prop:contract}. We conclude the proof of Proposition \ref{prop:KolmoH2} by demonstrating that $\psi_\eps$ has nontrivial $x$ modes, in the sense that the associated flows are not in the kernel of the linearization $\Lin_K$.
\begin{lemma}\label{lem:psi_eps_exp}
 For sufficiently small $\eps>0$ as in Proposition \ref{prop:contract}, let $\psi_\eps$ be the fixed point of $K_\eps$. Then $\psi_\eps$ has the expansion
 \begin{equation}\label{eq:psi_eps_exp}
 \begin{aligned}
  \psi_\eps&=c_0\cos(3y)-c_1\cos(5y)\\
  &\quad +\eps\left[-c_2\cos(x)\cos(4y)-\frac{1}{32}b_1\cos(3y)-c_3\cos(7y)+c_4\cos(9y)\right]\\
  &\quad +O(\eps^2),
 \end{aligned} 
 \end{equation}
with coefficients that can be explicitly computed as $(c_0,c_1,c_2,c_3,c_4)=(\frac{1}{384},\frac{1}{1920},\frac{1}{128},-\frac{c_0}{768},\frac{c_1}{1280})$ and $b_1=-\frac{7}{7680}$. 
\end{lemma}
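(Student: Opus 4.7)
The strategy is to derive \eqref{eq:psi_eps_exp} by expanding the fixed-point equation $\psi_\eps = K_\eps(\psi_\eps)$ perturbatively in $\eps$. Write $\psi_\eps = \psi_0 + \eps\psi_1 + O(\eps^2)$, and denote by $\bar{a}_j, \bar{b}_j$ the total $\eps^j$-coefficients in the expansions of $A(\psi_\eps;\eps)$ and $B(\psi_\eps;\eps)$ (which combine both $a_j(\psi_0), b_j(\psi_0)$ from Lemma \ref{lem:AB} and the contributions from differentiating the lower-order $a_i, b_i$ against the expansion of $\psi_\eps$). Substituting into the base equation \eqref{eq:base} yields a sequence of linear elliptic equations for $\psi_0, \psi_1, \ldots$ that I would solve mode by mode, using the fact that $(1+\Delta)$ acts diagonally in the Fourier basis with eigenvalue $1-k^2-\ell^2$ on the mode $\cos(kx)\cos(\ell y)$.

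At order $\eps^0$, the equation reads $(1+\Delta)\psi_0 = \bar{a}_0\cos(y) + \bar{b}_0\cos^3(y) + \tfrac{1}{5}\cos^5(y)$. Using the elementary identities $\cos^3(y) = \tfrac{3}{4}\cos(y) + \tfrac{1}{4}\cos(3y)$ and $\cos^5(y) = \tfrac{5}{8}\cos(y) + \tfrac{5}{16}\cos(3y) + \tfrac{1}{16}\cos(5y)$, the formula \eqref{eq:b0} evaluated at $\psi_0$ (where both pairings vanish since $\psi_0$ is $x$-independent) combined with \eqref{eq:afromb} immediately gives $\bar{b}_0 = -\tfrac{1}{3}$ and $\bar{a}_0 = \tfrac{1}{8}$. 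Inverting $1+\Delta$ on the surviving $\cos(3y)$ and $\cos(5y)$ modes (with eigenvalues $-8$ and $-24$) then produces $c_0 = \tfrac{1}{384}$ and $c_1 = \tfrac{1}{1920}$.

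The decisive simplification at the next order is the elementary identity
\[
\bar{a}_0 + 3\bar{b}_0\cos^2(y) + \cos^4(y) = \tfrac{1}{8}\cos(4y),
\]
which reduces the equation for $\psi_1$ to
\[
(1+\Delta)\psi_1 = \bar{a}_1\cos(y) + \bar{b}_1\cos^3(y) + \tfrac{1}{8}\bigl(\psi_0+\cos(x)\bigr)\cos(4y).
\]
Expanding the last term via the product-to-sum identities $\cos(4y)\cos(3y) = \tfrac{1}{2}(\cos(y) + \cos(7y))$ and $\cos(4y)\cos(5y) = \tfrac{1}{2}(\cos(y) + \cos(9y))$, and then inverting $1+\Delta$ on each of the modes $\cos(x)\cos(4y), \cos(3y), \cos(7y), \cos(9y)$ (with eigenvalues $-16, -8, -48, -80$), immediately yields $c_2 = \tfrac{1}{128}$ together with the stated values of $c_3$ and $c_4$.

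The main obstacle is the explicit determination of $b_1 = \bar{b}_1 = -\tfrac{7}{7680}$. The subtlety is that $\bar{b}_1$ is not simply $b_1(\psi_0)$: it also picks up the linearization term $D_\psi b_0(\psi_0)[\psi_1]$, which depends on $\ip{\psi_1, \cos^4(y)\cos(x)}$ and hence couples back through the just-computed coefficient $c_2$. I would resolve this by expanding both solvability conditions \eqref{eq:first} and \eqref{eq:second} to order $\eps^1$, obtaining a $2\times 2$ linear system for $(\bar{a}_1, \bar{b}_1)$. The necessary inputs, namely $\ip{\psi_0, \cos^3(y)}$, $\ip{\psi_0, \cos^5(y)}$, and the leading-order pairing of the remainder $R$ with $\cos(x)$ at $(\psi_0, \bar{b}_0)$, are all directly computable from the explicit form of $\psi_0$. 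Subtracting the two resulting equations and placing everything over the common denominator $7680$ then yields the stated value of $b_1$, completing the expansion.
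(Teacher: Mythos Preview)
Your proposal is correct and follows essentially the same approach as the paper's proof: both expand the fixed-point equation \eqref{eq:base} order by order in $\eps$, use $a_0+\tfrac{3}{4}b_0+\tfrac{1}{8}=0$ (equivalently your identity $\bar a_0+3\bar b_0\cos^2 y+\cos^4 y=\tfrac{1}{8}\cos 4y$) to simplify the first-order right-hand side, and then invert $1+\Delta$ mode by mode to read off the coefficients. Your added remark that the effective first-order coefficient $\bar b_1$ combines $b_1(\psi_0)$ with the linearization $D_\psi b_0(\psi_0)[\psi_1]$ through the already-determined $c_2$ is a correct clarification of what the paper leaves implicit when it says ``$a_1,b_1$ can be computed explicitly through the equations \eqref{eq:first}, \eqref{eq:second}''.
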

In particular, from \eqref{eq:psi_eps_exp} we directly have
\begin{equation}
 \ip{\psi_\eps,\cos(kx)\cos(ly)}=\begin{cases}-\eps\frac{\pi^2}{128}+O(\eps^2), &(k,l)=(1,4),\\ O(\eps^2), &\text{else, with }k\neq 0, l\neq 3,5,\end{cases}
\end{equation}
from which \eqref{eq:catseye} follows: it suffices to recall that $\Psi_\eps=\cos(y)+\eps\cos(x)+\eps\psi_\eps$.

\begin{proof}[Proof of Lemma \ref{lem:psi_eps_exp}]
 We recall that for a given $\eps>0$, by construction the function $\psi_\eps$ satisfies the identity \eqref{eq:base}, which we expand and restate here for convenience:
 \begin{equation}\label{eq:base_exp}
 \begin{aligned}
  \Delta\psi_\eps+\psi_\eps&=\cos(y)\left[A+\frac{3}{4}B+\frac{1}{8}\right]+\cos(3y)\left[\frac{1}{4}B+\frac{1}{16}\right]+\cos(5y)\left[\frac{1}{80}\right]\\
  &\quad +\eps(\psi_\eps+\cos(x))\left[(A+\frac{3}{2}B+\frac{3}{8})+\cos(2y)(\frac{3}{2}B+\frac{1}{2})+\cos(4y)\frac{1}{8}\right]\\
  &\quad +O(\eps^2).
 \end{aligned} 
 \end{equation}
 Here the coefficients $A=A(\psi_\eps;\eps),B=B(\psi_\eps;\eps)$ are \emph{fixed} and given explicitly by solving the system \eqref{eq:first}, \eqref{eq:second}.
 
 Via the relation 
 \begin{equation}\label{eq:inv_numbers}
  \ip{(1+\Delta)\psi_\eps,\cos(kx)\cos(ly)}=(1-k^2-l^2)\ip{\psi_\eps,\cos(kx)\cos(ly)},
 \end{equation}
 we may thus successively determine an expansion of $\psi_\eps$ by testing \eqref{eq:base_exp} with $\cos(kx)\cos(ly)$.
 
 Since by \eqref{eq:afromb} and \eqref{eq:b0} we have $a_0(\psi_\eps)=\frac{1}{8}+O(\eps)$ and $b_0(\psi_\eps)=-\frac{1}{3}+O(\eps)$ with $a_0(\psi_\eps)+\frac{3}{4}b_0(\psi_\eps)+\frac{1}{8}=0$, this yields
 \begin{equation}\label{eq:base_exp2}
 \begin{aligned}
  \Delta\psi_\eps+\psi_\eps&=-\frac{1}{48}\cos(3y)+\frac{1}{80}\cos(5y)\\
  &\quad +\eps(\psi_\eps|_{\eps=0}+\cos(x))\frac{1}{8}\cos(4y)+\eps\cos(y)\left[a_1+\frac{3}{4}b_1\right]+\eps\cos(3y)\left[\frac{1}{4}b_1\right]\\
  &\quad +O(\eps^2),
 \end{aligned} 
 \end{equation}
 where $a_1,b_1$ can be computed explicitly through the equations \eqref{eq:first}, \eqref{eq:second}. Hence from \eqref{eq:inv_numbers} it follows that $\psi_\eps|_{\eps=0}=c_0\cos(3y)-c_1\cos(5y)$ with $c_0=\frac{1}{384}$ and $c_1=\frac{1}{1920}$. Reinserting this into the second line of \eqref{eq:base_exp2} and computing the terms of order $\eps$ then yields the claim \eqref{eq:psi_eps_exp}.
\end{proof}

\subsection{Proof of Proposition \ref{prop:elliptic} -- Elliptic Regularity}\label{ssec:elliptic}
In this section we prove Proposition \ref{prop:elliptic}. Since $\Psi_\eps-\cos(y)=\eps\cos(x)+\eps\psi_\eps$, it is enough to show that in fact our fixed point $\psi_\eps\in C^\omega(\T^2)$ is an analytic function with uniform in $\eps$ bounded Gevrey-$1$ norm. This is the content of the following corollary.

\begin{corollary}\label{cor:analytic}
 For $\eps>0$ sufficiently small as in Proposition \ref{prop:contract}, let $\psi_\eps\in H^2(\T^2)$ be the fixed point of $K_\eps$. Then we have that $\psi_\eps\in C^\omega(\T^2)$ and there exist constants $\lambda>0$ and $M>0$, both independent of $\eps>0$, such that
 \begin{equation}
  \norm{\psi_\eps}_{\mathcal{G}^{\lambda}}\leq M.
 \end{equation}
\end{corollary}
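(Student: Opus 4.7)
The plan is to upgrade the regularity of the $H^2$ fixed point $\psi_\eps$ by exhibiting a Gevrey-bounded, $H^2$-closed, $K_\eps$-invariant subset of $X$; uniqueness of the $H^2$ fixed point (Proposition \ref{prop:contract}) then forces $\psi_\eps$ into this subset. To implement this, fix $s>1$ and consider the Sobolev-corrected Gevrey space
\begin{equation}
\mathcal{Z}^\lambda := \left\{\psi\in L^2(\T^2) : \norm{\psi}_{\mathcal{Z}^\lambda} := \norm{(1+|D|)^s\,\e^{\lambda|D|}\psi}_{L^2} < \infty\right\},
\end{equation}
which continuously embeds into $\mathcal{G}^\lambda$ and, for $s>1$, forms a Banach algebra by a standard Young convolution argument exploiting $\e^{\lambda|k|}\leq\e^{\lambda|j|}\e^{\lambda|k-j|}$ applied to the weighted sequences $(1+|k|)^s\e^{\lambda|k|}|\hat{\psi}(k)|$. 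For parameters $\lambda,M>0$ to be chosen, let
\begin{equation}
X_{\lambda, M} := X \cap \{\psi\in L^2(\T^2) : \norm{\psi}_{\mathcal{Z}^\lambda}\leq M\}.
\end{equation}

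The key step is invariance: for appropriately chosen $\lambda, M>0$ independent of $\eps$, one has $K_\eps(X_{\lambda,M}) \subset X_{\lambda,M}$. Since $f$ is a quintic polynomial whose coefficients $A(\psi;\eps), B(\psi;\eps)$ are uniformly bounded on $X$ by Lemma \ref{lem:f_prop}, and $\cos(x), \cos(y)$ are supported on Fourier modes $|k|=1$ with $\norm{\cos(x)}_{\mathcal{Z}^\lambda}+\norm{\cos(y)}_{\mathcal{Z}^\lambda} \leq C_1 \e^\lambda$, the algebra property of $\mathcal{Z}^\lambda$ yields, for $\psi \in X_{\lambda, M}$,
\begin{equation}
\norm{f(A,B;\cos(y)+\eps\cos(x)+\eps\psi)}_{\mathcal{Z}^\lambda} \leq C_2 \sum_{k=1}^5 \left(C_1 \e^\lambda + \eps M\right)^k.
\end{equation}
Coupled with the fact that $(\Delta+1)^{-1}$ has operator norm at most $1$ on the orthogonal complement of $\ker(\Delta+1)$ in $\mathcal{Z}^\lambda$ (since every non-zero eigenvalue $1-k^2-l^2$ satisfies $|1-k^2-l^2|\geq 1$ there), this gives $\norm{K_\eps(\psi)}_{\mathcal{Z}^\lambda}\leq M$, provided one first fixes $\lambda>0$ small, then chooses $M=M(\lambda)$ large enough to absorb the $\lambda$-dependent constants, and finally reduces $\eps_0$ so that $\eps_0 M\ll 1$.

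To conclude, I start the iteration from $\psi^{(0)}\equiv 0 \in X_{\lambda,M}$; by invariance, the iterates $\psi^{(n)}:=K_\eps(\psi^{(n-1)})$ all lie in $X_{\lambda,M}$, and by the $H^2$-contraction of Proposition \ref{prop:contract} they converge in $H^2$ to $\psi_\eps$. Extracting a weak-$\ast$ subsequential limit in $\mathcal{Z}^\lambda$ via Banach--Alaoglu and identifying it with the $H^2$ limit $\psi_\eps$, the weak lower semicontinuity of $\norm{\cdot}_{\mathcal{Z}^\lambda}$ yields $\norm{\psi_\eps}_{\mathcal{Z}^\lambda}\leq M$ and hence $\norm{\psi_\eps}_{\mathcal{G}^\lambda}\leq M$. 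The main technical point is precisely the sequential choice of parameters in the invariance step, which is required because a direct contraction in $\mathcal{Z}^\lambda$ would be hindered by the fact that the Lipschitz bounds on $A,B$ in Lemma \ref{lem:AB} are formulated in the weaker $H^2$ norm, and converting these to Gevrey Lipschitz bounds incurs the large embedding constant of $\mathcal{Z}^\lambda\hookrightarrow H^2$.
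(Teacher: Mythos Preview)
Your argument is correct but proceeds along a genuinely different route from the paper. The paper deduces the corollary from a general elliptic regularity lemma (Lemma~\ref{lem:analytic}): once the $H^2$ solution $\psi_\eps$ of $(\Delta+1)\psi_\eps=\sum_k a_k\psi_\eps^k$ is in hand, one differentiates the equation and proves by induction on the derivative order that $\|\partial^\alpha\psi_\eps\|_{H^2}\leq C_\ast R^{|\alpha|}\alpha!/((\alpha_1+1)^2(\alpha_2+1)^2)$, using only the $H^2$ algebra property and the ellipticity gain \eqref{eq:L_gain}. Your approach instead exploits the fixed-point structure directly: you exhibit a Gevrey ball $X_{\lambda,M}$ invariant under $K_\eps$, run the Picard iteration from $0$ inside it, and pass to the $H^2$ limit via weak-$\ast$ compactness. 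This is more streamlined for the problem at hand, since it avoids the somewhat delicate combinatorial induction of Lemma~\ref{lem:analytic} and makes transparent why the polynomial form of $f$ (together with the $\mathcal{Z}^\lambda$ algebra property) suffices. On the other hand, the paper's route is more portable---Lemma~\ref{lem:analytic} applies to any $H^2$ solution of a semilinear elliptic equation with analytic coefficients, with no need for an underlying contraction---and it does not require the further reduction of $\eps_0$ (to ensure $\eps_0 M\ll 1$) that your invariance step needs; in the paper's argument the analyticity and the uniform Gevrey bound follow for every $\eps$ at which the $H^2$ fixed point exists.
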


The key point here is that $\psi_\eps$ satisfies the equation \eqref{eq:psi_elliptic}, which is an elliptic, semilinear equation with analytic coefficients. Our Corollary \ref{cor:analytic} is then a direct consequence of the following slightly more general lemma regarding ``elliptic regularity''.
\begin{lemma}\label{lem:analytic}
Let $\mathscr{L}$ be a linear, constant coefficient partial differential operator, for which there exists a constant $C_{\mathscr{L}}>0$ such that 
for $f\in L^2$, $f\not\in\ker \mathscr{L}$, there holds
\begin{equation}\label{eq:L_gain}
 C_{\mathscr{L}}\norm{\mathscr{L} f}_{L^2}\geq\norm{f}_{\dot{H}^{2}},
\end{equation}
and let $a_k\in C^\omega(\T^2)$, $0\leq k\leq n$, be analytic functions, with $C_a>0$ such that for all $\ell\in\N_0$
\begin{equation}\label{eq:coeff_bd}
 \norm{a_k}_{H^\ell}\leq (C_a)^\ell\cdot \ell!,\quad 0\leq k\leq n.
\end{equation}
If $\varphi\in H^2(\T^2)$ solves the semilinear partial differential equation 
\begin{equation}\label{eq:proto_pde}
 \mathscr{L}\varphi=\sum_{k=0}^n a_k\varphi^k, 
\end{equation}
then $\varphi\in C^\omega(\T^2)$, and there exist constants $\lambda>0$ and $C_\ast>0$, depending only on $C_a$ and $C_\mathscr{L}$, such that
\begin{equation}
 \norm{\varphi}_{\mathcal{G}^\lambda}\leq 2C_\ast.
\end{equation}
\end{lemma}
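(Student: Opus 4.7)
The plan is to prove the Sobolev bound $\|\varphi\|_{H^\ell} \leq M R^\ell \ell!$ for all $\ell \geq 0$, with $M, R > 0$ depending only on $C_{\mathscr{L}}$, $C_a$, $n$, and $\|\varphi\|_{H^2}$. By the standard equivalence
\begin{equation*}
 \|f\|_{\mathcal{G}^\lambda}^2 \sim \sum_{\ell \geq 0}\frac{\lambda^{2\ell}}{(\ell!)^2}\|f\|_{\dot H^\ell}^2,
\end{equation*}
such a bound translates directly into $\|\varphi\|_{\mathcal{G}^\lambda}\leq 2C_\ast$ for any $\lambda < 1/R$.

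As a preliminary, I would first establish that $\varphi \in C^\infty(\T^2)$ via a classical bootstrap. Since $H^2(\T^2) \hookrightarrow L^\infty$ and $H^2(\T^2)$ is a Banach algebra, the right-hand side $\sum_{k=0}^n a_k \varphi^k$ lies in $H^2$. Applying \eqref{eq:L_gain} to $(I-\Pi)\varphi$, where $\Pi$ is the orthogonal projection onto $\ker\mathscr{L}$ (a finite-dimensional space of Fourier modes killed by the symbol, hence a smoothing operator), upgrades $\varphi$ to $H^4$. Iterating yields $\varphi \in H^\ell$ for every $\ell\in\N$.

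The heart of the proof is the quantitative induction on $\ell$. Using that $\mathscr{L}$ has constant coefficients (hence commutes with $\partial^\alpha$), for $|\alpha|=m$ one has $\mathscr{L}\partial^\alpha\varphi = \partial^\alpha(\sum_k a_k\varphi^k)$. Applying \eqref{eq:L_gain} to $(I-\Pi)\partial^\alpha\varphi$ and summing over $|\alpha|=m$ gives
\begin{equation*}
\|\varphi\|_{\dot H^{m+2}} \leq C_{\mathscr{L}} \Bigl\|\sum_{k=0}^n a_k\varphi^k\Bigr\|_{H^m} + C_\Pi\|\varphi\|_{L^2}.
\end{equation*}
I would then invoke the $2$D ``analytic'' product estimate (a consequence of $H^s\hookrightarrow L^\infty$ for any fixed $s>1$):
\begin{equation*}
 \|uv\|_{H^\ell} \leq C_s \sum_{j=0}^\ell \binom{\ell}{j}\|u\|_{H^{j+s}}\|v\|_{H^{\ell-j+s}}.
\end{equation*}
Iterating this $k$ times on $a_k\varphi^k$ and inserting the analytic bound \eqref{eq:coeff_bd} for $a_k$ together with the inductive hypothesis $\|\varphi\|_{H^j}\leq MR^j j!$ for $j\leq m+1$ produces a sum over $j_0+j_1+\cdots+j_k = m$ of terms of the form
\begin{equation*}
 \binom{m}{j_0,j_1,\ldots,j_k}C_a^{j_0+s}(j_0+s)!\prod_{i=1}^k MR^{j_i+s}(j_i+s)!.
\end{equation*}
The multinomial identity $\binom{m}{j_0,\ldots,j_k}\prod_i j_i! = m!$ collapses this to a bound of the shape $\widetilde C\,M^k R^m m!\,p(m)$, where $p(m)$ is polynomial in $m$ coming from the $s$-shift.

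Summing over $k=0,\ldots,n$, one arrives at $\|\varphi\|_{\dot H^{m+2}} \leq C(1+M+\cdots+M^n)R^m m!\, p(m)$. To close the induction into $\|\varphi\|_{H^{m+2}}\leq MR^{m+2}(m+2)!$, it then suffices to choose $R$ large enough so that $R^2(m+1)(m+2) \geq C p(m)(1 + M + \cdots + M^n)/M$ for all $m$ above some threshold, with the remaining finitely many small-$m$ cases handled by the $C^\infty$ bootstrap, adjusting $M$ accordingly. The main obstacle will be the combinatorial bookkeeping in iterating the product estimate $k$ times so that the factorial structures on the two sides match precisely; however, the polynomial-in-$m$ losses from the $H^s$ regularity shift are absorbed by the quadratic-in-$m$ gain $(m+1)(m+2) = (m+2)!/m!$ coming from the fact that the elliptic estimate buys two derivatives on the left, so the induction closes cleanly.
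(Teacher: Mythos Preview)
Your strategy is sound in outline, but the closing step of the induction has a genuine gap: the polynomial loss $p(m)$ is \emph{not} of degree at most two, so it cannot be absorbed by the gain $(m+1)(m+2)$ coming from the two extra derivatives.

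To see this, ignore the $s$-shift entirely and look only at the combinatorics. After applying the multinomial identity, each tuple $(j_0,\ldots,j_k)$ with $j_0+\cdots+j_k=m$ contributes exactly $m!$ (times constants), but the \emph{number} of such tuples is $\binom{m+k}{k}$. Hence your bound reads
\begin{equation*}
 \Bigl\|a_k\varphi^k\Bigr\|_{H^m}\;\lesssim\; M^k R^m\,m!\,\binom{m+k}{k}=M^k R^m\,\frac{(m+k)!}{k!},
\end{equation*}
and closing the induction requires $\binom{m+k}{k}\lesssim R^2(m+1)(m+2)$. For $k\geq 3$ the left-hand side grows like $m^{k}/k!$, so no fixed $R$ can work; the $s$-shift only adds further factors of order $\prod_i(j_i+1)^s$. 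Since the lemma is stated (and applied in the paper) for arbitrary $n$ --- in fact $n=5$ for the Kolmogorov construction --- this is fatal as written.

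The remedy, which is what the paper does, is to \emph{strengthen the induction hypothesis with summable weights}: one proves
\begin{equation*}
 \|\partial^\alpha\varphi\|_{H^2}\leq C_\ast\,\frac{R^{|\alpha|}\,\alpha!}{(\alpha_1+1)^2(\alpha_2+1)^2},
\end{equation*}
working directly in the algebra $H^2(\T^2)$ so that no $s$-shift is needed at all. The key point is the discrete convolution inequality
\begin{equation*}
 \sum_{j=0}^N\frac{1}{(1+j)^2(1+N-j)^2}\leq\frac{C}{(1+N)^2},
\end{equation*}
which says that each binary product in the Leibniz expansion \emph{reproduces} the weight rather than losing a factor of $N$. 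Iterating $k\leq n$ times then costs only $C^{k}$, independent of $m$, and the induction closes by choosing $R$ large depending on $n,C_a,C_{\mathscr{L}}$ but not on $m$. Your argument can be repaired by inserting such a weight into the induction hypothesis; without it the factorial bookkeeping does not balance.
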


\begin{remark}
\begin{enumerate}
 \item As the proof shows, it suffices to impose the requirement \eqref{eq:L_gain} on the solution of \eqref{eq:proto_pde}, rather than on a general $f\in L^2$. Moreover, one may allow $\mathscr{L}$ to have variable coefficients, provided suitable commutativity properties with derivatives hold.

 \item By tracking the constants in the proof of Lemma \ref{lem:analytic} one sees that the radius of analyticity $\lambda$ can be chosen to be of order $O(C_a^{-1})$.
 
 \item While the analytic regularity of solutions to general semilinear elliptic equations with analytic nonlinearity seems to be a classical result (see for example \cite[page 136]{BeJoSc_PDE}), we were not able to find a modern proof of this that also gives norm estimates for the solutions. We thus give the full result and proof.
\end{enumerate}

\end{remark}

We show next how this implies the claimed analyticity of the stationary solutions $\psi_\eps$ with uniform in $\eps$ Gevrey bounds.
\begin{proof}[Proof of Corollary \ref{cor:analytic}]
Fix $\eps>0$ as in the statement of Proposition \ref{prop:elliptic}. By construction \eqref{eq:psi_elliptic}, $\psi_\eps$ satisfies $\psi_\eps\not\in\ker(1+\Delta)$ and solves an equation of the form
\begin{equation}
 (\Delta+1)\psi_\eps=p(\cos(y)+\eps\cos(x)+\eps\psi_\eps).
\end{equation}
Here, $p:z\mapsto Az+Bz^3+\frac{1}{5}z^5$ is a real polynomial with \emph{fixed} coefficients $A=A(\psi_\eps;\eps),B=B(\psi_\eps;\eps)\in [-1,1]$ that are bounded uniformly in $\eps>0$ (for $\eps$ sufficiently small) -- see Lemma \ref{lem:f_prop}.
Expanding $p$ as a polynomial in $\psi_\eps$ we thus obtain uniformly bounded $c_{mn}\in\R$, $1\leq m,n\leq 5$, such that
\begin{equation}
 (\Delta+1)\,\psi_\eps=\sum_{k=0}^5 \Big(\sum_{m+n=5-k}c_{mn}\cos(y)^m\eps^n\cos(x)^n\Big)\cdot\psi_\eps^k.
\end{equation}
This is of the form \eqref{eq:proto_pde}, and the conditions of Corollary \eqref{cor:analytic} are satisfied, uniformly in $\eps$: We have $\widehat{(\Delta +1)\varphi}(k)=(-k^2+1)\hat\varphi(k)$, hence for $\varphi\not\in\ker (\Delta+1)$ there holds $\norm{(\Delta+1)\varphi}_{L^2}\geq C_1\norm{\varphi}_{\dot{H}^{2}}$ for some $C_1>0$, and there exists $C_2>0$ such that for any $\ell\in\N_0$ there holds
\begin{equation}
 \norm{\sum_{m+n=5-k}c_{mn}\cos(y)^m\eps^n\cos(x)^n}_{H^\ell}\leq C_2^\ell,\qquad 0\leq k\leq 5.
\end{equation}
Hence we may apply the result of Lemma \ref{lem:analytic} to obtain the claim.
\end{proof}

Finally, we conclude this section by giving the proof of Lemma \ref{lem:analytic}.
\begin{proof}[Proof of Lemma \ref{lem:analytic}]
 For simplicity of notation let us abbreviate the nonlinearity in \eqref{eq:proto_pde} as $\mathcal{N}(\varphi)$. Furthermore we will write $C_0>0$ for the constant in the algebra property
 \begin{equation}\label{eq:H2_alg}
  \norm{fg}_{H^2}\leq C_0\norm{f}_{H^2}\norm{g}_{H^2}
 \end{equation}
 of $H^2$.
 
 We show by induction that there exist constants $R>0$, $C_\ast>0$, such that for any multiindex $\alpha=(\alpha_1,\alpha_2)\in\N_0^2$ we have
 \begin{equation}\label{eq:ind_hyp}
  \Vert\partial^\alpha\varphi\Vert_{H^2}\leq C_\ast\frac{R^{\abs{\alpha}}\cdot \alpha!}{(\alpha_1+1)^2(\alpha_2+1)^2}.
 \end{equation}
For $\abs{\alpha}=0$ this is simply the statement that $\varphi\in H^2(\T^2)$. Note that by assumption \eqref{eq:coeff_bd} we may assume that the corresponding statement with constant $2C_a$ holds for $a_k$,
\begin{equation}\label{eq:ind_coeff}
 \Vert\partial^\alpha a_k\Vert_{H^2}\leq \frac{(2C_a)^{\abs{\alpha}}\cdot \alpha!}{(\alpha_1+1)^2(\alpha_2+1)^2},\qquad 0\leq k\leq n,
\end{equation}
and we thus assume in what follows that $R\geq 2C_\alpha$.

If $\abs{\alpha}\leq 2$, using the assumptions \eqref{eq:L_gain} and \eqref{eq:coeff_bd} as well as the algebra property \eqref{eq:H2_alg}, we can estimate
\begin{equation}
 \norm{\partial^\alpha\varphi}_{\dot{H}^2}\leq C_\mathscr{L} \norm{\partial^\alpha \mathscr{L} \varphi}_{L^2}\leq C_\mathscr{L} \norm{\mathcal{N}(\varphi)}_{H^2}\leq 4C_0^{n-1}C_\ast C_\mathscr{L} \cdot((C_a^2\cdot 2!)^{n}+\norm{\varphi}_{H^2}^n)\leq \frac{R^{\abs{\alpha}}\cdot\alpha!}{9},
\end{equation}
for $R>0$ chosen large enough.

Now assume that \eqref{eq:ind_hyp} holds for all $\abs{\beta}\leq\ell$, for some $\ell\geq 3$. Let $\beta=(\beta_1,\beta_2)\in\N_0^2$ with $\vert\beta\vert=\ell+1$. Then we have that for any $\gamma\leq\beta$ with $\abs{\gamma}=\vert\beta\vert-2=\ell-1$, there holds
\begin{equation}
 \Vert\partial^\beta\varphi\Vert_{\dot{H}^2}\leq C_\mathscr{L} \Vert\partial^\beta \mathscr{L} \varphi\Vert_{L^2}\leq C_\mathscr{L} \Vert\partial^\beta\mathcal{N}(\varphi)\Vert_{L^2}\leq C_\mathscr{L} \norm{\partial^{\gamma}\mathcal{N}(\varphi)}_{H^2}.
\end{equation}
Since by \eqref{eq:ind_hyp} and \eqref{eq:ind_coeff} both $a_k$ and $\varphi$ satisfy the same kind of bounds, to estimate $\partial^{\gamma}\mathcal{N}(\varphi)$ in $H^2$ it suffices to bound monomials $\norm{\partial^\gamma(\varphi^k)}_{H^2}$, $0\leq k\leq n$. Writing $\gamma=(\gamma_x,\gamma_y)$, we expand this to deduce that
\begin{equation}\label{eq:deriv_exp}
\begin{aligned}
 \norm{\partial^\gamma(\varphi^k)}_{H^2}&=\norm{\partial_x^{\gamma_x}\partial_y^{\gamma_y}(\varphi^k)}_{H^2}\leq C_0\sum_{\substack{i_1\leq\gamma_x,\\i_1'\leq\gamma_y}}\binom{\gamma_x}{i_1}\binom{\gamma_y}{i_1'}\norm{\partial_x^{\gamma_x-i_1}\partial_y^{\gamma_y-i_1'}\varphi}_{H^2}\norm{\partial_x^{i_1}\partial_y^{i_1'}(\varphi^{k-1})}_{H^2}\\
 &\leq C_0^k\sum_{\substack{i_1\leq\gamma_x,\\i_1'\leq\gamma_y}}\binom{\gamma_x}{i_1}\binom{\gamma_y}{ i_1'}\norm{\partial_x^{\gamma_x-i_1}\partial_y^{\gamma_y-i_1'}\varphi}_{H^2}\sum_{\substack{i_2\leq i_1,\\i_2'\leq i_1'}}\binom{i_1}{i_2}\binom{i_1'}{i_2'}\norm{\partial_x^{i_1-i_2}\partial_y^{i_1'-i_2'}\varphi}_{H^2}\ldots\\
 &\qquad\qquad\ldots \sum_{\substack{i_{k-1}\leq i_k,\\i_{k-1}'\leq i_{k}'}}\binom{i_{k-1}}{i_k}\binom{i_{k-1}'}{i_k'}\norm{\partial_x^{i_{k-1}-i_k}\partial_y^{i_{k-1}'-i_k'}\varphi}_{H^2}\norm{\partial_x^{i_k}\partial_y^{i_k'}\varphi}_{H^2}.
\end{aligned} 
\end{equation}
Now we note that for any $N\in\N$ we have
\begin{equation}
\begin{aligned}
 \sum_{m=0}^N\frac{1}{(1+m)^2(1+(N-m))^2}&\leq \frac{1}{(1+\lfloor\frac{N}{2}\rfloor)^2}\sum_{m=0}^{\lfloor\frac{N}{2}\rfloor}\frac{1}{(1+m)^2}+\frac{1}{(1+\lceil\frac{N}{2}\rceil)^2}\sum_{\lceil\frac{N}{2}\rceil}^{N}\frac{1}{(1+(N-m))^2}\\
 &\leq \frac{8}{(1+N)^2}\cdot\frac{\pi^2}{6}\leq \frac{15}{(1+N)^2},
\end{aligned} 
\end{equation}
so that appealing to the inductive hypothesis \eqref{eq:ind_hyp} we estimate that
\begin{equation}
\begin{aligned}
 &\sum_{\substack{i_{k-1}\leq i_k,\\i_{k-1}'\leq i_{k}'}}\binom{i_{k-1}}{i_k}\binom{i_{k-1}'}{i_k'}\norm{\partial_x^{i_{k-1}-i_k}\partial_y^{i_{k-1}'-i_k'}\varphi}_{H^2}\norm{\partial_x^{i_k}\partial_y^{i_k'}\varphi}_{H^2}\\
 &\leq C_\ast^2 R^{i_{k-1}}R^{i'_{k-1}}i_{k-1}!\, i_{k-1}'! \sum_{i_{k-1}\leq i_k}\frac{1}{(1+i_{k-1})^2(1+(i_k-i_{k-1}))^2}
 \sum_{i_{k-1}'\leq i_{k}'}\frac{1}{(1+i_{k-1}')^2(1+(i_k'-i_{k-1}'))^2}\\
 &\leq (15C_\ast)^2\cdot R^{i_{k-1}}R^{i'_{k-1}}i_{k-1}!\, i_{k-1}'!\cdot\frac{1}{1+(i_{k-1})^2}\frac{1}{1+(i_{k-1}')^2}.
\end{aligned}
\end{equation}
We iterate this in \eqref{eq:deriv_exp} to deduce that
\begin{equation}
\begin{aligned}
 \norm{\partial^\gamma(\varphi^k)}_{H^2}&\leq C_0^k\cdot (15C_\ast)^{2k}\cdot\frac{R^{\gamma_x}\gamma_x!}{(1+\gamma_x)^2}\frac{R^{\gamma_y}\gamma_y!}{(1+\gamma_y)^2}\leq 4 C_0^n\cdot (15C_\ast)^{2n}\frac{R^{\abs{\gamma}}\cdot\beta!}{(1+\beta_1)^2(1+\beta_2)^2}\\
 &\leq \frac{1}{nC_\mathscr{L} }\cdot\frac{R^{\abs{\beta}}\cdot\beta!}{(1+\beta_1)^2(1+\beta_2)^2},
\end{aligned} 
\end{equation}
where we used that $\abs{\gamma}+2=\abs{\alpha}$ and chose $R\geq2 C_0^{\frac{n}{2}}\cdot (15C_\ast)^n\cdot (n C_\mathscr{L} )^{\frac{1}{2}}$ large enough. This yields the induction claim \eqref{eq:ind_hyp} upon summation: 
\begin{equation}
 \Vert\partial^\beta\varphi\Vert_{\dot{H}^2}\leq C_\mathscr{L} \norm{\partial^{\gamma}\mathcal{N}(\varphi)}_{H^2}\leq nC_\mathscr{L} \sup_{0\leq k\leq n}\Vert\partial^\gamma(\varphi^k)\Vert_{H^2}\leq \frac{R^{\abs{\beta}}\cdot\beta!}{(1+\beta_1)^2(1+\beta_2)^2}.
\end{equation}

Now the conclusion follows swiftly: From \eqref{eq:ind_hyp} we deduce that
\begin{equation}
 \norm{\varphi}_{\dot{H}^k}\leq k\cdot \sup_{\abs{\alpha}=k-2}\norm{\partial^\alpha \varphi}_{H^2}\leq C_\ast\cdot R^k \cdot k!,
\end{equation}
so that for $0<\lambda<\frac{1}{2R}$ we have
\begin{equation}
 \norm{f}_{\mathcal{G}^\lambda}\leq C_\ast\sum_{k=0}^\infty\frac{\lambda^k}{k!}\norm{f}_{\dot{H}^{k}}\leq C_\ast \sum_{k=0}^\infty(\lambda R)^k\leq 2C_\ast.
\end{equation}
This concludes the proof.
\end{proof}

\subsection{Obstructions on the Square Torus}\label{ssec:Kolmo_obstruct}
Here we shed light on a piece in the puzzle towards understanding the local set of steady states around the Kolmogorov flow on $\T^2$. As mentioned already at the beginning of Section \ref{sec:Kolmo}, any nearby shear is trivially a stationary solution as well, as are flows of the form $\cos(y)+a\cos(x)+b\sin(x)$ for $a,b\in\R$ small enough.

In view of our previous results, it would thus be tempting to conjecture that the set of steady states can be identified in some sense with the kernel of $\Lin_K$ (as in the case of shear flows). That is, one could imagine that if one wanted to depart from Kolmogorov towards another stationary state, it should be possible to do so in the direction of any linearly neutral state. However, the following proposition shows that this is not the case: there are neutral directions that immediately take one outside the set of stationary states. To make this precise, let us denote by $\PP_K$ the projection onto $\ker\Lin_K$.

\begin{proposition}\label{prop:someflexT2}
If for some $\ell\in\N$, $\ell\geq 2$, 
\begin{equation}
 \frac{\PP_K(\Omega_\ast-\cos(y))}{\norm{\PP_K(\Omega_*-\cos(y))}_{L^2}}=\sin(\ell y)+\cos(x),
\end{equation} 
then there exists $\eps_0>0$ small so that if $\norm{\Omega_\ast-\cos(y)}_{H^6}=\eps<\eps_0$, then $\Omega_\ast$ is not a stationary solution to the $2d$ Euler equations. 
\end{proposition}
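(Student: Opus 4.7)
My plan is to derive a nonlinear obstruction to stationarity by exploiting the fact that the Kolmogorov velocity and the gradient of its vorticity both vanish on the critical lines $y \in \{0,\pi\}$. This forces stationarity, evaluated on these lines, to reduce to a purely quadratic relation in the perturbation $\omega := \Omega_\ast - \cos(y)$. Computing this relation with the hypothesized $\PP_K$-direction $\sin(\ell y) + \cos(x)$ will produce an unavoidable $\sin(x)$-mode of size $c^2$, which for $\ell \geq 2$ cannot be cancelled by any small-enough correction in $\mathcal{D} := (\ker\Lin_K)^\perp$.

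\paragraph{Setup.} Write $\omega = c\,\omega_1 + \omega_\perp$ with $\omega_1 := \sin(\ell y) + \cos(x) \in \ker\Lin_K$, $c \in \R$, and $\omega_\perp \in \mathcal{D}$; by hypothesis, $|c|, \|\omega_\perp\|_{H^6} \lesssim \eps$. Setting $u := \nabla^\perp\Delta^{-1}\omega$, stationarity of $\Omega_\ast$ is equivalent (after subtracting the trivial contribution of the base flow) to
\[
\Lin_K \omega_\perp \;=\; u \cdot \nabla \omega.
\]

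\paragraph{The critical-line obstruction.} Because both the base velocity and $\nabla\cos(y)$ vanish at $y=0$, evaluating the stationarity identity at $y = 0$ collapses to the pointwise relation $(u \cdot \nabla \omega)(x,0) = 0$ for all $x\in\T$, and similarly at $y = \pi$. (Equivalently, any distribution of the form $a(x)\delta(y) + b(x)\delta(y-\pi)$ lies in $\ker\Lin_K^*$, since $\sin(y)$ annihilates $\delta(y)$ and $\delta(y-\pi)$; the identity above is the pairing of stationarity against such distributions.) A direct computation using $u_1 := \nabla^\perp\Delta^{-1}\omega_1 = (\cos(\ell y)/\ell,\sin(x))$ yields
\[
(u_1\cdot\nabla\omega_1)(x,0) \;=\; -\tfrac{1}{\ell}\sin(x) + \ell\sin(x) \;=\; \tfrac{\ell^2-1}{\ell}\sin(x),
\]
which is \emph{nonzero} for every $\ell \geq 2$ (and vanishes precisely when $\ell = 1$). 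Testing the identity $(u\cdot\nabla\omega)(x,0) = 0$ against $\sin(x)$ in $L^2_x(\T)$ thus gives
\[
\pi\, c^2\, \tfrac{\ell^2 - 1}{\ell} \;+\; c\, J_1(\omega_\perp) \;+\; J_2(\omega_\perp,\omega_\perp) \;=\; 0,
\]
where $J_1$ is linear and $J_2$ quadratic in $\omega_\perp$, both controlled by Sobolev trace: $|J_1(\omega_\perp)| \lesssim \|\omega_\perp\|_{H^{3/2+}(\T^2)}$ and $|J_2(\omega_\perp,\omega_\perp)| \lesssim \|\omega_\perp\|_{H^{3/2+}}^2$.

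\paragraph{Closing the argument and main obstacle.} The remaining ingredient is the sharp smallness estimate $\|\omega_\perp\|_{H^s} = O(c^2)$ for a suitable $s \leq 6$. The rationale is that $\mathcal{D}\cap\ker\Lin_K = \{0\}$, so matching orders in the stationarity equation $\Lin_K\omega_\perp = u\cdot\nabla\omega$ rules out any nontrivial $O(c)$ piece of $\omega_\perp$ (which would have to solve the homogeneous linearized equation inside $\mathcal{D}$), forcing $\omega_\perp$ to be genuinely of size $c^2$ via a Lyapunov--Schmidt-type bootstrap. With this estimate, the obstruction above becomes
\[
\pi\,c^2\,\tfrac{\ell^2-1}{\ell} \;\leq\; C\,|c|\,c^2 + C\,c^4 \;=\; O(c^3),
\]
which for $\ell \geq 2$ forces $c = 0$ once $\eps_0$ is small enough, contradicting $\PP_K(\Omega_\ast-\cos(y))\neq 0$. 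I expect the main difficulty to be justifying the bootstrap $\|\omega_\perp\|_{H^s} \lesssim c^2$ rigorously, because $\Lin_K$ is not invertible on $\mathcal{D}$: its $L^2$-range has infinite codimension, coming exactly from the distributional kernel elements supported on $\{y=0,\pi\}$ that were used to extract the obstruction. Implementing the bootstrap therefore requires a delicate mode-by-mode analysis in Fourier, exploiting the tridiagonal structure of $\Lin_K$ on each $x$-frequency sector together with the $H^6$ regularity of $\omega_\perp$.
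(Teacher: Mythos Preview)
Your proposal follows essentially the same route as the paper: decompose $\omega=c\,\omega_1+\omega_\perp$ with $\omega_1=\sin(\ell y)+\cos(x)\in\ker\Lin_K$ and $\omega_\perp\in\mathcal{D}$, extract the obstruction from the nonvanishing of $u_1\cdot\nabla\omega_1$ on the critical line $y=0$, and close by showing $\omega_\perp=O(c^2)$. The paper evaluates at the single point $(\pi/2,0)$ rather than testing against $\sin(x)$ along $y=0$, but this is cosmetic; your computation $(u_1\cdot\nabla\omega_1)(x,0)=\tfrac{\ell^2-1}{\ell}\sin(x)$ specializes to the paper's $\tfrac{3}{2}$ at $(\pi/2,0)$ when $\ell=2$.

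The only real gap is that you substantially overestimate the difficulty of the bootstrap. It does \emph{not} require any mode-by-mode analysis or confronting the range of $\Lin_K$. The paper dispatches it with the elementary integration-by-parts identity
\[
\|\partial_y(\sin(y)f)\|_{L^2}\ \geq\ \tfrac{1}{2}\|f\|_{L^2},
\]
which, applied to $f=(1+\Delta^{-1})\partial_x\omega_\perp$ (bounded below by $\tfrac12\|\omega_\perp\|_{L^2}$ on $\mathcal{D}$), gives the coercivity $\|\omega_\perp\|_{L^2}\lesssim\|\Lin_K\omega_\perp\|_{H^1}=\|u\cdot\nabla\omega\|_{H^1}\lesssim c^2+|c|\,\|\omega_\perp\|_{H^2}+\|\omega_\perp\|_{H^2}^2$. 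Interpolating $\|\omega_\perp\|_{H^2}\leq\|\omega_\perp\|_{L^2}^{1/2}\|\omega_\perp\|_{H^4}^{1/2}$ and using $\|\omega_\perp\|_{H^4}\leq\eps$ to absorb the last two terms yields $\|\omega_\perp\|_{L^2}\lesssim c^2$, hence $\|\omega_\perp\|_{H^3}^2\lesssim\|\omega_\perp\|_{L^2}\|\omega_\perp\|_{H^6}\lesssim c^2\eps$. This is precisely the $W^{1,\infty}$ control your closing inequality needs, and is the sole place where the $H^6$ hypothesis is used. With this in hand, your obstruction $\pi c^2\tfrac{\ell^2-1}{\ell}=O(c^2\sqrt{\eps})$ forces $c=0$ for $\eps$ small, completing the argument.
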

\begin{remark}
The coefficients of $\sin(\ell y)$ and $\cos(x)$ are not actually important in the proof, so as a result we obtain Proposition \ref{prop:main_someflex} from the introduction. As will be seen from the proof, the heuristic condition on $\PP_K(\Omega_\ast-\cos(y))$ is that its self-interaction \emph{not} lie in the range of $\mathcal{L}_K$. It seems that non-existence can be established in general under this condition, but note that lying in the range of $\mathcal{L}_K$ is not a sufficient condition for existence.
\end{remark}

\begin{proof}
Without loss of generality we treat the case $\ell=2$. By assumption, we can write 
\begin{equation}
 \omega_*:=\Omega_\ast-\cos(y)=a(\sin(2y)+\cos(x))+\widetilde\omega(x,y),
\end{equation}
where $a\in\R\setminus\{0\}$ and $\widetilde\omega\in \ker(\mathcal{L}_K)^\perp$ (i.e.\ $\widetilde\omega$ is orthogonal to all functions of $y$ only, as well as to $\sin(x)$ and $\cos(x)$). By assumption, we also know that
\begin{equation}
 \abs{a}^2+\norm{\widetilde\omega}_{H^6}^2\leq \eps^2.
\end{equation}
Now assume toward a contradiction that $\omega_\ast$ is a stationary solution to the $2d$ Euler equations. 
Then we have that $-\Lin_K\widetilde\omega=-\Lin_K\omega_\ast=u_\ast\cdot\nabla\omega_\ast$, where $u_\ast=\nabla^\perp\Delta^{-1}\omega_\ast$, and we compute this as 
\begin{equation}\label{eq:tilde}
\begin{aligned}
-\Lin_K\widetilde\omega=u_\ast\cdot\nabla\omega_\ast&=\left[a\begin{pmatrix}\frac{1}{2}\cos(2y)\\\sin(x)\end{pmatrix}+\widetilde{u}\right]\cdot\nabla\left[a\begin{pmatrix}-\sin(x)\\2\cos(2y)\end{pmatrix}+\widetilde{\omega}\right]\\
&=\frac{3}{2}a^2\sin(x)\cos(2y)+a\begin{pmatrix}\frac{1}{2}\cos(2y)\\\sin(x)\end{pmatrix}\cdot\nabla\widetilde\omega+\widetilde{u}\cdot a\begin{pmatrix}-\sin(x)\\2\cos(2y)\end{pmatrix}+\widetilde{u}\cdot\nabla\widetilde\omega.
\end{aligned}
\end{equation}
Here $\widetilde{u}=\nabla^\perp\Delta^{-1}\widetilde\omega$, which is average-free.

Next we note that a simple integration by parts gives that $\norm{\de_y(\sin(y)f)}_{L^2}\geq \frac{1}{2}\norm{f}_{L^2}$, with which it follows that
\begin{equation}
\begin{aligned}
 \norm{\widetilde\omega}_{L^2}&\leq 2\norm{\Lin_K\widetilde\omega}_{H^1}=2\norm{u_\ast\cdot\nabla\omega_\ast}_{H^1}\lesssim Ca^2+C\abs{a}\norm{\widetilde\omega}_{H^2}+C\norm{\widetilde\omega}_{H^2}^2\\
 &\leq Ca^2+C\abs{a}\norm{\widetilde\omega}_{L^2}^{1/2}\norm{\widetilde\omega}_{H^4}^{1/2}+C\norm{\widetilde\omega}_{L^2}\norm{\widetilde\omega}_{H^4}\\
 &\leq Ca^2 +C\eps^{1/2}\norm{\widetilde\omega}_{L^2}^{1/2}+C\eps\norm{\widetilde\omega}_{L^2},
\end{aligned} 
\end{equation}
where $C>0$ is a universal constant. Hence for $\eps>0$ sufficiently small we have
\begin{equation}
 \norm{\widetilde\omega}_{L^2}\leq Ca^2,
\end{equation}
implying that
\begin{equation}\label{eq:H3est}
 \norm{\widetilde\omega}_{H^3}^2\leq C\norm{\widetilde\omega}_{L^2}\norm{\widetilde\omega}_{H^6}\leq Ca^2\eps.
\end{equation}
On the other hand, evaluating \eqref{eq:tilde} at the point $(x,y)=(\pi/2,0)$, we use \eqref{eq:H3est} to find the bound
\begin{equation}
 0\geq\frac{3}{2}a^2-aC\norm{\widetilde\omega}_{W^{1,\infty}}-\norm{\widetilde\omega}_{W^{1,\infty}}^2\geq\frac{3}{2}a^2-Ca^2\eps^{1/2}-Ca^2\eps.
\end{equation}
For $\eps>0$ sufficiently small this implies that $a=0$, which is a contradiction.
\end{proof}

\subsection{Rigidity on Rectangular Tori}\label{ssec:Kolmo_rigid}
In this section we further highlight the role of the domain geometry in creating the global degeneracy we exploited in Theorem \ref{thm:main_Kolmo} to construct non-trivial stationary states near the Kolmogorov flow. We now show that this result is indeed special to the \emph{square} torus $\T^2$: we demonstrate that on \emph{rectangular} tori $\T^2_{\delta}=[0,2\pi \delta]\times [0,2\pi]$ with $\delta>0$ and $\delta\not\in\N$ (and periodic boundary conditions), all stationary, sufficiently regular $2d$ Euler flows near the Kolmogorov flow $U_K=(\sin(y),0)$ are purely shears. This is the content of our Theorem \ref{thm:main_rigid_intro}, restated here for convenience.

\begin{theorem*}[Rigidity on rectangular tori]\label{thm:main_rigid}
 Consider the stationary solution $U_K(x,y)=(\sin(y),0)$ on $\T^2_{\delta}$, $\delta>0$ with $\delta\not\in\N$, of the Euler equations \eqref{eq:NSEvel}. There exists $\eps_0>0$ (depending on $\delta$) such that if $U:\T^2_\delta\to\R^2$ is a further stationary solution to the Euler equations with 
 \begin{equation}\label{eq:ass_rigid}
  \norm{U-U_K}_{H^3}\leq \eps_0,
 \end{equation}
 then $U=U(y)$ is a shear flow.
\end{theorem*}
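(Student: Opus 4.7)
The plan is to write $U=U_K+u$ and $\Omega=-\cos(y)+\omega$, and decompose $\omega = \bar\omega(y) + \omega'$ into its shear ($x$-average) and mean-free ($\NP$) parts, with $\psi=\Delta^{-1}\omega$. Expanding the stationary condition $U\cdot\nabla\Omega=0$, using $\partial_y\Omega_K = \sin(y)$ and $u_2=\partial_x\psi$, one arrives at the key identity
\[
\mathcal{L}_K\omega' = -u\cdot\nabla\omega,\qquad \mathcal{L}_K := \sin(y)\,\partial_x(1+\Delta^{-1}),
\]
using that shears lie in $\ker\mathcal{L}_K$ so that $\mathcal{L}_K\omega=\mathcal{L}_K\omega'$. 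Moreover, the horizontal character of $u_{\mathrm{shear}}$ and verticality of $\nabla\bar\omega$ imply $u_{\mathrm{shear}}\cdot\nabla\bar\omega\equiv 0$, so
\[
u\cdot\nabla\omega \;=\; u_{\mathrm{shear}}\cdot\nabla\omega' \,+\, u'\cdot\nabla\bar\omega \,+\, u'\cdot\nabla\omega',
\]
in which every term is at least linear in $\omega'$ via Biot--Savart, $\|u'\|_{H^3}\sim\|\omega'\|_{H^2}$.

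The distinguishing feature on $\T^2_\delta$ with $\delta\not\in\bbN$ is that $\ker\mathcal{L}_K\cap\NP=\{0\}$. Indeed, factoring $\mathcal{L}_Kf = \sin(y)\,\partial_x(1+\Delta)\Delta^{-1}f$ on $\NP$, the symbol $1-k^2/\delta^2-l^2$ of $(1+\Delta)$ is nonzero for all $(k,l)\in\bbZ^2$ with $k\neq 0$ precisely when $\delta\not\in\bbN$ (by contrast, $\cos x,\sin x\in\ker(1+\Delta)$ on $\T^2$, fueling Theorem~\ref{thm:main_Kolmo}). Consequently $(1+\Delta)$ is invertible on $\NP$ with a uniform spectral gap $\eta_\delta>0$, and from this I would derive a quantitative coercivity estimate of the form
\[
\|\omega'\|_{H^s}\,\leq\, C_\delta\,\|\mathcal{L}_K\omega'\|_{H^{s+1}}.
\]
The argument takes $x$-Fourier in $\sin(y)\,\partial_x\Phi'=g$ with $\Phi'=(1+\Delta)\psi'$ and $g:=\mathcal{L}_K\omega'$, inverts to $\hat\Phi'_k(y)=(\delta/(ik))\,\hat g_k(y)/\sin(y)$ via a Hardy inequality (the compatibility $g|_{y=0,\pi}=0$ is automatic from the stationary equation $\mathcal{L}_K\omega'+u\cdot\nabla\omega=0$, since $\sin(y)$ vanishes there and forces the nonlinear side to vanish as well), and then restores $\omega'$ using the spectral gap of $(1+\Delta)^{-1}$.

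Combining the coercivity with tame product estimates on the split above and the smallness $\|u\|_{H^3}+\|\omega\|_{H^2}\lesssim\eps_0$ yields $\|u\cdot\nabla\omega\|_{H^{s+1}}\leq C\eps_0\|\omega'\|_{H^{s'}}$ for compatible $s, s'$; a short bootstrap between the $H^1$ and $H^2$ norms of $\omega'$ (using the $L^2$-boundedness of $\mathcal{L}_K$ at the top and coercivity at the bottom) then forces $\omega'\equiv 0$ for $\eps_0$ sufficiently small, so $\Omega=-\cos(y)+\bar\omega(y)$ and $U=U(y)$ is a shear.

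The hard part will be the interplay between the one-derivative loss in the coercivity (from inverting $\sin(y)$ via Hardy) and the minimal regularity $H^3$ on $U$, which puts the vorticity precisely at the 2D Sobolev algebra threshold $H^2$. Closing the estimate hinges crucially on the cancellation $u_{\mathrm{shear}}\cdot\nabla\bar\omega\equiv 0$, which keeps every nonlinear contribution linear in $\omega'$ with a small prefactor $\lesssim\eps_0$; care is also needed to track the $\delta$-dependence of $\eta_\delta$ (which degenerates as $\delta$ approaches an integer, dictating how $\eps_0$ must shrink in $\delta$).
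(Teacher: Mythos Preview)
Your strategy is essentially the same as the paper's: exploit that on $\T^2_\delta$ with $\delta\notin\N$ the multiplier $1+\Delta^{-1}$ has a spectral gap on the non-shear subspace, so that $\ker\Lin_K$ consists only of shears, and combine this with the fact that every nonlinear term is at least linear in the non-shear part. The difference lies in how the coercivity is implemented, and here your proposal has a genuine gap at the stated regularity.

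Your route is to first prove an abstract inequality $\|\omega'\|_{H^s}\le C_\delta\|\Lin_K\omega'\|_{H^{s+1}}$ via Hardy (to invert multiplication by $\sin y$) and then bound $\|u\cdot\nabla\omega\|_{H^{s+1}}$ separately. With $U\in H^3$ one has $\omega\in H^2$, so $u\cdot\nabla\omega\in H^1$ at best and you are forced to take $s=0$. But the term $\bar u_1\,\partial_x\omega'$ in your decomposition gives
\[
\|\bar u_1\,\partial_x\omega'\|_{H^1}\lesssim \|\bar u_1\|_{W^{1,\infty}}\|\partial_x\omega'\|_{H^1}\lesssim \eps_0\,\|\omega'\|_{H^2},
\]
so your chain yields only $\|\omega'\|_{L^2}\lesssim \eps_0\|\omega'\|_{H^2}$, which does not force $\omega'\equiv 0$. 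The proposed ``bootstrap between $H^1$ and $H^2$'' does not close this: higher-order Hardy would require $\partial_y(u\cdot\nabla\omega)$ to vanish at $y=0,\pi$, which is not available, and interpolation against the a priori bound $\|\omega'\|_{H^2}\le\eps_0$ only produces smallness, not vanishing.

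The paper closes at $H^3$ by \emph{not} separating the coercivity from the nonlinear estimate. It applies $\partial_y$ to the equation and pairs with the multiplier $\cos(y)(1+\Delta^{-1})\partial_x\omega$, using the exact identity
\[
\int \partial_y\big(\sin(y)f\big)\cos(y)\,f=\tfrac12\int f^2
\]
with $f=(1+\Delta^{-1})\partial_x\omega$ to obtain $\gtrsim c_\delta\|\partial_x\omega\|_{L^2}^2$ on the left. On the right one has the pairing $\int\partial_y(u\cdot\nabla\omega)\cos(y)(1+\Delta^{-1})\partial_x\omega$; since the test function already carries a factor of $\partial_x\omega$, and the worst term $u_1\partial_{xy}\omega$ can be integrated by parts in $y$ against it, every contribution is bounded by $C\|\omega\|_{H^2}\|\partial_x\omega\|_{L^2}^2$. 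Thus one obtains $c_\delta\|\partial_x\omega\|_{L^2}^2\le C\eps_0\|\partial_x\omega\|_{L^2}^2$, which forces $\partial_x\omega\equiv 0$ for $\eps_0$ small. The point is that keeping the pairing (rather than estimating $\|u\cdot\nabla\omega\|_{H^1}$ in isolation) recovers the extra factor of $\partial_x\omega$ that your abstract scheme loses.
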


We observe that this rigidity is also witnessed at the level of the linearized operator $\Lin_K$: Since on $\T^2_\delta$ there exists $c_\delta>0$ such that 
\begin{equation}\label{eq:rect_bd}
 \norm{(1+\Delta_\delta^{-1})\de_x f}_{L^2}\geq c_\delta\norm{\de_x f}_{L^2},
\end{equation}
the kernel $\ker\Lin_K$ consists only of shears in $y$, a fact we rely on for our proof of this result. 

\begin{proof}[Proof of Theorem \ref{thm:main_rigid_intro}]
 We define $u:=U-U_K$, and let $\omega$ be the associated vorticity $\omega=\de_x u_2-\de_y u_1$, which has zero average on $\T^2_\delta$ and satisfies $\norm{\omega}_{H^2}\lesssim\eps_0$ by assumption \eqref{eq:ass_rigid}. Since $U$ is a stationary Euler solution, we have that $\omega$ satisfies the equation
 \begin{equation}
  \Lin_K \omega+u\cdot\nabla\omega=0.
 \end{equation}
 Writing $\Delta_\delta$ for the  Laplacian on $\T^2_\delta$, we have that $\Lin_K=\sin(y)(1+\Delta_\delta^{-1})\de_x$, and thus 
 \begin{equation}
  \int \de_y\left(\sin(y) (1+\Delta_\delta^{-1})\de_x \omega\right)\cos(y)(1+\Delta_\delta^{-1})\de_x \omega=-\int \partial_y(u\cdot\nabla \omega)\cos(y)(1+\Delta_\delta^{-1})\de_x\omega.
 \end{equation}
 Together with the identity 
 \begin{equation}
  \int \de_y \left(\sin(y)f\right)\cos(y)f=\frac{1}{2}\int f^2
 \end{equation}
 and \eqref{eq:rect_bd}, it then follows that for some universal constant $C>0$ we have
 \begin{equation}\label{eq:ineq_rigid}
 \begin{aligned}
  2c_{\delta}\norm{\de_x\omega}_{L^2}^2&\leq 2\norm{(1+\Delta_\delta^{-1})\,\de_x\omega}_{L^2}^2\leq \abs{\int \partial_y(u\cdot\nabla \omega)\cos(y)(1+\Delta_\delta^{-1})\de_x\omega}\\
  &\leq \norm{\de_yu_1}_{L^\infty}\norm{\de_x\omega}_{L^2}^2+\norm{\de_yu_2}_{L^4}\norm{\de_y\omega}_{L^4}\norm{\de_x\omega}_{L^2}\\
  &\qquad +\norm{u_2}_{L^\infty}\norm{\de_{yy}\omega}_{L^2}\norm{\de_x\omega}_{L^2}+\abs{\int u_1\de_{xy}\omega\cos(y)(1+\Delta_\delta^{-1})\de_x\omega}\\
  &\leq C\norm{\de_x\omega}_{L^2}^2\norm{\omega}_{H^2},
 \end{aligned} 
 \end{equation}
 where we used that $\norm{u_2}_{L^\infty}=\norm{\de_x\Delta_\delta^{-1}\omega}_{L^\infty}\lesssim \norm{\de_x\omega}_{L^2}$ since $\omega$ is average free, and that by integration by parts there holds
 \begin{equation}
  \abs{\int u_1\de_{xy}\omega\cos(y)(1+\Delta_\delta^{-1})\de_x\omega}\leq \norm{\de_x\omega}_{L^2}^2\left[\norm{\de_y u_1}_{L^\infty}+2\norm{u_1}_{L^\infty}\right].
 \end{equation}
In conclusion, if we assume that $\eps_0<\frac{2c_\delta}{C}$, equation \eqref{eq:ineq_rigid} can only hold if $\de_x\omega=0$, i.e.\ if $\omega$ (and thus also $U$) is a pure shear flow.
\end{proof}

\section{No Threshold for Enhanced Dissipation near Bar States on $\T^2$}\label{sec:Bar}
We now turn to the closely related question of the dynamical behavior of solutions to the Navier-Stokes equations near a \emph{bar state} $\Omega_{bar}(t,y)=\e^{-\nu t}\cos(y)$. One verifies directly that $\Omega_{bar}(t,y)$ satisfies the two dimensional Navier-Stokes equations \eqref{eq:NSEvort} on $\T^2$.

Now consider a solution $f(t)$ of the linearized equation near a bar state,
\begin{equation}\label{eq:Bar_lin2}
 \de_t f+\e^{-\nu t}\Lin_K f=\nu\Delta f.
\end{equation}
One sees directly that the projection of $f$ onto $\ker\Lin_K=\{\cos(x),\sin(x)\}\cup\{f\in L^2:\de_x f\equiv 0\}$ simply obeys a heat equation, so no decay beyond the natural time scale $O(\nu^{-1})$ can hold. However, once one projects away from $\ker\Lin_K$ to $\mathcal{D}:=(\ker\Lin_K)^\perp$, (inviscid) advection and diffusion conspire to create an enhanced rate of dissipation \cite{WZ19,WZZ20,SMM19}, such that solutions $f$ to \eqref{eq:Bar_lin2} satisfy
\begin{equation}
 \norm{\PP_\D f(t)}_{L^2}\lesssim \e^{-c_1\nu^{1/2}t}\norm{\PP_\D f(0)}_{L^2}, \qquad \forall t\leq \frac{\tau}{\nu},
\end{equation}
where $c_1>0$ is some universal constant and $\tau>0$ is arbitrary. 

However, as discussed in the introduction, our Theorem \ref{thm:main_Bar} demonstrates that there \emph{cannot be any threshold} below which such $L^2$ decay also holds in the nonlinear Navier-Stokes problem near the bar states on $\T^2$. This follows from the fact that there exist initial data $\Omega_\eps$, arbitrarily close to those of the bar states, that do not lead to decay before the diffusive time scale $O(\nu^{-1})$ is reached. Our proof establishes this as follows.

\subsection{Proof of Theorem \ref{thm:main_Bar}}
Let $\Psi_\eps$ be a stationary stream function for the Euler equations as in Theorem \ref{thm:main_Kolmo} (or Proposition \ref{prop:KolmoH2}). Recall from Lemma \ref{lem:psi_eps_exp} that with $g(y):=c_0\cos(3y)-c_1\cos(5y)$ and $\phi_\eps\in H^2(\T^2)$ it can then be written as
\begin{equation}
 \Psi_\eps=\cos(y)+\eps\cos(x)+\eps g(y)+\eps^2\phi_\eps(x,y),
\end{equation}
with moreover $\int_{\T^2}\Psi_\eps=0$ (as follows directly from construction as a solution to \eqref{eq:stat_sol}). One computes directly that
\begin{equation}
 U_\eps=\nabla^\perp\Psi_\eps=\begin{pmatrix}\sin(y)-\eps g'(y)\\ -\eps\sin(x)\end{pmatrix}+O(\eps^2),\quad \Omega_\eps=-\cos(y)-\eps\cos(x)+\eps g''(y)+O(\eps^2).
\end{equation}
Dropping for simplicity of notation the subscript $\eps$, we define now the heat flow $\Omega^h$ of $\Omega_\eps$ as
\begin{equation}
 \Omega^h(t):=\e^{\nu t\Delta}\Omega,\quad U^h(t):=\e^{\nu t\Delta}U,
\end{equation}
which solves $\de_t\Omega^h=\nu\Delta\Omega^h$. For future use we note that also
\begin{equation}
 \norm{U^h\cdot\nabla\Omega^h}_{L^2}\leq C_1 \e^{-2\nu t}\eps^2.
\end{equation}
Next we compare this with the solution of the Navier-Stokes equations starting at $\Omega_\eps$: let $\Omega^\nu$ solve
\begin{equation}\label{eq:NS_eps}
 \de_t\Omega^\nu+U^\nu\cdot\nabla\Omega^\nu=\nu\Delta\Omega^\nu,\quad \Omega^\nu(0)=\Omega_\eps.
\end{equation}
The difference $\Omega^\nu-\Omega^h$ then solves
\begin{equation}
 \de_t(\Omega^\nu-\Omega^h)+U^\nu\cdot\nabla\Omega^\nu=\nu\Delta(\Omega^\nu-\Omega^h),
\end{equation}
and we note that $\int_{\T^2}\Omega^\nu-\Omega^h=0$. A standard energy estimate then gives
\begin{equation}
 \frac{1}{2}\ddt\norm{\Omega^\nu-\Omega^h}_{L^2}^2+\nu\norm{\nabla(\Omega^\nu-\Omega^h)}_{L^2}^2=\abs{\ip{U^\nu\cdot\nabla\Omega^\nu,\Omega^\nu-\Omega^h}_{L^2}}.
\end{equation}
Since
\begin{equation}
 \ip{U^\nu\cdot\nabla\Omega^\nu,\Omega^\nu-\Omega^h}=\ip{(U^\nu-U^h)\cdot\nabla\Omega^h,\Omega^\nu-\Omega^h}+\ip{U^h\cdot\nabla\Omega^h,\Omega^\nu-\Omega^h}
\end{equation}
and since $\Omega^\nu-\Omega^h$ is mean-free we have
\begin{equation}
 \abs{\ip{U^\nu\cdot\nabla\Omega^\nu,\Omega^\nu-\Omega^h}_{L^2}}\leq \norm{\nabla\Omega^h}_{L^\infty}\norm{\Omega^\nu-\Omega^h}_{L^2}^2+C\e^{-2\nu t}\eps^2\norm{\Omega^\nu-\Omega^h}_{L^2}.
\end{equation}
By Gr\"onwall's Lemma it thus follows that
\begin{equation}
 \norm{\Omega^\nu(t)-\Omega^h(t)}_{L^2}\leq C_1 t\eps^2\exp\left(\int_0^t \norm{\nabla\Omega^h(s)}_{L^\infty}\dd s\right)\leq C_1 t\eps^2\exp\left(t\norm{\nabla\Omega_\eps}_{L^\infty}\right).
\end{equation}
Since there exists $C_2>0$ such that for all $\eps>0$ one has $\norm{\nabla\Omega_\eps}\leq C_2$, we may choose $\eps_0=\frac{\nu \e^{-\frac{C_2}{\nu}}}{100 C_1}$, which implies that for $t\in[0,\frac{1}{\nu}]$ we have
\begin{equation}
 \norm{\Omega^\nu(t)-\Omega^h(t)}_{L^2}\leq \frac{\eps}{100}.
\end{equation}
This shows that for $t\in[0,\frac{1}{\nu}]$ we have
\begin{equation}\label{eq:NS_expanded}
\begin{aligned}
 \Omega^\nu&=-\alpha(t)\e^{-\nu t}\cos(y)-\eps\beta(t)\e^{-\nu t}\cos(x)-9c_0\eps\gamma(t)\e^{-9\nu t}\cos(3y)+25c_1\eps\delta(t)\e^{-25\nu t}\cos(5y)\\
 &\qquad+\eps^2 H(t,x,y),
\end{aligned} 
\end{equation}
with a remainder $H\in C^1_tH^2_{x,y}([0,\frac{1}{\nu}]\times\T^2)$ and for differentiable maps $\alpha:[0,\frac{1}{\nu}]\to[1-\frac{\eps}{100},1+\frac{\eps}{100}]$ and $\beta,\gamma,\delta:[0,\frac{1}{\nu}]\to [\frac{99}{100},\frac{101}{100}]$. Expanding now the equation \eqref{eq:NS_eps} for $\Omega^\nu$ in powers of $\eps$ up to first order shows that in fact $\alpha'=\beta'=\gamma'=\delta'=0$, i.e.\ $\alpha=\beta=\gamma=\delta=1$.

Expanding at order $\eps^2$, we see that for some $K_i\neq 0$, $1\leq i\leq 2$, we have
\begin{equation}\label{eq:eps2_ver1}
 \PP_\D \left[\frac{\dd^2}{\dd\eps^2}|_{\eps=0}(U^\nu\cdot\nabla\Omega^\nu)\right]=K_1\e^{-10\nu t}\sin(x)\sin(3y)+K_2 \e^{-26\nu t}\sin(x)\sin(5y)+\PP_\D(\sin(y)\cdot H).
\end{equation}
Note now that in the last term, the only way to create a mode $\sin(x)\sin(3y)$ or $\sin(x)\sin(5y)$ is by having $\cos(y)$ (the zero order part of $\Omega^\nu$) interact with an element of $\PP_\D H$.
Assuming that enhanced dissipation happens, however, we have that $\norm{\PP_\D H(t)}_{L^2}\ll\eps$ for $t\in[\frac{1}{2\nu},\frac{1}{\nu}]$, so that on this time interval in fact we can conclude from \eqref{eq:eps2_ver1} that
\begin{equation}\label{eq:eps2_ver2}
\begin{aligned}
 &\ip{\PP_\D \left[\frac{\dd^2}{\dd\eps^2}|_{\eps=0}(U^\nu\cdot\nabla\Omega^\nu)\right],\sin(x)\sin(3y)}=\pi^2 K_1\e^{-10\nu t},\\
 &\ip{\PP_\D \left[\frac{\dd^2}{\dd\eps^2}|_{\eps=0}(U^\nu\cdot\nabla\Omega^\nu)\right],\sin(x)\sin(5y)}=\pi^2 K_2 \e^{-26\nu t}.
\end{aligned}
\end{equation}
However, this contradicts the assumption of enhanced dissipation of $\PP_\D\Omega^\nu$, and thus concludes the proof of Theorem \ref{thm:main_Bar}.

\section{Rigidity near Poiseuille flow}\label{sec:Pois}
In this section we prove Theorem \ref{thm:main_Pois}, which asserts that the only (sufficiently regular) traveling wave solutions near the Poiseuille flow are in fact shears. The idea is as follows: First we split a given traveling wave into a shear and non-shear part. By assumption, the shear part is close to the Poiseuille flow, and we show in Proposition \ref{prop:lowerBound} that the linearized operator near it satisfies a strong coercivity estimate. This can then be employed to show that if the regularity is sufficiently high ($H^{5+}$), then the non-shear part has to vanish.

The following result gives the announced strong coercivity estimate for the linearized operator
\begin{equation}\label{eq:linearized}
 \Lin_V:=V(y)\de_x - V''(y)\Delta^{-1}\de_x,
\end{equation} 
around a shear flow $(V(y),0)$ near Poiseuille flow $U_P=(y^2,0)$.
\begin{proposition}\label{prop:lowerBound}
There exist constants $c_1, \eps_1>0$ with the following property:  let $\psi \in H^3$ be such that
\begin{equation}\label{eq:x-ave}
 \int_\T \psi(x,y)\dd x=0.
\end{equation}
If  $\eps\in(0,\eps_1)$ and $V\in W^{5,\infty}([-1,1])$ is such that 
\begin{equation}\label{eq:CloseToP}
 \norm{V'-2y}_{W^{4,\infty}}<\eps,
\end{equation}
then
\begin{equation}\label{eq:P2}
 \norm{\Lin_V\omega}_{\dot{H}^1}\geq c_1\frac{\norm{\de_{x}\nabla\psi}_{L^2}^2+\norm{V'\de_{x}\omega}_{L^2}^2}{\norm{\omega}_{\dot{H}^1}}.
\end{equation}
\end{proposition}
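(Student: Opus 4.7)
The plan is to test $\mathcal{L}_V\omega = V\partial_x\omega - V''\partial_x\psi$ against the simple multiplier $\partial_x\omega$ in $L^2$. The miraculous feature of the Poiseuille profile, namely $V'^2 = 4V$, makes this single choice capture \emph{both} positive quantities on the right-hand side of \eqref{eq:P2} at once; the perturbed case then follows by smallness of $\eps$.

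First I would compute
\begin{equation*}
\langle \mathcal{L}_V\omega, \partial_x\omega\rangle_{L^2}
= \int V(\partial_x\omega)^2 \,\dd x\,\dd y - \int V''\,\partial_x\psi\,\partial_x\omega\,\dd x\,\dd y.
\end{equation*}
Using $\omega=\Delta\psi$ and integrating by parts successively in $x$ (periodicity) and $y$ (using $\partial_x\psi|_{y=\pm1}=0$, which also forces $\partial_x^2\psi|_{y=\pm1}=0$ and discards all boundary terms in the step), the cross term rearranges into
\begin{equation*}
-\int V''\partial_x\psi\,\partial_x\omega
= \int V''\bigl[(\partial_x^2\psi)^2+(\partial_{xy}\psi)^2\bigr] + \int V'''\,\partial_x\psi\,\partial_{xy}\psi.
\end{equation*}
For the exact Poiseuille case $V(y)=y^2$ one has $V''\equiv 2$, $V'''\equiv 0$, and $V'^{\,2}=4V$, yielding the clean identity
\begin{equation*}
\langle \mathcal{L}_V\omega, \partial_x\omega\rangle
= \tfrac14\|V'\partial_x\omega\|_{L^2}^2 + 2\|\partial_x\nabla\psi\|_{L^2}^2.
\end{equation*}
For $V$ merely close to Poiseuille as in \eqref{eq:CloseToP}, the same identity persists modulo $O(\eps)\|\partial_x\nabla\psi\|_{L^2}^2$ corrections (controlling the $V'''$ term via Poincar\'e in $x$ on $\partial_x\psi$) and an $O(\eps)\|\partial_x\omega\|_{L^2}^2$ remainder coming from $V'^{\,2}-4V=O(\eps)$.

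Next I would bound the left-hand side from above by
\begin{equation*}
|\langle \mathcal L_V\omega,\partial_x\omega\rangle|
\le \|\mathcal{L}_V\omega\|_{L^2}\|\partial_x\omega\|_{L^2}
\le C\|\mathcal{L}_V\omega\|_{\dot H^1}\|\omega\|_{\dot H^1},
\end{equation*}
where the second step uses Poincar\'e in $x$ on $\mathcal{L}_V\omega=\partial_x(V\omega-V''\psi)$, which automatically has zero $x$-average, together with $\|\partial_x\omega\|_{L^2}\le\|\omega\|_{\dot H^1}$. Combining with Step 1 gives, for $\eps$ small,
\begin{equation*}
\tfrac18\|V'\partial_x\omega\|_{L^2}^2 + \|\partial_x\nabla\psi\|_{L^2}^2
\le C\|\mathcal{L}_V\omega\|_{\dot H^1}\|\omega\|_{\dot H^1} + C\eps\|\partial_x\omega\|_{L^2}^2,
\end{equation*}
which, after absorbing the last remainder and dividing by $\|\omega\|_{\dot H^1}$, is the desired \eqref{eq:P2}.

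The main obstacle I anticipate is the treatment of the remainder $C\eps\|\partial_x\omega\|_{L^2}^2$, which cannot be pointwise absorbed into $\|V'\partial_x\omega\|_{L^2}^2$ because $V'$ degenerates at $y=0$. I would handle it by a cutoff at the critical scale $|y|\sim\eps^{1/2}$: outside this region $|V'|\gtrsim\eps^{1/2}$ so $\eps\int_{|y|\ge\eps^{1/2}}(\partial_x\omega)^2\lesssim \|V'\partial_x\omega\|_{L^2}^2$ is absorbed; inside, a one-dimensional trace-Sobolev estimate in $y$ (using $\psi\in H^3$ so that $\partial_x\omega$ has a well-defined trace in a suitable weighted space) produces an extra factor $\eps^{1/2}$, leaving a contribution controllable by $\|\mathcal{L}_V\omega\|_{\dot H^1}\|\omega\|_{\dot H^1}$ via interpolation for $\eps_1$ small enough. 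A parallel technical matter is the careful bookkeeping of boundary contributions at $y=\pm1$ in the chain of integrations by parts; those terms involving any factor of $\partial_x^k\psi$ vanish by the impermeability condition, while the remaining ones (involving only $\omega$ or $\partial_y\psi$) have to be estimated using trace inequalities and the bound \eqref{eq:CloseToP}.
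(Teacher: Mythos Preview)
Your approach has a fundamental gap. The hypothesis \eqref{eq:CloseToP} constrains only $V'$, not $V$ itself, so $V$ is determined only up to an additive constant; in particular, your assertion that $(V')^2 - 4V = O(\eps)$ is false in general. For $V(y) = y^2 + C$ (which satisfies $V' = 2y$ exactly) one has $(V')^2 - 4V = -4C$, and $C$ is unconstrained --- indeed, in the application in Section~\ref{sec:Pois}, $V(y) = y^2 - c - \partial_y\int_\T\widetilde\psi\,\dd x$ where $c$ is the arbitrary wave speed. Thus the term $\int V(\partial_x\omega)^2$ in your identity can be arbitrarily negative, and the lower bound collapses. Nor can you simply normalize to $V(0)=0$: the conclusion involves $\norm{\Lin_V\omega}_{\dot H^1}$, which changes under $V\to V+C$ and can become \emph{smaller} after the shift (by cancellation with $C\partial_x\omega$), so the normalized case does not imply the general one.

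The paper circumvents this by testing against the \emph{weighted} multipliers $V'\partial_y\omega$ and $V'\partial_x\omega$ rather than $\partial_x\omega$: the quantity $A_V(\omega) := \langle V'\partial_x\Lin_V\omega, \partial_y\omega\rangle + \langle V'\partial_y\Lin_V\omega, \partial_x\omega\rangle$ depends only on derivatives of $V$ (hence is invariant under $V\to V+C$) and produces $\norm{V'\partial_x\omega}_{L^2}^2$ directly as the leading positive term. This weighting also dissolves your second difficulty: the error terms involve only $\norm{\psi}_{L^2}$ and $\norm{\partial_y\psi}_{L^2}$ (never the unweighted $\norm{\partial_x\omega}_{L^2}$), and these are absorbed via the elementary auxiliary inequality $k^2\norm{\psi}_{L^2}^2 \le 4\norm{\partial_y\psi}_{L^2}^2 + \norm{V'\omega}_{L^2}^2$, with no need for your cutoff/trace argument near $y=0$ --- which in any case would require pointwise-in-$y$ control of $\int_\T(\partial_x\omega)^2\,\dd x$ that is not available at the stated regularity $\psi\in H^3$.
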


\begin{proof}
Since the linear operator $\Lin_V$ decouples in the $x$-frequency, we expand $\omega$ (and $\psi$) as a Fourier series in the $x$ variable, namely
\begin{equation}
\omega(t,x,y)=\sum_{\ell\in \mathbb{Z}} a_\ell(t,y)\e^{i \ell x}, \qquad a_\ell(t,y)=\frac{1}{2\pi}\int_\T\omega(t,x,y)\e^{-i \ell x}\dd x.
\end{equation} 
For $k\in\bbN_0$ we set
\begin{equation}
 \omega_k(t,x,y):=\sum_{|\ell|=k} a_k(t,y)\e^{i\ell x}.
\end{equation}
Thanks to \eqref{eq:x-ave}, we may express $\omega=\sum_{k\in\bbN}\omega_k(t,x,y)$ as a sum of \emph{real-valued} functions $\omega_k$ that are localized in $x$-frequency on a single band $\pm k$, $k\in\bbN$.

Define now
\begin{equation}\label{eq:defAU}
A_{V}(\omega):=\l V'\de_x \Lin_V\omega,\de_y \omega\r+\l V'\de_y\Lin_V\omega, \de_x\omega\r.
\end{equation}
Recalling that $\psi$ satisfies homogeneous Dirichlet boundary conditions in $y$, a direct computation shows that 
\begin{equation}
\begin{aligned}
A_{V}(\omega)
&=k^2\left[\| V'\omega\|^2_{L^2}+\l V''V'\psi, \de_y \omega\r-\l V''V'\de_{y}\psi,\omega\r-\l V'''V'\psi,\omega\r\right]\\
&=k^2\left[\| V'\omega\|^2_{L^2}+\l V''V'\psi, \de_y \Delta\psi\r-\l V''V'\de_{y}\psi,\Delta\psi\r-\l V'''V'\psi,\Delta\psi\r\right]\\
&=k^2\big[\| V'\omega\|^2_{L^2}-\l (V''V')'\psi, \de_{yy}\psi\r+\l (V''V')'\de_{y}\psi,\de_y\psi\r-\l V'''V'\psi,\Delta\psi\r\big]\\
&=k^2\big[\| V'\omega\|^2_{L^2}+\l (V''V')''\psi, \de_{y}\psi\r+2\l (V''V')'\de_y\psi, \de_{y}\psi\r+\l (V'''V')'\psi,\de_y\psi\r\\
&\qquad\qquad +\l V'''V'\nabla\psi,\nabla\psi\r\big]\\
&=k^2\Big[\| V'\omega\|^2_{L^2}+2\| V''\de_y\psi\|^2_{L^2}-\frac12\l \left[(V''V')'''+(V'''V')''-2k^2 V'''V'\right]\psi, \psi\r\\
&\qquad\qquad  +3\l V'''V'\de_y\psi, \de_{y}\psi\r\Big].
\end{aligned}
\end{equation}
Using \eqref{eq:CloseToP}, we have that in particular $V''\geq 1$. Therefore, since $k^2\geq 1$, for some $c_2\geq1$ we find that 
\begin{equation}\label{eq:comp1}
A_{V}(\omega)
\geq k^2\left[\| V'\omega\|^2_{L^2}+2\| \de_y\psi\|^2_{L^2} -c_2\eps k^2 \|\psi\|_{L^2}^2 -c_2\eps\| \de_{y}\psi\|_{L^2}^2\right].
\end{equation}
Now observe that
\begin{equation}
 \l \de_{xy}\psi, V'\de_x\omega \r=\l \de_{xy}\psi, V'\de_{xyy}\psi \r+\l \de_{xy}\psi, V'\de_{xxx}\psi \r=-\frac12\l V''\de_{xy}\psi,\de_{xy}\psi\r+\frac12\l V''\de_{xx}\psi,\de_{xx}\psi\r
\end{equation}
so that
\begin{equation}
\l V''\de_{xx}\psi,\de_{xx}\psi\r-\l V''\de_{xy}\psi,\de_{xy}\psi\r=2 \l \de_{xy}\psi, V'\de_x\omega \r\leq   \|\de_{xy}\psi\|^2+\|V'\de_{x}\omega\|^2.
\end{equation}
Since $1\leq V''\leq 3$, it follows that
\begin{equation}\label{eq:inequal}
\|\de_{xx}\psi \|^2_{L^2}\leq  4\|\de_{xy}\psi\|^2+\|V'\de_{x}\omega\|^2,
\end{equation}
or, equivalently,
\begin{equation}
k^2\|\psi \|^2_{L^2}\leq  4\|\de_{y}\psi\|^2+\|V'\omega\|^2,
\end{equation}
In particular, from \eqref{eq:comp1} we deduce that
\begin{equation}
A_{V}(\omega)\geq k^2\left[(1-c_2\eps)\| V'\omega\|^2_{L^2}+(2-5c_2\eps)\| \de_y\psi\|^2_{L^2} \right].
\end{equation}
Taking $\eps<\eps_1\leq 1/(5c_2)$ implies the lower bound
\begin{equation}\label{eq:lower}
A_{V}(\omega)\geq \frac{1}{2}\left[\| V'\de_x\omega\|^2_{L^2}+\| \de_{xy}\psi\|^2_{L^2} \right].
\end{equation}
A further use of \eqref{eq:inequal} then gives
\begin{equation}\label{eq:lower2}
A_{V}(\omega)\geq \frac{1}{16}\left[\| V'\de_x\omega\|^2_{L^2}+\| \de_{x}\nabla\psi\|^2_{L^2} \right].
\end{equation}
On the other hand, from the definition of $A_V(\omega)$ in \eqref{eq:defAU}, we have the upper bound
\begin{equation}\label{eq:upper}
A_{V}(\omega)\lesssim \| \de_x\Lin_V\omega\|_{L^2} \|\de_{y} \omega\|_{L^2}+ \| \de_y\Lin_V\omega\|_{L^2} \|\de_{x} \omega\|_{L^2}\lesssim
\| \Lin_V\omega\|_{\dot{H}^1} \| \omega\|_{\dot{H}^1}.
\end{equation}
Putting together \eqref{eq:lower2} and \eqref{eq:upper}, we obtain \eqref{eq:P2}  and we conclude the proof of the proposition. 
\end{proof}

\subsection{Proof of Theorem \ref{thm:main_Pois}}
Let us now consider a general traveling wave solution to the $2d$ Euler equations. Such a solution is necessarily of the form
\begin{equation}\label{eq:travEul1}
U(x-ct,y)=
\begin{pmatrix}
U_1(x-ct,y)\\
U_2(x-ct,y)
\end{pmatrix},
\end{equation}
for some $c\in \R$, and satisfies
\begin{equation}\label{eq:travEul2}
(U_1-c)\de_x\Omega+U_2\de_y\Omega=0,\qquad U=\nabla^\perp \Psi, \qquad \Delta \Psi =\Omega.
\end{equation}
We consider now its deviation from the Poiseuille flow, defining $\widetilde\psi$ as 
\begin{equation}\label{eq:defint}
\widetilde{\psi}(x,y):= \Psi(x,y)-\Psi_P(y),\qquad \psi(x,y):= \widetilde{\psi}(x,y)-\int_\T \widetilde{\psi}(x,y)\dd x,
\end{equation}
and notice that
\begin{equation}\label{eq:psimeanfree}
\int_\T \psi(x,y)\dd x=0.
\end{equation}
Accordingly, we set
\begin{equation}
u=\nabla^\perp \psi, \qquad \omega=\nabla^\perp\cdot u, \qquad \widetilde{u}=\nabla^\perp \widetilde{\psi}, \qquad \widetilde{\omega}=\nabla^\perp\cdot \widetilde{u}.
\end{equation}
From this we consider the shear part $(V(y),0)$, where
\begin{equation}
 V(y):=y^2-c-\de_y\int_\T \widetilde{\psi}(x,y)\dd x,\qquad  V'(y)=2y-\int_\T \widetilde{\omega}(x,y)\dd x.
\end{equation}
In light of the smallness assumption \eqref{eq:poisClose}, we obtain in particular that
\begin{equation}\label{eq:omegasmall}
\| \omega\|_{\dot{H}^5}\leq2\eps_0.
\end{equation}
From the equation \eqref{eq:travEul2} for the traveling wave and the definition of the linearized operator
$\Lin_V$ in \eqref{eq:linearized}, it follows that 
\begin{equation}
\Lin_V\omega=-u\cdot\nabla \omega, \qquad \textnormal{where }\Lin_V\omega=V\de_x\omega - V''\de_x\psi.
\end{equation}
Moreover, by virtue of the assumption \eqref{eq:poisClose} of proximity of the traveling wave to the Poiseuille flow, $V$ satisfies \eqref{eq:CloseToP} for $\eps_0\leq \eps_1$ as given by Proposition \ref{prop:lowerBound}. Hence Proposition \ref{prop:lowerBound} implies that
\begin{equation}
\|\de_{x}\psi\|_{\dot{H}^1}^2\leq \frac{1}{c_1} \|\omega\|_{\dot{H}^1}\|\Lin_V\omega\|_{\dot{H}^1}=\frac{1}{c_1} \|\omega\|_{\dot{H}^1}\|u\cdot \nabla \omega\|_{\dot{H}^1}.
\end{equation}
By interpolation, standard estimates and  \eqref{eq:psimeanfree} we have
\begin{align}
\|u\cdot \nabla \omega\|_{\dot{H}^1}
&\lesssim \|\nabla u\|_{L^\infty} \| \omega\|_{\dot{H}^1}+\| u\|_{L^\infty}  \|\omega\|_{\dot{H}^2}\notag\\
&\lesssim \|\nabla u\|_{L^2}^{1/2} \|\nabla u\|_{\dot{H}^2}^{1/2}  \| \omega\|_{\dot{H}^1}+\| u\|_{L^2}^{1/2} \| u\|_{\dot{H}^2}^{1/2}  \|\omega\|_{\dot{H}^2}\notag\\
%&\lesssim \|\Delta\psi \|_{L^2}^{1/2} \|\Delta\psi\|_{\dot{H}^2}^{1/2}  \| \Delta\psi\|_{\dot{H}^1}+\| \nabla \psi\|_{L^2}^{1/2} \| \nabla \psi \|_{\dot{H}^2}^{1/2}  \|\Delta\psi\|_{\dot{H}^2}\notag\\
%&\lesssim \|\psi \|_{\dot{H}^2}^{1/2} \|\psi\|_{\dot{H}^4}^{1/2}  \| \psi\|_{\dot{H}^3}+\|  \psi\|_{\dot{H}^1}^{1/2} \|  \psi \|_{\dot{H}^3}^{1/2}  \|\psi\|_{\dot{H}^4}\notag\\
%&\lesssim \|\psi \|_{\dot{H}^1}^{1/3}  \|\psi \|_{\dot{H}^4}^{1/6} \|\psi\|_{\dot{H}^4}^{1/2}  \| \psi\|^{1/3}_{\dot{H}^1} \| \psi\|^{2/3}_{\dot{H}^4}+\|  \psi\|_{\dot{H}^1}^{1/2} \|  \psi \|_{\dot{H}^1}^{1/6} \|  \psi \|_{\dot{H}^4}^{1/3}  \|\psi\|_{\dot{H}^4}\notag\\
&\lesssim \|\psi \|_{\dot{H}^1}^{2/3}  \|\psi \|_{\dot{H}^4}^{4/3}+\|  \psi\|_{\dot{H}^1}^{2/3} \|  \psi \|_{\dot{H}^4}^{4/3} \notag\\
&\lesssim \| \de_x \psi\|_{\dot{H}^1}^{2/3} \|  \psi \|_{\dot{H}^4}^{4/3}.
\end{align}
Therefore  it follows that 
\begin{align}\label{eq:1stbd}
\|\de_{x}\psi\|_{\dot{H}^1}^2\lesssim  \|\omega\|_{\dot{H}^1} \| \de_x \psi\|_{\dot{H}^1}^{2/3} \|  \psi \|_{\dot{H}^4}^{4/3}
\lesssim    \|\psi\|_{\dot{H}^3} \| \de_x \psi\|_{\dot{H}^1}^{2/3} \|  \psi \|_{\dot{H}^4}^{4/3}
\lesssim    \| \de_x \psi\|_{\dot{H}^1} \|  \psi \|_{\dot{H}^4}^{2}.
\end{align}
Finally, we interpolate once more and use that $\psi$ has zero $x$-average to deduce that
\begin{align}
\|\psi\|_{\dot{H}^4}\lesssim\|\psi\|_{\dot{H}^1}^{1/2}\|\psi\|_{\dot{H}^7}^{1/2}\lesssim\|\de_x\psi\|_{\dot{H}^1}^{1/2}\|\omega\|_{\dot{H}^5}^{1/2}.
\end{align}
Combined with \eqref{eq:1stbd}, this shows that there exists a constant $c_3\geq 1$ such that
\begin{equation}
\|\de_{x}\psi\|_{\dot{H}^1}\leq  c_3 \|\de_{x}\psi\|_{\dot{H}^1}\|\omega\|_{\dot{H}^{5}}.
\end{equation}
In view of \eqref{eq:omegasmall}, if we choose $\eps_0=\min\{1/(2c_3),\eps_1\}$, 
the only way the above inequality is satisfied is if $\de_x \psi\equiv 0$, that is,
$\psi\equiv 0$. In this case, from the relations \eqref{eq:defint} we obtain that $\Psi$ is only a function of $y$, and therefore the associated velocity
is a pure shear. The proof of Theorem \ref{thm:main_Pois} is over.

\subsection*{Acknowledgments}
The authors thank Theo Drivas and Toan T.\ Nguyen for inspiring conversations.

M.\ Coti Zelati acknowledges funding from the Royal Society through a University Research Fellowship (URF\textbackslash R1\textbackslash 191492). T.\ M.\ Elgindi acknowledges funding from NSF DMS-1817134 and NSF DMS-1945669.

\bibliographystyle{abbrv}
\bibliography{CZEW_StationaryStates}

\begin{thebibliography}{10}

\bibitem{BCS19}
M.~Beck, E.~Cooper, and K.~Spiliopoulos.
\newblock Selection of quasi-stationary states in the {N}avier-{S}tokes
  equation on the torus.
\newblock {\em Nonlinearity}, 32(1):209--237, 2019.

\bibitem{BW13}
M.~Beck and C.~E. Wayne.
\newblock Metastability and rapid convergence to quasi-stationary bar states
  for the two-dimensional {N}avier-{S}tokes equations.
\newblock {\em Proc. Roy. Soc. Edinburgh Sect. A}, 143(5):905--927, 2013.

\bibitem{BCZ17}
J.~Bedrossian and M.~Coti~Zelati.
\newblock Enhanced dissipation, hypoellipticity, and anomalous small noise
  inviscid limits in shear flows.
\newblock {\em Arch. Ration. Mech. Anal.}, 224(3):1161--1204, 2017.

\bibitem{BGM19}
J.~Bedrossian, P.~Germain, and N.~Masmoudi.
\newblock Stability of the {C}ouette flow at high {R}eynolds numbers in two
  dimensions and three dimensions.
\newblock {\em Bull. Amer. Math. Soc. (N.S.)}, 56(3):373--414, 2019.

\bibitem{BM15}
J.~Bedrossian and N.~Masmoudi.
\newblock Inviscid damping and the asymptotic stability of planar shear flows
  in the 2{D} {E}uler equations.
\newblock {\em Publ. Math. Inst. Hautes \'{E}tudes Sci.}, 122:195--300, 2015.

\bibitem{BMV16}
J.~Bedrossian, N.~Masmoudi, and V.~Vicol.
\newblock Enhanced dissipation and inviscid damping in the inviscid limit of
  the {N}avier-{S}tokes equations near the two dimensional {C}ouette flow.
\newblock {\em Arch. Ration. Mech. Anal.}, 219(3):1087--1159, 2016.

\bibitem{BVW18}
J.~Bedrossian, V.~Vicol, and F.~Wang.
\newblock The {S}obolev stability threshold for 2{D} shear flows near
  {C}ouette.
\newblock {\em J. Nonlinear Sci.}, 28(6):2051--2075, 2018.

\bibitem{BeJoSc_PDE}
L.~Bers, F.~John, and M.~Schechter.
\newblock {\em Partial differential equations}.
\newblock American Mathematical Society, Providence, R.I., 1979.

\bibitem{CLWZ18}
Q.~Chen, T.~Li, D.~Wei, and Z.~Zhang.
\newblock Transition threshold for the 2-{D} {C}ouette flow in a finite
  channel.
\newblock {\em Arch. Ration. Mech. Anal.}, 238(1):125--183, 2020.

\bibitem{CS12}
A.~Choffrut and V.~\v{S}ver\'{a}k.
\newblock Local structure of the set of steady-state solutions to the 2{D}
  incompressible {E}uler equations.
\newblock {\em Geom. Funct. Anal.}, 22(1):136--201, 2012.

\bibitem{CDG20}
P.~{Constantin}, T.~D. {Drivas}, and D.~{Ginsberg}.
\newblock {Flexibility and rigidity in steady fluid motion}.
\newblock {\em arXiv e-prints}, July 2020.

\bibitem{CKRZ08}
P.~Constantin, A.~Kiselev, L.~Ryzhik, and A.~Zlato\v{s}.
\newblock Diffusion and mixing in fluid flow.
\newblock {\em Ann. of Math. (2)}, 168(2):643--674, 2008.

\bibitem{CZDE18}
M.~{Coti Zelati}, M.~{Delgadino}, and T.~{Elgindi}.
\newblock On the relation between enhanced dissipation time-scales and mixing
  rates.
\newblock {\em Comm. Pure Appl. Math.}, 73(6):1205--1244, 2020.

\bibitem{CZEW19}
M.~{Coti Zelati}, T.~M. {Elgindi}, and K.~{Widmayer}.
\newblock {Enhanced dissipation in the Navier-Stokes equations near the
  Poiseuille flow}.
\newblock {\em to appear in Comm. Math. Phys., arXiv e-prints}, Jan. 2019.

\bibitem{DM18}
Y.~{Deng} and N.~{Masmoudi}.
\newblock {Long time instability of the Couette flow in low Gevrey spaces}.
\newblock {\em arXiv e-prints}, Mar. 2018.

\bibitem{FH98}
S.~Friedlander and L.~Howard.
\newblock Instability in parallel flows revisited.
\newblock {\em Stud. Appl. Math.}, 101(1):1--21, 1998.

\bibitem{FSV97}
S.~Friedlander, W.~Strauss, and M.~Vishik.
\newblock Nonlinear instability in an ideal fluid.
\newblock {\em Ann. Inst. H. Poincar\'{e} Anal. Non Lin\'{e}aire},
  14(2):187--209, 1997.

\bibitem{Gallay18}
T.~Gallay.
\newblock Enhanced dissipation and axisymmetrization of two-dimensional viscous
  vortices.
\newblock {\em Arch. Ration. Mech. Anal.}, 230(3):939--975, 2018.

\bibitem{GNRS20}
E.~Grenier, T.~T. Nguyen, F.~Rousset, and A.~Soffer.
\newblock Linear inviscid damping and enhanced viscous dissipation of shear
  flows by using the conjugate operator method.
\newblock {\em J. Funct. Anal.}, 278(3):108339, 27, 2020.

\bibitem{HN17}
F.~Hamel and N.~Nadirashvili.
\newblock Shear flows of an ideal fluid and elliptic equations in unbounded
  domains.
\newblock {\em Comm. Pure Appl. Math.}, 70(3):590--608, 2017.

\bibitem{HN19}
F.~Hamel and N.~Nadirashvili.
\newblock A {L}iouville theorem for the {E}uler equations in the plane.
\newblock {\em Arch. Ration. Mech. Anal.}, 233(2):599--642, 2019.

\bibitem{SMM19}
S.~Ibrahim, Y.~Maekawa, and N.~Masmoudi.
\newblock On pseudospectral bound for non-selfadjoint operators and its
  application to stability of {K}olmogorov flows.
\newblock {\em Ann. PDE}, 5(2):Paper No. 14, 84, 2019.

\bibitem{IJ19}
A.~D. Ionescu and H.~Jia.
\newblock Inviscid damping near the {C}ouette flow in a channel.
\newblock {\em Comm. Math. Phys.}, 374(3):2015--2096, 2020.

\bibitem{IJ20}
A.~D. {Ionescu} and H.~{Jia}.
\newblock {Nonlinear inviscid damping near monotonic shear flows}.
\newblock {\em arXiv e-prints}, Jan. 2020.

\bibitem{Kelvin87}
L.~Kelvin.
\newblock Stability of fluid motion: rectilinear motion of viscous fluid
  between two parallel plates.
\newblock {\em Phil. Mag.}, 24(5):188--196, 1887.

\bibitem{LWZ17}
T.~{Li}, D.~{Wei}, and Z.~{Zhang}.
\newblock {Pseudospectral and spectral bounds for the Oseen vortices operator}.
\newblock {\em arXiv e-prints}, Jan. 2017.

\bibitem{LM19}
Z.~Lin and M.~Xu.
\newblock Metastability of {K}olmogorov flows and inviscid damping of shear
  flows.
\newblock {\em Arch. Ration. Mech. Anal.}, 231(3):1811--1852, 2019.

\bibitem{LZ11}
Z.~Lin and C.~Zeng.
\newblock Inviscid dynamical structures near {C}ouette flow.
\newblock {\em Arch. Ration. Mech. Anal.}, 200(3):1075--1097, 2011.

\bibitem{MZcrit19}
N.~{Masmoudi} and W.~{Zhao}.
\newblock {Enhanced dissipation for the 2D Couette flow in critical space}.
\newblock {\em arXiv e-prints}, Aug. 2019.

\bibitem{MZco19}
N.~{Masmoudi} and W.~{Zhao}.
\newblock {Stability threshold of the 2D Couette flow in Sobolev spaces}.
\newblock {\em arXiv e-prints}, Aug. 2019.

\bibitem{MZ20}
N.~{Masmoudi} and W.~{Zhao}.
\newblock {Nonlinear inviscid damping for a class of monotone shear flows in
  finite channel}.
\newblock {\em arXiv e-prints}, Jan. 2020.

\bibitem{MS61}
L.~D. Me\v{s}alkin and J.~G. Sinai.
\newblock Investigation of the stability of a stationary solution of a system
  of equations for the plane movement of an incompressible viscous liquid.
\newblock {\em J. Appl. Math. Mech.}, 25:1700--1705, 1961.

\bibitem{Orr07}
W.~Orr.
\newblock The stability or instability of steady motions of a perfect liquid
  and of a viscous liquid, {Part I}: a perfect liquid.
\newblock {\em Proc. Royal Irish Acad. Sec. A: Math. Phys. Sci.}, 27:9--68,
  1907.

\bibitem{Reynolds83}
O.~Reynolds.
\newblock An experimental investigation of the circumstances which determine
  whether the motion of water shall be direct or sinuous, and of the law of
  resistance in parallel channels.
\newblock {\em Proc. R. Soc. Lond.}, 174:935--982, 1883.

\bibitem{WZ19}
D.~Wei and Z.~Zhang.
\newblock Enhanced dissipation for the {K}olmogorov flow via the hypocoercivity
  method.
\newblock {\em Sci. China Math.}, 62(6):1219--1232, 2019.

\bibitem{WZZ20}
D.~Wei, Z.~Zhang, and W.~Zhao.
\newblock Linear inviscid damping and enhanced dissipation for the {K}olmogorov
  flow.
\newblock {\em Adv. Math.}, 362:106963, 103, 2020.

\bibitem{Zillinger16}
C.~Zillinger.
\newblock Linear inviscid damping for monotone shear flows in a finite periodic
  channel, boundary effects, blow-up and critical {S}obolev regularity.
\newblock {\em Arch. Ration. Mech. Anal.}, 221(3):1449--1509, 2016.

\end{thebibliography}

\end{document}